\let\cref = \Cref
\crefname{subsection}{Section}{Sections}
\crefname{section}{Section}{Sections}
\newtheorem{theorem}{Theorem}[section]
\newtheorem{lemma}[theorem]{Lemma}
\newtheorem{proposition}[theorem]{Proposition}
\newtheorem{corollary}[theorem]{Corollary}
\newtheorem{example}[theorem]{Example}
\newtheorem{remark}[theorem]{Remark}
\newtheorem{definition}[theorem]{Definition}
\renewcommand{\leq}{\leqslant} 
\renewcommand{\geq}{\geqslant} 
\newcommand{\ra}{\rangle}
\newcommand{\la}{\langle}
\newcommand{\ind}[1]{\mathds{1}_{#1}}
\newcommand{\eps}{\varepsilon}
\newcommand{\norm}[1]{\left\Vert#1\right\Vert}
\newcommand{\set}[1]{\left\{#1\right\}}
\newcommand{\ALS}{$\textup{ALS}^{++}$~}
 \let\gb=\beta \let\gc=\gamma  \let\gee=\varepsilon
     \let\gl=\lambda       
   \let\gr=\rho \let\gs=\sigma \let\gt=\tau 
\let\gC=\Gamma \let\gD=\Delta   
   \let\gP=\Pi      \let\gS=\Sigma
\newcommand{\cB}{\mathcal{B}}
\newcommand{\cD}{\mathcal{D}}
\newcommand{\cM}{\mathcal{M}}\newcommand{\cO}{\mathcal{O}}
\newcommand{\bs}[1]{\boldsymbol{#1}}
\newcommand{\sC}{\mathscr{C}}
\newcommand{\sR}{\mathscr{R}}
\DeclareMathOperator{\E}{\mathds{E}}
\DeclareMathOperator{\R}{\mathbb{R}}
\DeclareMathOperator{\pr}{\mathds{P}}
\DeclareMathOperator{\var}{Var}
\DeclareMathOperator*{\argmax}{argmax}
\DeclareMathOperator*{\argmin}{argmin}
\DeclareMathOperator{\sym}{sym}
\let\P = \bsP
\let\a = \bsa
\DeclareMathOperator{\rank}{rank}
\DeclareMathOperator{\prepnormeqn}{\textsc{PrepNormEqn}}
\DeclareMathOperator{\wstep}{\textsc{UpdateWeight}}
\DeclareMathOperator{\mstep}{\textsc{UpdateMean}}
\DeclareMathOperator{\solverow}{\textsc{SolveRow}}
\newcommand{\Diag}[1]{\textbf{Diag}\left(#1\right)}
\newcommand{\defil}[1]{\textit{#1}}  
\renewcommand{\v}[1]{\bs{#1}} 
\renewcommand{\t}[1]{\bs{\mathcal{#1}}} 
\newcommand{\m}[1]{\bs{#1}}
\newcommand{\tprod}[2]{{#1}^{\otimes #2}}  
\newcommand{\eprod}[2]{{#1}^{\ast #2}}
\newcommand{\floor}[1]{\left\lfloor #1 \right\rfloor}
\newcommand{\ceil}[1]{\left\lceil #1 \right\rceil}
\newcommand{\ip}[2]{\left\la #1,\, #2 \right\ra}
\newcommand{\pd}[2]{\frac{\partial #1}{\partial #2}}
\def\pom{\textit{pomegranate }}
\date{}
\title{Moment Estimation of Nonparametric Mixture Models Through Implicit Tensor Decomposition\thanks{Both authors were supported in part by start-up grants to JK provided by the College of Natural Sciences and Oden Institute at UT Austin.  We thank Jonathan Niles-Weed for helpful conversations.}}
\author{Yifan Zhang\thanks{Oden Institute, University of Texas at Austin, yf.zhang@utexas.edu}
\and Joe Kileel\thanks{Department of Mathematics and Oden Institute, University of Texas at Austin, jkileel@math.utexas.edu}}
\begin{document}
    \maketitle
    \begin{abstract}   
        We present an alternating least squares type numerical optimization scheme to estimate conditionally-independent mixture models in $\mathbb{R}^n$, without parameterizing the distributions. 
        Following the method of moments, we tackle an incomplete tensor decomposition problem to learn the mixing weights and componentwise means.
        Then we compute the cumulative distribution functions, higher moments and other statistics of the component distributions through linear solves.
        Crucially for computations in high dimensions, the steep costs associated with high-order tensors are evaded, via the development of efficient tensor-free operations.
        Numerical experiments demonstrate the competitive performance of the algorithm, and its applicability to many models and applications.
        Furthermore we provide theoretical analyses, establishing identifiability from low-order moments of the mixture and guaranteeing local linear convergence of the ALS algorithm.
    \end{abstract}

    \section{Introduction} \label{sec:intro}

Mixture models have been intensively studied for their strong expressive power \cite{lindsey1995modelling,bohning2003recent,mclachlan2004finite}. 
They are a common choice in data science problems when data is believed to come from distinct subgroups, or when simple distributions such as one Gaussian or one Poisson do not offer a good fit on their own.

In this paper, we develop new numerical tools for the nonparametric estimation of 
(finite) \textup{conditionally-independent mixture models} in \nolinebreak $\mathbb{R}^n$, also known as mixtures of product distributions: 
\begin{equation}\label{eq:cond-inp-mix}
    \mathcal{D} = \sum\nolimits_{j=1}^r w_j \cD_j~~\text{where}~~\cD_j = \bigotimes \nolimits_{i=1}^n \mathcal{D}_{ij}.
\end{equation}
Here $r$ is the number of components, and
each $\cD_j$ is a product of $n$ distributions $\mathcal{D}_{ij}$ ($i=1, \ldots, n$) on the real line.
No parametric assumptions are placed on $\mathcal{D}_{ij}$ except that their moments should exist.
Our new methods are based on tensor decomposition. 
Our approach is particularly well-suited for computations in high dimensions.

\subsection{Why This Model?} 
The nonparametric conditionally-independent mixture model \eqref{eq:cond-inp-mix} was brought to broader attention by Hall and Zhou \cite{hall2003nonparametric}.
Their motivation came from clinical applications, modeling heterogeneity of repeated test data from patient groups with different diseases.
Later the model was used to describe heterogeneity in econometrics by Kasahara and Shimotsu \cite{kasahara2009nonparametric}, who considered dynamic discrete choice models.  
By modeling a choice policy as a mixture of Markovian policies, they reduced their problem to an instance of \eqref{eq:cond-inp-mix}.
Heterogeneity analysis using \eqref{eq:cond-inp-mix} also appears in auction analysis \cite{hu2013identification,hu2012nonparametric} and incomplete information games \cite{xiao2018identification,aguirregabiria2019identification}; see the survey  \cite{chauveau2015semi}.  
In statistics, Bonhomme and coworkers \cite{bonhomme2016non} considered repeatedly measuring a real-valued mixture and reduced that situation to instance of \eqref{eq:cond-inp-mix}.
Extending their framework, conditionally-independent mixtures can model repeated measurements of $r$ deterministic objects described by $n$ features, when there is independent measurement noise for each feature.  This setting approximates the class averaging problem of cryo-electron microscopy \cite{bhamre2017mahalanobis}.
In computer science, Jain and Oh studied \eqref{eq:cond-inp-mix} when $\cD_{ij}$ are finite discrete distributions, motivated by connections to crowdsourcing, genetics, and recommendation systems.
Chen and Moitra \cite{chen2019beyond} studied \eqref{eq:cond-inp-mix} to learn stochastic decision trees, where each $\cD_{ij}$ are Bernoulli variables.

Conditional independence can also be viewed as a reduced model or regularization of the full model.
Here \eqref{eq:cond-inp-mix} is a universal approximator of densities on $\mathbb{R}^n$. 
But also, it reduces the number of parameters and prevents possible over-fitting. 
For instance, experiments in \cite{kargas2019learning} show that for many real datasets, 
diagonal Gaussian mixtures give a comparable or better classification accuracy compared to full Gaussian mixtures.

\subsection{Existing Methods}
Many approaches for estimating model \eqref{eq:cond-inp-mix} have been studied.
Hall and Zhou \cite{hall2003nonparametric} took a nonparametric maximum-likelihood-based approach to learn the component distributions and weights when $r=2$.
Subsequently in \cite{hall2005nonparametric}, Hall et al. provided another consistent density estimator for \eqref{eq:cond-inp-mix} when $r = 2$, with  errors bounded by $\mathcal{O}(p^{-1/2})$.
A semi-parametric expectation maximization framework has been developed in a series of works \cite{benaglia2009EM,levine2011maximum,chauveau2016nonparametric}, where $\cD_j$ is a superposition of parametrized kernel functions.
Bonhomme and coauthors \cite{bonhomme2016non} pursued a different approach that applies to repeated 1-D measurements. 
 They learned the map $\phi \mapsto \E_{X\sim \cD_j} [\phi(X)]$, instead of the distributions $\mathcal{D}_j$ directly.  

Tensor decomposition and method of moment (MoM) based algorithms for \eqref{eq:cond-inp-mix} have also received  attention.
Jain and Oh \cite{jain2014learning} as well as Chen and Moitra \cite{chen2019beyond} developed moment-based approaches to learn discrete mixtures.
\cite{anandkumar2012method} is another example of a MoM approach for \eqref{eq:cond-inp-mix} in the discrete case.
For continuous distributions and MoM, most attention was devoted to solving diagonal Gaussian mixtures \cite{hsu2013learning,guo2022learning,khouja2022tensor}.
For nonparametric estimation, the works \cite{kargas2019learning} and  \cite{zheng2020nonparametric} used tensor techniques to learn discretizations of $\mathcal{D}_j$.
In terms of computational advances, Sherman and Kolda \cite{sherman2020estimating} proposed implicit tensor decomposition to avoid expensive tensor formation when applying MoM.
Subsequently, Pereira and coauthors \cite{pereira2022tensor} developed an implicit MoM algorithm for diagonal and full Gaussian mixture models.

Regarding theory, identifiability has been investigated.
This includes identifiability of the number of components ($r$), mixing weights, and component distributions.
Allman and coauthors \cite{allman_identifiability_2009} proved that mixing weights and component distributions can be identified from the \emph{joint distribution}, assuming $r$ is known.
In the large sample limit, this implies identifiability from \emph{data}.
For learning $r$, recent works are  \cite{kasahara2014non, kwon2021estimation}.

\subsection{Our Contributions} \label{sec:contrib}

We learn the nonparametric mixture model \eqref{eq:cond-inp-mix} through the method of moments.
The number of components $r$ is assumed to be known, or already estimated by an existing method.
Given a dataset $\m V = \{\v v_1, \ldots, \v v_p \}$ of size $p$ in $\R^n$ sampled independently from $\mathcal{D}$,
we learn the weights $\v w = \set{w_1,\ldots,w_r}$ and a functional similar to the one in Bonhomme et al.'s paper \cite{bonhomme2016non}:
\begin{equation} \label{eq:fnal}
    g \mapsto \E_{X \sim \cD_j}[g(X)], \ \ \text{where}\ \ g(\v x) = (g_1(x_1),\ldots,g_n(x_n)),\footnote{The functional applies to functions $g : \mathbb{R}^n \rightarrow \mathbb{R}^n$ that are Cartesian products of  $g_i : \mathbb{R} \rightarrow \mathbb{R}$.}
\end{equation}  
for all $j \in [r]$.
We call the output of the functional a \textit{general mean}.
In particular, this includes the component distributions ($g_i(x_i) = \ind{x_i \leq t_i}$), component moments ($g_i(x_i) = x_i^{m_i}$), moment generating functions ($g_i(x_i) = e^{t_i x_i}$), etc.

We propose a novel two-step approach.
First ({step 1}), we learn the mixing weights and component means using \textup{tensor decomposition} and an alternating least squares (ALS) type algorithm (\cref{alg:basicALS} and \ref{alg:finalALS}).  
This exploits special algebraic structure in the conditional-independent model.
Once the weights and means are learned, ({step 2}) we make an important observation that for any $g$ in \eqref{eq:fnal}, the general means $\E_{X \sim \cD_j} [g(X)]$  ($j = 1,\ldots,r$) are computed 
by a linear solve (\cref{alg:gmom})

We pay great attention to the efficiency and \textup{scalability} of our MoM based algorithm, particularly for high dimensions.
Our MoM algorithm is \textit{implicit}  in the sense of \cite{sherman2020estimating}, meaning that no moment tensor is explicitly computed during the estimation.
For model \eqref{eq:cond-inp-mix} in $\R^n$ of sample size $p$,
this allows us to achieve an $\cO(npr + nr^3)$ computational complexity to update the means and weights in ALS in step 1, similar to the per-iteration cost of the EM algorithms.
The storage cost is only $\cO(n(r + p))$ -- the same order needed to store the data and computed means.
This is a great improvement compared to explicitly computing moment tensors, which requires $\cO(pn^d)$ computation and $\cO(n^d)$ storage for the $d$th moment.  The cost to solve for the general means (step 2) is $\cO(npr + nr^3)$ time.

Compared to other recent tensor-based methods \cite{zheng2020nonparametric,kargas2019learning}, it may seem indirect to learn the functional \eqref{eq:fnal} rather than learning the component distributions directly.
However our approach has major advantages for large $n$.
In \cite{kargas2019learning}, the authors need CP decompositions of 3-D histograms for all marginalizations of the data onto $3$ features.  
This scales cubically in $n$.  
Additionally, the needed resolution of the histograms can be hard to estimate a priori.
In \cite{zheng2020nonparametric}, the authors learn the densities by running kernel density estimation.  Then they compute a CP decomposition of an explicit $r^n$ tensor, which is exponential in $n$.
By contrast, our algorithm has much lower complexity. 
Moreover, the functional-based approach enables adaptive localization of the component distributions by estimating e.g. their means $\v a_j$ and standard deviations $\v \gs_j$ for $j = 1,\ldots, r$. 
We can then evaluate the cdf of each component at (say) $\v a_j \pm k \v \gs_j$ for prescribed values of $k$.
See a numerical illustration in \cref{sec:density}.

We also have significant theoretical contributions.
In \cref{thm:coupled}, we derive a \textup{novel system} of coupled CP decompositions, relating the moment tensors of $\mathcal{D}$ to the mixing weights and  moments of $\mathcal{D}_j$.
This has already seen interest in computational algebraic geometry \cite{alexandr2023moment}.
In \cref{thm:low_rank_unique} and \ref{thm:wMid}, we prove for the model \eqref{eq:cond-inp-mix} in $\mathbb{R}^n$ that if $d \geq 3$ and $r = \cO(n^{\floor{d/2}})$ then the mixing weights and means of $\mathcal{D}_j$ are \textup{identifiable} from the \textit{first $d$ moments of $\mathcal{D}$}.
In \cref{thm:gmeans_id}, we prove that the general means \eqref{eq:fnal} are identifiable from the \textit{weights and means} and \textit{certain expectations over $\mathcal{D}$}.
These guarantees are new, and more relevant for MoM than prior identifiability results which assumed access to the joint distribution  \eqref{eq:cond-inp-mix} \cite{allman_identifiability_2009}.
In \cref{thm:alsconv}, we prove a guarantee for our algorithm, namely local convergence of ALS (step 1).  

Our last contribution is extensive numerical experiments presented in \cref{sec:exp}.  
We test our procedures on various simulated estimation problems
as well as real data sets.  
The tests demonstrate the scalability of our methods to high dimensions $n$, and their competitiveness with leading alternatives.
The experiments also showcase the  applicability of the model \eqref{eq:cond-inp-mix} to different problems in data science.
A Python implementation of our algorithms is available at \url{https://github.com/yifan8/moment_estimation_incomplete_tensor_decomposition}.

\subsection{Notation} 
\label{sec:notation}

Vectors, matrices and tensors are denoted by lower-case, capital and calligraphic letters respectively, for example by $\v a, \m A, \t A$.
The 2-norm of a vector and the Frobenius norm of a matrix or tensor are denoted by $\| \cdot \|$.
The Euclidean inner product between vectors and the Frobenius inner product between matrices or tensors are denoted by $\langle \cdot \, , \cdot \rangle $.
Operators $\ast$ and $/$ denote entrywise multiplication (or exponentiation) and division respectively.
We write the tensor product as $\otimes$. 
We write $\mathbb{R}^n \otimes \ldots \otimes \mathbb{R}^n  = \mathbb{R}^{n^d}\!$.
The subspace of order $d$ symmetric tensors is denoted $S^{d}(\mathbb{R}^n)$.
For a matrix $\m A$, its $i$th column and $i$th row are  $\v a_i$ and $\v a^i$, respectively. 
The probability simplex is denoted as $\gD_{r-1}  = \{ \v x \in \R^r_{\geq 0}: \sum_{i} x_i = 1\}$.  
The all-ones vector or matrix is $\ind{}$.
An integer partition of $d \in \mathbb{N}$ is a tuple of non-increasing positive integers $\v \lambda = (\lambda_1, \lambda_2, \ldots )$ such that 
$\lambda_1 + \lambda_2 + \ldots = d$, denoted $\v \lambda \vdash d$. 
We write $\v \lambda = 1^{b_1} 2^{b_2} \ldots$ if $b_1$ elements are $1$, etc.
The length of $\v \gl$ is $\ell(\v \lambda) = b_1 + b_2 + \ldots$

    \section{Method of Moments and Identifiability}\label{sec:mom}

In this section we formulate the MoM framework for solving model \eqref{eq:cond-inp-mix}.  We derive moment equations that we will solve and present important theory on the uniqueness of the equations' solution.

\subsection{Setup}  Let $\mathcal{D}$ be a conditionally-independent mixture of $r$ distributions in $\mathbb{R}^n$  \eqref{eq:cond-inp-mix}.
We assume $r$ is known (or has been estimated).  For data we assume we are given a matrix of $p$ independent draws from $\mathcal{D}$:
  \begin{equation} \label{eq:data}
\m V \, = \, \left(\v v_1 \, | \, \ldots \, | \, \v v_p \right) \, \in \, \mathbb{R}^{n \times p}.
 \end{equation} 
For the nonparametric estimation of \eqref{eq:cond-inp-mix} by MoM, our goal is to evaluate the functional 
 \begin{equation} \label{eq:g-functional}
    g \mapsto \E_{X \sim \cD_j}[g(X)], \ \ \text{where}\ \ g(\v x) = (g_1(x_1),\ldots,g_n(x_n)),
\end{equation}  
which acts on functions $g$ that are Cartesian products of functions $g_i$.  We do this using appropriate expectations over $\mathcal{D}$ approximated by sample averages over $\m V$.

To this end we use the $d$th moment tensor of $\mathcal{D}$, defined as
\begin{equation} \label{eq:population-moment}
\t M^d  \, =  \, \E_{X \sim \mathcal{D}}[X^{\otimes d}]  \, \in \, S^d(\mathbb{R}^n).
\end{equation}
It is approximated by the $d$th
sample moment computed from the data matrix $\m V$:
\begin{equation} \label{eq:sample}
{\widehat{\t M}}^{\! d} \, = \, \frac{1}{p} \sum\nolimits_{i=1}^p \v v_i^{\otimes d} \, \in \, S^d(\mathbb{R}^n).
 \end{equation}
For functions $g : \mathbb{R}^n \rightarrow \mathbb{R}^n$ as in \eqref{eq:g-functional}, we will also use the expectations 
\begin{equation}\label{eq:my-weird-expectation}
    \mathbb{E}_{X \sim \cD} [g(X) \otimes X^{\otimes d-1}] \, \in \, \mathbb{R}^{n^d}.
\end{equation}
These are approximated by the sample averages
\begin{equation}\label{eq:my-moment}
    \frac{1}{p} \sum\nolimits_{i=1}^p g(\v v_i) \otimes \v v_i^{\otimes d-1} \, \in \, \mathbb{R}^{n^d}.
\end{equation}
 
In terms of the variables we wish to solve for, let the $d$th 
moment of component $\cD_j$ be 
\begin{equation} \label{eq:componentwise-moments}
\v m_j^{d} \, = \,  \E_{X \sim \mathcal{D}_j}[X^{\ast d}] \, \in \, \mathbb{R}^n.
\end{equation}
For the ``general means" in \eqref{eq:g-functional}, let
\begin{equation}\label{eq:my-y-moment}
\v y_j = \E_{X \sim \cD_j} [g(X)]  \in \mathbb{R}^n 
\end{equation}
and $\m Y = (\v y_1 \, | \, \ldots \, | \, \v y_r) \in \R^{n\times r}$, when $g$ is fixed and integrable with respect to $\mathcal{D}_j$.
We want to solve for \eqref{eq:componentwise-moments} (including the componentwise means) from \eqref{eq:sample}, and \eqref{eq:my-y-moment} from \eqref{eq:my-moment}.

\begin{remark}\label{rem:scalability}
To achieve scalability of MoM to high dimensions, it is mandatory that we never explicitly form high-dimensional high-order tensors.  The costs of forming $n^d$ tensors are prohibitive.
We evade them by developing new {\emph{implicit tensor decomposition methods}}, motivated by \cite{sherman2020estimating,pereira2022tensor} and kernel methods.  See \cref{sec:alg} \nolinebreak and~\ref{sec:gmeans}.
\end{remark}

\subsection{Systems of Equations}
We derive the 
relationship between the moments $\t M^d$ and $\widehat{\t M}^d$ of the mixture $\mathcal{D}$ and the moments $\v m_j^d$ and general means $\v y_j$ of the components $\mathcal{D}_j$. 
The relationship is interesting algebraically \cite{alexandr2023moment}, and forms the basis of our algorithm development.

\begin{definition} \label{def:projector}
Let $\v \lambda \vdash d$ be an integer partition.  
Define a projection operator $\m P_{\v \lambda} : \mathbb{R}^{n^d} \rightarrow \mathbb{R}^{n^{\ell(\v \lambda)}}$ acting on order $d$ tensors and outputing order $\ell(\v \lambda)$ (see \cref{sec:notation} for the definition) tensors via
\begin{equation*}
 \m P_{\v \lambda}(\t A)_{i_1, \ldots, i_{\ell(\v \lambda)}} = \begin{cases} 0 & \text{if } i_1, \ldots, i_{\ell(\v \lambda)}  \text{are~not~distinct} \\
\t A_{i_1, \ldots, i_1 (\lambda_1 \textup{ times}), i_2, \ldots, i_2 (\lambda_2 \textup{ times}), \ldots} & \text{otherwise}. 
\end{cases}
\end{equation*}
We write $\m P \equiv \m P_{(1, \ldots, 1)}$ when $\v \lambda$ consists of all $1$\!'s.  
\end{definition}

In particular, $\m P$ acts  by off-diagonal projection, by zeroing out all entries with non-distinct indices.
\cref{fig:diagonals} illustrates the other projections $\m P_{\v \lambda}$ when $d=3$.

\begin{figure}[htbp]
    \centering
    \subfloat{\includegraphics[width=0.25\linewidth]{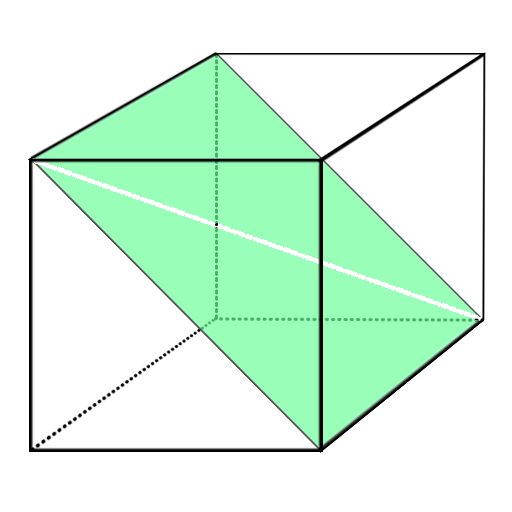}}\hspace{0.05\linewidth}
    \subfloat{\includegraphics[width=0.25\linewidth]{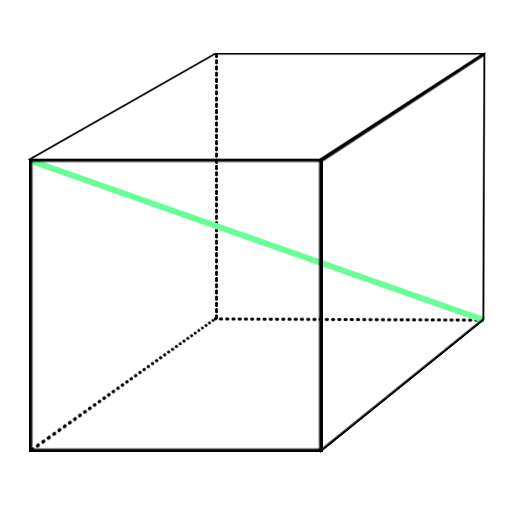}}
    \caption{Illustration of \cref{def:projector} when $d=3$.  Left: the entries selected by $\m P_{(2,1)}$ (green) with indices $iij$, $i \neq j$, resulting in a matrix indexed by $(i, j)$.   Right: the entries selected by $\m P_{(3)}$ with indices $iii$, resulting in a vector indexed by $i$.}
    \label{fig:diagonals}
\end{figure}

\begin{theorem} \label{thm:coupled} 
Let $\mathcal{D}$ be a conditionally-independent mixture, and $\t M^d$ its dth population moment tensor  \eqref{eq:population-moment}.
    Then for each  partition $\v \lambda = (\gl_1,\ldots,\gl_{\ell(\v \gl)})$ of $d$, 
    \begin{equation} \label{eq:desired}
        \m P_{\v \lambda}(\t M^d) = \m P \big{(} \sum\nolimits_{j=1}^r w_j \, \v m_j^{\lambda_1} \otimes \ldots \otimes \v m_j^{\lambda_{\ell(\v \lambda)}} \big{)}. 
    \end{equation}
\end{theorem}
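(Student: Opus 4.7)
The plan is to reduce the identity to an entrywise comparison, using the mixture decomposition of the expectation followed by the product structure of each component distribution. The work is mostly careful bookkeeping of tensor indices.

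First, by the definition of a mixture distribution and linearity of expectation, I would write
\begin{equation*}
\t M^d \;=\; \E_{X \sim \mathcal{D}}[X^{\otimes d}] \;=\; \sum_{j=1}^r w_j \, \E_{X \sim \mathcal{D}_j}[X^{\otimes d}],
\end{equation*}
and then apply $\m P_{\v\lambda}$ to both sides, using its linearity. This reduces the theorem to the claim that for each component $j$ and each partition $\v\lambda \vdash d$,
\begin{equation*}
\m P_{\v\lambda}\!\left( \E_{X\sim\mathcal{D}_j}[X^{\otimes d}]\right) \;=\; \m P\!\left(\v m_j^{\lambda_1} \otimes \cdots \otimes \v m_j^{\lambda_{\ell(\v\lambda)}}\right).
\end{equation*}

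Next I would verify this identity entrywise. Fix a tuple $(i_1,\ldots,i_{\ell(\v\lambda)})$ of indices in $\{1,\ldots,n\}$. If the indices are not pairwise distinct, both sides vanish by the definitions of $\m P_{\v\lambda}$ and $\m P$. Otherwise, the $(i_1,\ldots,i_{\ell(\v\lambda)})$ entry of the left-hand side equals
\begin{equation*}
\E_{X\sim\mathcal{D}_j}\!\left[ X_{i_1}^{\lambda_1} X_{i_2}^{\lambda_2} \cdots X_{i_{\ell(\v\lambda)}}^{\lambda_{\ell(\v\lambda)}} \right],
\end{equation*}
by the indexing prescription in \cref{def:projector}. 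Since $\mathcal{D}_j = \bigotimes_{i=1}^n \mathcal{D}_{ij}$ is a product distribution and the chosen indices are distinct, the random variables $X_{i_1}, \ldots, X_{i_{\ell(\v\lambda)}}$ are independent under $\mathcal{D}_j$. Therefore the expectation factors as
\begin{equation*}
\prod_{k=1}^{\ell(\v\lambda)} \E_{X_{i_k}\sim\mathcal{D}_{i_k j}}\!\left[ X_{i_k}^{\lambda_k} \right] \;=\; \prod_{k=1}^{\ell(\v\lambda)} (\v m_j^{\lambda_k})_{i_k},
\end{equation*}
which is exactly the $(i_1,\ldots,i_{\ell(\v\lambda)})$ entry of $\v m_j^{\lambda_1} \otimes \cdots \otimes \v m_j^{\lambda_{\ell(\v\lambda)}}$, and equals the corresponding entry of the right-hand side since indices are distinct so $\m P$ acts as the identity there.

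There is no serious obstacle here; the only subtlety is keeping the two projections straight: $\m P_{\v\lambda}$ selects entries with the repetition pattern encoded by $\v\lambda$ among the distinct-index positions of an order-$d$ tensor, whereas $\m P$ on the right-hand side simply enforces distinctness of the $\ell(\v\lambda)$ free indices after the partition-induced grouping. Once this alignment is spelled out, the equality follows from independence in each component plus linearity of the mixture. The argument is invariant under permutations of the parts of $\v\lambda$, which is consistent with the symmetry of $\t M^d$ and justifies that no ordering convention is lost.
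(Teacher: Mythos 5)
Your proposal is correct and follows essentially the same route as the paper's proof: decompose $\t M^d$ via the mixture and linearity, then verify the per-component identity entrywise on distinct indices using coordinatewise independence to factor the expectation into $(\v m_j^{\lambda_1})_{i_1}\cdots(\v m_j^{\lambda_{\ell(\v\lambda)}})_{i_{\ell(\v\lambda)}}$. No gaps.
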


\begin{proof}
    From \eqref{eq:cond-inp-mix},  \eqref{eq:population-moment} and conditioning on the class label $j$,
    \begin{equation} \label{eq:calc-1}
        \t M^d \, = \, \E_{X \sim \mathcal{D}}[X^{\otimes d}] \, = \, \sum\nolimits_{j=1}^r w_j  \E_{X \sim \mathcal{D}_j}[X^{\otimes d}].
    \end{equation}
    Fix the partition $\v \lambda \vdash d$ and write $\ell = \ell(\v \lambda)$.  
    We consider an index 
    \begin{equation*}
        \v i = (\underbrace{i_1, \ldots, i_1}_{\lambda_1 \textup{ times}}, \ldots, \underbrace{i_{\ell}, \ldots, i_{\ell}}_{\lambda_{\ell} \textup{ times}})
    \end{equation*}
    such that $i_1, i_2, \ldots, i_{\ell} \in [n]$ are distinct. 
    By coordinatewise independence in $\mathcal{D}_j$, 
    \begin{align*} 
        \E_{X \sim \mathcal{D}_j}[X^{\otimes d}]_{\v i} \, 
        &= 
        \, \E_{X \sim \mathcal{D}_j}[X_{i_1}^{\lambda_1} \ldots X_{i_{\ell}}^{\lambda_{\ell}}] \, = \, \E_{X \sim \mathcal{D}_j}[X_{i_1}^{\lambda_1}] \ldots \E_{X \sim \mathcal{D}_j}[X_{i_\ell}^{\lambda_{\ell}}] \\
        &= 
        \, (\v m_j^{\lambda_1})_{i_1} \ldots (\v m_j^{\lambda_{\ell}})_{i_{\ell}} = (\v m_j^{\lambda_1} \otimes \ldots \otimes \v m_j^{\lambda_{\ell}})_{i_1, i_2, \ldots, i_{\ell}}
    \end{align*}
    where we used the definition \eqref{eq:componentwise-moments}.  Ranging over such indices $\v i$ gives
    \begin{equation} \label{eq:calc-2}
        \m P_{\v \lambda}\big{(}  \E_{X \sim \mathcal{D}_j}[X^{\otimes d}] \big{)} \, = \, \m P( \v m_j^{\lambda_1} \otimes \ldots \otimes \v m_j^{\lambda_{\ell}}).
    \end{equation}
    Finally we put \eqref{eq:calc-1} together with \eqref{eq:calc-2} to deduce \eqref{eq:desired}.  
\end{proof}

Theorem~\ref{thm:coupled} generalizes results in \cite{guo2022learning}.  It relates the mixture moments to the componentwise moments via a coupled system of incomplete tensor decompositions.

\begin{example}\label{ex:coupled}
    We write out \eqref{eq:desired} in \cref{thm:coupled} for all moments of order $\leq 3$:
    \begin{align*}
    \t M^1   & = \, \sum\nolimits_{j=1}^r \! w_j \v m_j^1 \hspace{-6.5em} &\in \mathbb{R}^n \\[-0.1pt]
    \m P (\t M^2)  &= \, \m P \big{(} \sum\nolimits_{j=1}^r \! w_j  (\v m_j^1)^{\otimes 2}\big{)} \hspace{-6.5em}  &\in  S^2(\mathbb{R}^n)\\[-0.1em]
    \m P_{(2)}(\t M^2)  & = \, \sum\nolimits_{j=1}^r \! w_j \v m_j^2 \hspace{-6.5em} &\in \mathbb{R}^n \\[-0.1pt]
    \m P(\t M^3) &= \, \m P\big{(}\sum\nolimits_{j=1}^r \! w_j (\v m_j^1)^{\otimes 3}\big{)} \hspace{-6.5em} &\in S^3(\mathbb{R}^n) \\[-0.1pt]
    \m P_{(2,1)}(\t M^3) & = \, \m P\big{(} \sum\nolimits_{j=1}^r \! w_j \v m_j^2 \otimes \v m_j^1 \big{)} \hspace{-6.5em} &\in \mathbb{R}^{n \times n} \\[-0.1pt]
    \m P_{(3)}(\t M^3)  & = \, \sum\nolimits_{j=1}^{r} \! w_j \v m_j^3 \hspace{-6.5em} &\in \mathbb{R}^n.
    \end{align*}
    These equations are all of the information that the first three mixture moments $(\t M^d)$ contain regarding the componentwise moments  $(\v m_j^d)$.  
    Notice that the off-diagonal entries $\m P(\t M^d)$ belong to a diagonally-masked symmetric CP decomposition with components given by the mean vectors $\v m_j^1$.  The diagonals of $\t M^d$ involve higher-order moments $\v m_j^d$ as well.  The diagonals have partially symmetric CP decompositions.
\end{example}

Regarding the general means $\m Y$, the next proposition relates the joint expectation \eqref{eq:my-weird-expectation} to the componentwise general means \eqref{eq:my-y-moment}.  These equations are linear in $\m Y$.

\begin{proposition}\label{prop:general-equations}
Let $\mathcal{D}$ be a conditionally-independent mixture, and $\m Y$ its general means matrix  for a function $g$ \eqref{eq:my-y-moment}.  Then for all $d \geq 1$,
\begin{equation}\label{eq:general-means}
\m P \Big{(} \mathbb{E}_{X \sim \cD} [g(X) \otimes X^{\otimes d-1}]\Big{)} = \m P\Big{(} \sum\nolimits_{j=1}^r w_j \v y_j \otimes (\v m_j^1)^{\otimes d-1}\Big{)}.
\end{equation}
\end{proposition}

\begin{proof}
Fix an arbitrary index $(i_1,\ldots,i_d) \in [n]^d$ with distinct entries on the off-diagonal unmasked by $\P$. 
Let $X_{ij}$ be independent random variables with distributions $\cD_{ij}$. 
Then by conditional independence, we have
\begin{align*}
    \m P \Big{(} \mathbb{E}_{X \sim \cD} [g(X) \otimes X^{\otimes d-1}]\Big{)}_{i_1,\ldots,i_d}
    &= \,
    \sum\nolimits_{j=1}^r w_j
    \E g_{i_1}(X_{i_1 j})\prod\nolimits_{s = 2}^d \E X_{i_s j}\\
    &= \,
    \sum\nolimits_{j=1}^r w_j
    (\v y_j)_{i_1}\prod\nolimits_{s = 2}^d (\v m_j^1)_{i_s}\\
    &= \,
    \m P\Big{(} \sum\nolimits_{j=1}^r w_j \v y_j \otimes (\v m_j^1)^{\otimes d-1}\Big{)}_{i_1,\ldots,i_s}.
\end{align*}
Thus the two tensors in \eqref{eq:general-means} are equal as claimed.
\end{proof}

\subsection{Identifiability} \label{subsec:identify}
It is important to know that equations have a unique solution before developing algorithms to solve them.  
Here we prove that the system of incomplete tensor decompositions in 
\cref{thm:coupled} uniquely identify the weights and means.  
Also the linear systems in 
\cref{prop:general-equations} uniquely identify the general means.

We present a result in tensor completion, which may be of independent interest.

\begin{theorem} \label{thm:low_rank_unique}
    Let $2 < d < n$ and $r \leq \binom{\floor{(n-1)/2}}{\floor{d/2}}$.   Let   
    $\m A \in \mathbb{R}^{n \times r}$ be Zariski-generic\footnote{
        A property $\operatorname{P}(x)$ on $\mathbb{R}^N$ is said to hold for \textit{Zariski-generic} $x$ if there exists a nonzero polynomial $F$ on $\mathbb{R}^N$ such that $F(x) \neq 0$ implies $\operatorname{P}(x)$  \cite{harris2013algebraic}. 
        In particular,  if  $x$ is random and drawn from any absolutely continuous probability measure on $\mathbb{R}^N$, then $\operatorname{P}(x)$ holds with probability $1$.} matrix.
        If 
    \begin{equation}
    \m P \big{(}\sum\nolimits_{j=1}^r \v a_j^{\otimes d}\big{)} \, = \, \m P \big{(}\sum\nolimits_{j=1}^r \v b_j^{\otimes d}\big{)}
    \end{equation}
        for $\m B \in \mathbb{R}^{n \times r}$, then $\m A$ and $\m B$ are equal up to trivial ambiguities.  That is,
    $
    \m A \, = \, \m B \m \Pi
    $
    where $\m \Pi \in \mathbb{R}^{r \times r}$ is a permutation if $d$ is odd and a signed permutation  if $d$ is even. 
\end{theorem}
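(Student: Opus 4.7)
The plan is to reduce the incomplete symmetric tensor decomposition to a matrix rank-$r$ factorization via a Veronese-style flattening of $\m{P}(\sum_j \v{a}_j^{\otimes d})$, then to pin down the rank-$1$ components via a Jennrich-style simultaneous diagonalization. Let $k = \floor{d/2}$. I would first choose disjoint subsets $S_1, S_2 \subset [n]$ with $|S_1| = \floor{(n-1)/2}$ and $|S_2|$ chosen so that $\binom{|S_2|}{d-k} \geq r$, leaving a residual set $R = [n]\setminus(S_1\cup S_2)$ of size at least $2$; the hypotheses $d<n$ and $r \leq \binom{\floor{(n-1)/2}}{k}$ make this feasible (possibly after asymmetrically tuning $|S_1|$ vs $|S_2|$ depending on the parities of $d$ and $n$). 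For each $I \in \binom{S_1}{k}$ and $J \in \binom{S_2}{d-k}$, the multi-index $I \cup J$ has all distinct entries, so the entry of $\m{P}(\sum_j \v{a}_j^{\otimes d})$ there is known and equals $\sum_j \prod_{i \in I \cup J}a_{j,i}$. Assembling these entries yields a matrix identity
\begin{equation*}
M \,=\, \m{\Psi}_1(\m{A})\,\m{\Psi}_2(\m{A})^{\top} \,=\, \m{\Psi}_1(\m{B})\,\m{\Psi}_2(\m{B})^{\top},
\end{equation*}
where $\m{\Psi}_s \in \R^{\binom{|S_s|}{k_s} \times r}$ (with $k_1 = k$, $k_2 = d-k$) is the Veronese factor with entries $(\m{\Psi}_s)_{I,j} = \prod_{i \in I}a_{j,i}$.

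Next I would show that for Zariski-generic $\m{A}$, both $\m{\Psi}_1(\m{A})$ and $\m{\Psi}_2(\m{A})$ have full column rank $r$. This is an open condition defined by nonvanishing of a single $r\times r$ minor, and non-emptiness follows from an explicit choice like $a^*_{j,i} = t_j^{f(i)}$ for distinct $t_j$ and an injective weight function $f$, reducing to standard generalized Vandermonde nonvanishing. Consequently $\operatorname{rank}(M)=r$ and the column span of $\m{\Psi}_2$ is uniquely determined by $M$, leaving only a $GL_r$ ambiguity in the specific columns.

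To eliminate this ambiguity I would slice $\m{P}(\sum_j \v{a}_j^{\otimes d})$ at two spare indices $i_0, i_0' \in R$, yielding matrices $M^{(i_0)}, M^{(i_0')}$ of matching shape whose factorizations take the form $\tilde{\m{\Psi}}_1 \diag{\v{a}^{i_0}} \tilde{\m{\Psi}}_2^{\top}$ and $\tilde{\m{\Psi}}_1 \diag{\v{a}^{i_0'}} \tilde{\m{\Psi}}_2^{\top}$ (where $\v{a}^{i_0}\in\R^r$ denotes the $i_0$-th row of $\m{A}$, and the tilded factors come from flattening the order-$(d-1)$ sliced tensor). Under the generic condition that the ratios $a_{j,i_0}/a_{j,i_0'}$ are pairwise distinct across $j$, the generalized eigendecomposition of the pencil $(M^{(i_0)}, M^{(i_0')})$ uniquely recovers the columns of $\tilde{\m{\Psi}}_s$ up to a common permutation. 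The individual entries of $\v{a}_j|_{S_s}$ are then read off as ratios $a_{j,i}/a_{j,i'} = (\tilde{\m{\Psi}}_s)_{I\cup\{i\},j} / (\tilde{\m{\Psi}}_s)_{I\cup\{i'\},j}$ for any $I \in \binom{S_s\setminus\{i,i'\}}{k_s-1}$ (nonempty since $|S_s|\geq k_s+1$); combining the recoveries across $S_1$ and $S_2$ gives $\v{a}_j$ up to one overall sign, which is free when $d$ is even (since $\v{a}^{\otimes d}=(-\v{a})^{\otimes d}$) and pinned down when $d$ is odd, matching the signed-permutation conclusion.

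The main obstacle will be the Jennrich step: slicing changes the Veronese degree, so $M^{(i_0)}$ naturally lives in a differently-sized flattening space than $M$, and one must coordinate the choice of $|S_1|$, $|S_2|$ and the side on which each sliced index is absorbed so that $M^{(i_0)}$ and $M^{(i_0')}$ still factor through compatible Veronese structures while fitting inside the optimal bound $r \leq \binom{\floor{(n-1)/2}}{\floor{d/2}}$. A secondary obstacle is confirming that all the Zariski-open genericity conditions (full column rank of both Veronese factors, pairwise distinctness of the ratio eigenvalues, validity of the ratio-based entry recovery) jointly hold on a single dense open locus, which reduces to nonvanishing of an explicit finite collection of polynomials in the entries of $\m{A}$.
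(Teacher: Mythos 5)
There is a genuine gap, and it is precisely the step you flagged as ``the main obstacle'': the Jennrich step cannot reach the stated rank bound, and no re-balancing of $|S_1|$, $|S_2|$ or of the side absorbing the sliced index will fix it. For the generalized eigendecomposition of the pencil $(M^{(i_0)}, M^{(i_0')})$ to identify the rank-one terms, both factors $\tilde{\m{\Psi}}_1, \tilde{\m{\Psi}}_2$ must have full column rank $r$. Since they are flattenings of an order-$(d-1)$ slice built from \emph{disjoint} index sets avoiding $i_0, i_0'$ (disjointness is forced, because any row/column pair sharing a letter lands on a masked entry), you are constrained to $\binom{|\tilde{S}_1|}{k_1'} \geq r$ and $\binom{|\tilde{S}_2|}{k_2'} \geq r$ with $k_1' + k_2' = d-1$ and $|\tilde{S}_1| + |\tilde{S}_2| \leq n-2$. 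Optimizing over these choices caps you at roughly $r \leq \binom{\floor{(n-2)/2}}{\floor{(d-1)/2}}$, which is strictly below $\binom{\floor{(n-1)/2}}{\floor{d/2}}$ in all cases and, for even $d$, is smaller by a factor on the order of $n/d$: already for $d=4$, $n=11$ the theorem allows $r = \binom{5}{2} = 10$, while the pencil construction (one factor necessarily a degree-one Veronese of size $|\tilde{S}_1| \times r$) cannot certify more than $r = 5$. This is intrinsic to simultaneous diagonalization, which for order-$d$ symmetric tensors only reaches $r = \cO(n^{\floor{(d-1)/2}})$.

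The paper never extracts the rank-one components with a pencil. It uses the full-column-rank flattening --- your $\m{\Psi}_1(\m A)\m{\Psi}_2(\m A)^{\top}$ observation is exactly \cref{lem:full_rk_flat_sym} --- only to \emph{complete} the masked entries: each missing entry sits in an $(r+1)\times(r+1)$ submatrix of the reduced flattening whose other entries are already known, and is forced by the vanishing of its determinant (\cref{lem:fillmat}), via an induction organized along the path \eqref{eq:fillpath} of partitions recording repeated indices. Once the full tensor $\sum_j \v a_j^{\otimes d}$ is reconstructed, uniqueness of its symmetric CP decomposition at rank $r \leq \binom{\floor{(n-1)/2}}{\floor{d/2}}$ is imported wholesale from the generic identifiability theorem of \cite{chiantini2017generic}; that algebraic-geometric input, not a linear-algebra diagonalization, is what carries the conclusion past $r = \cO(n)$ when $d$ is even. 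To salvage your architecture you would need to replace the Jennrich step by an appeal to such a theorem --- at which point the slicing and the reserved indices $i_0, i_0'$ become unnecessary, and you have essentially rediscovered the paper's proof.
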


Our main tool to prove Theorem~\ref{thm:low_rank_unique} is the following lemma.

\begin{lemma} \label{lem:full_rk_flat_sym}
Let $\m A \in \mathbb{R}^{n \times r}$ and $\t T =  \sum\nolimits_{j=1}^r \v a_j^{\otimes s+t}  \in S^{s+t}(\mathbb{R}^n)$ with $s \leq t$. 
Let $\m T_0 \in \mathbb{R}^{\binom{n+s-1}{s} \times \binom{n+t-1}{t}}$ be the reduced matrix flattening of $\t T$ defined by
\begin{equation}\label{eq:my-flatten}
( \m T_0 )_{\v i, \v j} \, = \, \t T_{i_1, \ldots, i_s, j_1, \ldots, j_t}
\end{equation}  
for all non-decreasing words  $\v i$ and  $\v j$ over  $[n]$ of lengths  $s$ and  $t$ respectively.
If $\m A$ is Zariski-generic, then each $m \times m$ submatrix of $\m T_0$ has rank $\min \! \big{\{} m, r \big{\}}$.
\end{lemma}

Complete proofs are given in \cref{apx:pf_low_rank_unique}, but a sketch is included below.  

\begin{proof}[Proof Sketch (\cref{thm:low_rank_unique})] We show  the masked tensor $\t T = \m P \big{(} \sum\nolimits_{j=1}^r \v a_j^{\otimes d} \big{)}$ can be completed uniquely to a tensor of CP rank $r$. 
This is by considering the matrix flattening $\m T_0$ in \eqref{eq:my-flatten}. 
Locate $(r+1) \times (r+1)$-sized submatrices of $\m T_0$ where only one entry is missing, and use  \cref{lem:full_rk_flat_sym} to 
uniquely fill in the entry. 
The masked diagonal entries in $\t T$ are recovered in this way, sequentially according to the repetition pattern of its index, $\v \gl \vdash d$, in the following order:
\begin{align}\label{eq:fillpath}
    &2^11^{d-2} \, \rightarrow \, 2^21^{d-4} \, \rightarrow \, \ldots \, \rightarrow \,
    2^{\floor{d/2}}1^{d-2\floor{d/2}} \, \rightarrow \\
    &a_1^{b_1}\ldots a_s^{b_s}~ (a_i \leq 4) \, \rightarrow \,
    a_1^{b_1}\ldots a_s^{b_s}~ (a_i \leq 8) \, \rightarrow\ldots  \nonumber 
\end{align}   
After completing $\t T$, the rest follows from identifiability of CP~decompositions \nolinebreak \cite{chiantini2017generic}.  
\end{proof}

Theorem~\ref{thm:low_rank_unique} implies identifiability of the component weights and means from moments of $\cD$. 
    To ease the notation, from now on we denote the mean matrix as
    \begin{equation*}
        \m A = (\v a_1|\ldots|\v a_r) := (\v m_1^1|\ldots|\v m_r^1) \in \R^{n\times r}.
    \end{equation*}

\begin{theorem} \label{thm:wMid}
    Let $\mathcal{D}$ be a conditionally-independent mixture \eqref{eq:cond-inp-mix} with weights $\v w \in \Delta_{r-1}$  such that $w_j > 0$ for each $j$ and Zariski-generic means $\m A \in \mathbb{R}^{n \times r}$.
    Let $d_1, d_2 \in \mathbb{N}$ be distinct  such that $d_1 \geq 3$, $r \leq \min\set{\binom{\floor{(n-1)/2}}{\floor{d_1/2}}, \binom{n}{d_2}}$.
    Then $\v w$ and $\m A$ are uniquely determined from the equations
        \begin{equation}
            \P\big{(}\sum\nolimits_{j=1}^r w_j \v a_j^{\otimes d_1}\big{)} = \P\big{(}\t M^{d_1}\big{)},\ \ \ \ \ 
            \P\big{(}\sum\nolimits_{j=1}^r w_j \v a_j^{\otimes d_2}\big{)} = \P\big{(}\t M^{d_2}\big{)},
        \end{equation}
    up to possible sign flips on each $\v a_j$ if $d_1$ and $d_2$ are both even.
\end{theorem}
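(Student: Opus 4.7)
The plan is to reduce the $d_1$-equation to Theorem \ref{thm:low_rank_unique} via a weight-absorption trick, then use the $d_2$-equation to pin down the remaining scaling ambiguity through a linear-independence argument, and finally to case-split on the parities of $d_1$ and $d_2$ to extract $\v w$ and $\m A$. To begin, I would introduce the absorbed variables $\v c_j = w_j^{1/d_1} \v a_j$, so that $\sum_{j=1}^r w_j \v a_j^{\otimes d_1} = \sum_{j=1}^r \v c_j^{\otimes d_1}$. Because $w_j > 0$ and $\m A$ is Zariski-generic, the matrix $\m C = \m A \operatorname{diag}(w_1^{1/d_1}, \ldots, w_r^{1/d_1})$ is Zariski-generic in its own right: any proper Zariski-closed condition on $\m C$ pulls back through the positive diagonal scaling (a linear isomorphism) to a proper Zariski-closed condition on $\m A$, which the generic $\m A$ avoids. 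Supposing $(\tilde{\v w}, \tilde{\m A})$ is another solution with $\tilde w_j > 0$ and defining $\tilde{\v c}_j$ analogously, we have $\m P\bigl(\sum_j \v c_j^{\otimes d_1}\bigr) = \m P\bigl(\sum_j \tilde{\v c}_j^{\otimes d_1}\bigr)$. The hypotheses $2 < d_1 < n$ and $r \leq \binom{\floor{(n-1)/2}}{\floor{d_1/2}}$ then let me invoke Theorem \ref{thm:low_rank_unique} to conclude that, after relabeling, $\tilde{\v c}_j = \epsilon_j \v c_j$, where $\epsilon_j \in \{\pm 1\}$ if $d_1$ is even and $\epsilon_j = 1$ if $d_1$ is odd.

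Next, I would substitute into the $d_2$-equation. Since $\v a_j = w_j^{-1/d_1} \v c_j$, we have $w_j \v a_j^{\otimes d_2} = \gamma_j \v c_j^{\otimes d_2}$ where $\gamma_j := w_j^{(d_1-d_2)/d_1}$, and likewise for the tildes. The $d_2$-equation then rewrites as
\[
    \m P\Bigl( \sum_{j=1}^r \bigl( \gamma_j - \tilde\gamma_j \epsilon_j^{d_2} \bigr) \v c_j^{\otimes d_2} \Bigr) \, = \, 0.
\]
The main obstacle is to show that the $r$ off-diagonal tensors $\{\m P(\v c_j^{\otimes d_2})\}_{j=1}^r$ are linearly independent whenever $\m C$ is Zariski-generic and $r \leq \binom{n}{d_2}$. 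Identifying $\m P(\v c_j^{\otimes d_2})$ with the vector of squarefree-monomial evaluations $\bigl(\prod_{i \in S} c_{j,i}\bigr)_{S \subset [n],\, |S|=d_2} \in \mathbb{R}^{\binom{n}{d_2}}$, this is equivalent to the classical statement that $r$ evaluation functionals at generic points of $\mathbb{R}^n$ are linearly independent on the $\binom{n}{d_2}$-dimensional space $V$ spanned by the squarefree monomials of degree $d_2$. A standard induction on $r$ settles it: assuming generic $\v c_1, \ldots, \v c_{r-1}$ yield linearly independent functionals, the common kernel in $V$ has dimension at least $\binom{n}{d_2} - (r-1) \geq 1$ and therefore contains a nonzero polynomial, which in turn is nonzero at a generic $\v c_r$. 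Hence $\mathrm{ev}_{\v c_r}$ lies outside the span of the previous functionals. This yields $\gamma_j = \tilde\gamma_j \epsilon_j^{d_2}$ for every $j$.

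Finally, I would recover $w_j$ and $\v a_j$ by case analysis on parity. Because $d_1 \neq d_2$, the exponent $(d_1 - d_2)/d_1$ is nonzero and $t \mapsto t^{(d_1-d_2)/d_1}$ is a bijection on $\mathbb{R}_{>0}$. If $d_1$ is odd, Theorem \ref{thm:low_rank_unique} forces $\epsilon_j = 1$, so $\gamma_j = \tilde\gamma_j$, giving $w_j = \tilde w_j$ and $\v a_j = \tilde{\v a}_j$. If $d_1$ is even and $d_2$ is odd, then $\epsilon_j^{d_2} = \epsilon_j$, and positivity of $\gamma_j$ and $\tilde\gamma_j$ forces $\epsilon_j = +1$, again killing the sign ambiguity. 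If both $d_1$ and $d_2$ are even, then $\epsilon_j^{d_2} = 1$ so $w_j = \tilde w_j$, but the signs $\epsilon_j$ remain free, producing $\tilde{\v a}_j = \epsilon_j \v a_j$, exactly matching the sign-flip ambiguity stated in the theorem.
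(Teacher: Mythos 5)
Your proposal is correct and follows essentially the same route as the paper's proof: absorb the weights into $\v c_j = w_j^{1/d_1}\v a_j$, apply \cref{thm:low_rank_unique} at order $d_1$, use linear independence of $\{\m P(\v c_j^{\otimes d_2})\}$ to pin down $w_j^{1-d_2/d_1}$, and case-split on the parities of $d_1, d_2$. The only cosmetic difference is that you reprove the linear-independence step by an evaluation-functional induction, whereas the paper isolates it as \cref{lem:offdiag_indep} and argues via a nonzero determinantal polynomial.
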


\begin{proof}
    We absorb the weights into vectors by defining $\v u_j^{\otimes d_1} = w_j \v a_j^{\otimes d_1}$.
    If $d_1$ is odd, by \cref{thm:low_rank_unique} $\v u_j = w_j^{1/d_1} \v a_j$ are uniquely determined by $\m P(\t M^{d_1})$.
    Then
    \begin{equation} \label{eq:wlineq}
        \P\big{(}\sum\nolimits_{j = 1}^r w_j {\v a_j}^{\otimes d_2}\big{)}
       \, = \,
        \sum\nolimits_{j = 1}^r w_j^{1 - \frac{d_2}{d_1}} \P\big{(}{\v u_j}^{\otimes d_2}\big{)}
        \, = \,
        \P\big{(}\t M^{d_2}\big{)}
    \end{equation}
    is a linear system in $\eprod{\v w}{(1 - \frac{d_2}{d_1})}$ that admits the true weights as a solution.
    Since $\m A$ is Zariski-generic and $\v w$ is positive, 
    $\{ \P\left({\v u_j}^{\otimes d_2}\right)\}_{j=1}^r$ is linearly independent (see \cref{lem:offdiag_indep}).
    Hence \eqref{eq:wlineq} uniquely determines $\v w$.
    Uniqueness of $\v a_j = w_j^{-1/d} \v u_j$ also follows.
    The cases where $d_1$ is even are analogous. 
\end{proof}

\begin{remark}
    In \cite{chen2019beyond}, the authors proved that the first $r$ moments are sufficient to identify the means and weights for conditionally-independent mixtures with positive weights. 
    \cref{thm:wMid} shows that in fact for almost all mean matrices, only two of the first 
    \begin{equation*}
        d^* 
        = \min \set{d \in \mathbb{N}: r \leq 
            {\scriptsize
                \begin{pmatrix}
                    \floor{(n-1)/2}\\
                    \floor{d/2}
                \end{pmatrix}
            }
        } 
        = 
        \cO(\log r / \log n)
    \end{equation*}
    moments are required. 
    In particular, for $r < n/2$, the first 3 moments are sufficient. 
\end{remark}

Another important result is the identifiability of the general means \eqref{eq:my-y-moment} from the weights, means and the left-hand side of \eqref{eq:general-means}, as stated below.

\begin{theorem}\label{thm:gmeans_id}
    Let $\mathcal{D}$ be a conditionally-independent mixture with Zariski-generic means $\m A$ and strictly positive weights $\v w$.  
    Then $\v y_j = \E_{X \sim \cD_j}[g(X)]$ are identifiable from $(\v w, \m A)$ and the off-diagonal entries $\P\big{(}\E_{X\sim \cD}(g(X) \otimes X^{\otimes d-1})\big{)}$
    if $r\leq \binom{n-1}{d-1}$.
\end{theorem}

We give the proof in \cref{apx:pf_gmeansid}.  In particular, \cref{thm:gmeans_id} implies that if $(\v w, \m A)$ are identifiable from the first $d$ moments of $\cD$ (e.g., under the setting of \cref{thm:wMid}), then the first $d'$ componentwise moments \eqref{eq:componentwise-moments} are identifiable from the first $(d' + d - 1)$ moments of $\cD$.

    \section{The ALS Algorithm for Means and Weights} \label{sec:alg}
In this section we present an ALS algorithm to solve for the weights and means of the mixture given data samples.  The general means solver is presented in \cref{sec:gmeans}.
As suggested by \cref{thm:wMid}, we choose at least two different moments $d_1$ and $d_2$, and find weights $\v w$ and means $\m A$ that best approximate the joint moments.
Thus, we use a cost function 

\begin{align} 
    & \min_{\substack{\v w \in \Delta_{r-1} \\ \m A \in \mathbb{R}^{n \times r}}} \, f^{[d]} := \gt_1 f^{(1)} + \ldots + \gt_d f^{(d)} \label{eq:costfndef} \\[0.5pt] 
    & \quad  \quad \text{where } f^{(i)}(\v w, \m A; \m V) 
    := \big{\|}\P \big{(}{\widehat{\t M}}^{\! i} - 
    \sum\nolimits_{j=1}^r \! w_j \v a_j^{\otimes i}\big{)}
    \big{\|}^2. \nonumber
\end{align}
Here $\m V \in \R^{n\times p}$ is the given matrix of data observations \eqref{eq:data}.  
The scalars $\gt_i \geq 0$ are hyperparameters.
The reason for choosing the $\ell^2$ cost is that it enables efficient ALS optimization routines, in which there is no need to explicitly compute $\widehat{\t M}^i$ and ${\v a_j}^{\otimes i}$ owing to implicit or kernel based methods (cf. \cref{rem:scalability}).
We choose ALS in view of the multilinearity of the cost function (explained below).
Alternatively, first-order optimization methods like  gradient descent or quasi-Newton procedures (e.g., BFGS) could be used.  However, ALS also ran faster in our experiments.

\subsection{Overview} \label{sec:multilinear}
We outline the main points of the algorithm.
Finding $(\v w, \m A)$ that minimizes $f^{[d]}$ is an incomplete (due to the off-diagonal mask $\m P$) symmetric CP tensor decomposition problem, see \cref{ex:coupled}. 

Notice that in the cost function \eqref{eq:costfndef}, the off-diagonal mask $\m P$ creates a useful multilinearity.  
Namely the residuals:
\begin{equation} \label{eq:residual}
    \P \big{(}{\widehat{\t M}}^{\! i} - 
    \sum\nolimits_{j=1}^r \! w_j \v a_j^{\otimes i}\big{)}
    =
    \P\big(\,\frac{1}{p}\sum_{\ell = 1}^p\tprod{\v v_\ell}{i} - 
    \sum\nolimits_{j=1}^r \! w_j \v a_j^{\otimes i}\big)
 \end{equation}
are separately linear in the weights $\v w$ and each {\emph{row}} of the mean matrix $\m A$ (but {\emph{not}} in the columns of $\m A$).  
Multilinearity suggests an alternating least squares (ALS) scheme for the minimization of \eqref{eq:costfndef}.
That is, we alternatingly update the weight vector or one row of the mean matrix, while keeping all other variables fixed.

Each subproblem in this ALS scheme is a linear least squares problem, where the variable lies in just $\mathbb{R}^r$.  However there are $\mathcal{O}(n^d)$ linear equations (coming from the entries of \eqref{eq:residual}).
Thus in high dimensions, it is exorbitant to form the coefficient matrix directly.  
One of our key insights is that it is possible to compute the $r \times r$ normal equations for these least squares, while avoiding $\mathcal{O}(n^d)$-sized coefficient matrices.

Specifically we show that the entries in the $r\times r$ matrix for the normal equations are given by \textit{kernel evaluations} $K_i(\v a_j, \v a_{j'})$ and $K_i(\v a_j, \v v_{\ell})$, where
\begin{equation}\label{eq:kernel}
    K_i(\v x, \v y) := \ip{\P\tprod{\v x}{i}}{\P\tprod{\v y}{i}}.
\end{equation}
We prove that these kernels can be efficiently evaluated from the Gram matrices
\begin{equation*}
    \m G^{A, A}_s \! = \left(\eprod{\m A}{s}\right)^{\!\top} \! \left(\eprod{\m A}{s}\right)  \in \mathbb{R}^{r \times r},   \quad   
    \m G^{A, V}_s \! = \left(\eprod{\m A}{s}\right)^{\! \top} \! \left(\eprod{\m V}{s}\right)  \in \mathbb{R}^{r \times p} \quad (s = 1, \ldots, d).
\end{equation*}
In this way we obtain a tensor-free ALS algorithm, summarized in \cref{alg:basicALS}.

\begin{algorithm}
    \begin{algorithmic}[1]
        \caption{Baseline ALS algorithm for solving means and weights}
            \label{alg:basicALS}
            \Function{SolveMeanAndWeight}{data $\m V$, initialization ($\v w$, $\m A$), order $d$, hyperparameters $\v \gt$}
            \State Compute $\{\m G^{A, A}_s\}_{s = 1}^d$ and $\{\m G^{A, V}_s\}_{s = 1}^d$
            \While{not converged}
                \State 
                $\m A, \{\m G^{A, A}_s\}_{s = 1}^d, \{\m G^{A, V}_s\}_{s = 1}^d 
                \! \gets 
                \small{\mstep(\{\m G^{A, A}_s\}_{s = 1}^d, \{\m G^{A, V}_s\}_{s = 1}^d, \v w, \m A, d, \v \gt)}$
                \State 
                $\v w 
                \gets 
                \wstep(\{\m G^{A, A}_s\}_{s = 1}^d, \{\m G^{A, V}_s\}_{s = 1}^d, d, \v \gt)$
            \EndWhile
            \EndFunction
        \Return $\v w$, $\m A$
    \end{algorithmic}
\end{algorithm}

Here $\mstep$ (\cref{alg:Mstep}) updates the mean matrix row by row as well as the $\m G$ matrices, and $\wstep$ (\cref{alg:wstep}) updates the weights.
We present the details of $\wstep$ in \cref{sec:wstep} and $\mstep$ in \cref{sec:mstep}.
Analyses therein readily give the complexity of our baseline ALS algorithm:

\begin{lemma}
    Each loop of \textbf{while} in \cref{alg:basicALS} takes $\cO(npr + nr^3 + T_{QP}(r))$ flops and $\cO(r(n+p))$ storage, assuming $d = \cO(1)$. Here $T_{QP}(r)$ is the flop count for solving the convex quadratic programming problem \eqref{eq:qp} in $\R^r$. 
\end{lemma}

We stress that costs of $n^d$ are evaded due to our use of the kernel \eqref{eq:kernel}, as detailed next.
This is a main reason why the ALS framework is well-suited for high dimensions.  
Further improvements on the baseline algorithm are recommended in \cref{sec:improve}, yielding our final algorithm for computing means and weights in \cref{alg:finalALS}.

\subsection{Fast Update of Weights} \label{sec:wstep}
We explain $\wstep$ in \cref{alg:basicALS}.
Consider a generalized weight update problem, where $\v \pi \in \mathbb{R}^{p}$ is a scaling vector:
\begin{align}
&  \min_{\v w \in \mathbb{R}^r} f^{[d]}(\v w; \m A, \v \pi, \m V) := \sum\nolimits_{i = 1}^d \gt_i f^{(i)}(\v w; \m A, \v \pi, \m V) \label{eq:costfn_gen} \\[4.5pt] 
&  \quad  \quad \text{where } f^{(i)}(\v w; \m A, \v \pi, \m V) 
:=  
\big{\|} \P \big{(}\sum\nolimits_{\ell=1}^p \pi_{\ell} \v v_{\ell}^{\otimes i} - \sum\nolimits_{j=1}^r w_j \v a_j^{\otimes i}\big{)} \big{\|}^2. \nonumber
\end{align} 
If we choose $\pi_\ell = 1/p$ and impose the simplex constraint $\v w \in \Delta_{r-1}$, then we recover the weight update problem in ALS for \eqref{eq:costfndef}.
The goal is to form the $r\times r$ normal equation for $\v w$ without ever forming any tensors. As mentioned we use the kernels
\begin{equation*}
    K_i(\v x, \v y) = \ip{\P\tprod{\v x}{i}}{\P\tprod{\v y}{i}}.
\end{equation*}
Expanding the square in \eqref{eq:costfn_gen}, the cost function can be written as 
\begin{equation} \label{eq:implcost_esp}
    f^{[d]}(\v w; \m A, \v \pi, \m V) 
    = 
    \sum_{i = 1}^d \gt_i \big{(}
    \sum_{j, \ell} w_j\pi_\ell \cdot K_i(\v a_j, \v v_\ell)
    +
    \sum_{j, j'} w_j w_{j'} \cdot K_i(\v a_j, \v a_{j'})
    \big{)}
     + C,
\end{equation}
where $C$ is a constant independent of $\v w$ and $\m A$.
Hence, we need to evaluate the kernels efficiently.
We make a connection with symmetric function theory in algebra.

\begin{definition}
    The elementary symmetric polynomial (ESP) of degree $d$ in $n$ variables \nolinebreak is 
    \begin{equation*}
        e_d(x_1,\ldots,x_n) :=  \sum\nolimits_{1 \leq i_1 <\cdots<i_d \leq n} x_{i_1}x_{i_2}\cdots x_{i_d}.
    \end{equation*}
    For $d=0$ define $e_0(\v x) := 1$.
\end{definition}

It is not hard to check that 
\begin{equation*}
    K_i(\v x, \v y) = i! e_i(\v x * \v y),
\end{equation*} 
where $*$ denotes elementwise multiplication.
To evaluate $K_i$ efficiently, we apply the Newton-Gerard formula \cite{macdonald1998symmetric} expressing ESPs in terms of power sum polynomials:
\begin{equation} \label{eq:esp2power}
     e_i(\v x) = \frac{1}{i} \sum\nolimits_{s = 1}^{i} \! e_{i-s}(\v x) (-1)^{s-1} p_s(\v x),
\end{equation}
where $p_s(\v x) := \sum_{j=1}^n x_j^s$.
This enables a recursive evaluation of kernels from the power sums $p_s(\v a_j * \v a_{j'})$ and $p_s(\v a_j * \v v_\ell)$.  These are the entries of the Gram matrices
\begin{equation} \label{eq:defG}
    \m G^{A, A}_s \! = \left(\eprod{\m A}{s}\right)^{\!\top} \! \left(\eprod{\m A}{s}\right)  \in \mathbb{R}^{r \times r},   \quad   
    \m G^{A, V}_s \! = \left(\eprod{\m A}{s}\right)^{\! \top} \! \left(\eprod{\m V}{s}\right)  \in \mathbb{R}^{r \times p} \quad (s = 1, \ldots, d).
\end{equation}
Altogether, \cref{alg:prepnormeqn} is how we compute the normal equation for $\v w$ in \eqref{eq:implcost_esp}.

\begin{algorithm}
    \begin{algorithmic}[1]
        \caption{Prepare normal equation $\m L \v w = \v r$ for weight update}
        \Function{PrepNormEqn}{$\{\m G_s^{A, A}\}_{s = 1}^d$, $\{\m G^{A, V}_s\}_{s = 1}^d$, scaling vector $\v \pi$, order $d$, hyperparameters $\v \gt$}
            \label{alg:prepnormeqn}
            \State
                $\m E^{A, A}_0 \gets \ind{r \times r}$, \,\, $\m E^{A, V}_0 \gets \ind{r \times p}$ 
            \For{$i = 1, ..., d$}
                \State 
                    $\m E^{A, A}_i \gets \frac{1}{i}\sum_{s = 1}^{i} \m E^{A, A}_{i-s} * (-1)^{s-1} \m G^{A, A}_s$
                \State 
                    $\m E^{A, V}_i \gets \frac{1}{i}\sum_{s = 1}^{i} \m E^{A, V}_{i-s} * (-1)^{s-1} \m G^{A, V}_s$
            \EndFor
            \State
                $\m L \gets \sum_{i = 1}^d i!\gt_i \m E^{A, A}_i$
            \State
                $\v r \gets \sum_{i = 1}^d i!\gt_i \m E^{A, V}_i \! \v \pi$ \vspace{0.5ex} 
            \EndFunction 
        \Return $\m L$, $\v r$
    \end{algorithmic}
\end{algorithm}

\begin{lemma}\label{prop:prepnormeqn_cost}
    Given the matrices $\m G^{A, A}_s$ and $\m G^{A, V}_s$\!,  \cref{alg:prepnormeqn} computes $(\m L, \v r)$ such that the normal equations for the weight update \eqref{eq:costfn_gen} are  $\m L \v w = \v r$ in $\cO(pr)$ flops and $\cO(r(n + p))$ storage, assuming $d = \cO(1)$.
\end{lemma}

\begin{proof}
    \!Recursively evaluating all ESPs by the Newton-Gerard formula takes $\cO(d^2)$  additions and entrywise multiplications of $r \times p$ and $r \times r$ matrices.
    So if $d = \mathcal{O}(1)$, \cref{alg:prepnormeqn} takes $\cO(r(r + p)) = \cO(pr)$ flops and $\cO(r(n + p))$ storage.
\end{proof}

\cref{alg:prepnormeqn} is the key step behind our implicit MoM method for estimating the weights and means.   Further, the normal equations generically have a unique solution.
\begin{proposition} \label{prop:invertL_w}
    Assume $\gt_d > 0$ and $\m A$ is Zariski-generic. Then the matrix $\m L \in \mathbb{R}^{r \times r}$ for the normal equations of \eqref{eq:costfn_gen} is invertible, provided $r \leq \binom{n}{d}$.
\end{proposition}

The proof is given in \cref{apx:pf_conv}.
In the weight update problem for our model, we have the simplex constraint as well.
This is not hard to handle in a tensor-free way.
Recall that for any (full rank) least squares problem constrained to set $S$ it holds
\begin{equation*}
\argmin_{\v x \in S} \| \m C \v x - \v b \|^2 \,\,\, = \,\,\, \argmin_{\v x \in S} \| \v x - \v x^{*} \|^2_{\m C^{\top} \m C},
\end{equation*}
where $\v x^{*} = \argmin \| \m C \v x - \v b \|^2$ is the unconstrained minimum and $\norm{\v x}_{\m C^{\top} \m C} := \| \m C \v x \|$.
Thus we let $\v w_0 = \m L^{-1} \v r$ be the unconstrained solution, and update $\v w$ according to
\begin{equation} \label{eq:qp}
\v w \, \leftarrow \, \argmin_{\v w \in \Delta_{r-1}} \| \v w - \v w_0 \|_{\m L}^2.
\end{equation}
Equation~\eqref{eq:qp} is a convex quadratic program (QP) that can be solved efficiently by standard algorithms.
The weight update algorithm is summarized in \cref{alg:wstep}.

\begin{algorithm}
    \begin{algorithmic}[1]
        \caption{Update the weights $\v w$}
            \label{alg:wstep}
            \Function{UpdateWeight}{$\{\m G^{A, A}_s\}_{s = 1}^d$, \!\!\! $\{\m G^{A, V}_s\}_{s = 1}^d$,  order $d$, hyperparameters \nolinebreak $\v \gt$}
            \State $\m L, \v r \gets \prepnormeqn(\{\m G^{A, A}_s\}_{s = 1}^d,~\{\m G^{A, V}_s\}_{s = 1}^d,~1/p \cdot \ind{},~d,~\v \gt)$
            \State $\v w_0 \gets \m L^{-1} \v r$
            \State Solve convex QP: $\v w \gets \argmin_{\v w\in \gD_{r-1}} \norm{\v w - \v w_0}_{\m L}^2$
            \EndFunction
        \Return $\v w$
    \end{algorithmic}
\end{algorithm}

\begin{remark}
    The cost function $f^{[d]}$ can be efficiently evaluated and monitored during the ALS iterations.
    Let $\m L$, $\v r$ be the output of $\prepnormeqn$ in the weight step and $\v w$ be the current weight. 
    The current cost is $\norm{\m L \v w - \v r}^2$.
\end{remark}

\subsection{Fast Update of Rows of Mean Matrix}\label{sec:mstep}
We explain $\mstep$ in \cref{alg:basicALS}.
Consider updating the $k$th row $\v a^k \in \mathbb{R}^r$ while keeping the weights and other rows in $\m A$ fixed.
We see from \cref{sec:multilinear} this is a linear least squares problem.
The goal is then to identify and compute its normal equation efficiently as in the weight update.
We show that it reduces to a general weight update problem \eqref{eq:costfn_gen}.
\begin{proposition} \label{prop:row_update}
    Let $\m A^{(k)}$ and $\m V^{(k)}$ be $\m A$ and $\m V$ with the $k$th row removed.
    Define $\m G^{A^{(k)}\!, A^{(k)}}_s \in \mathbb{R}^{r \times r}$ and $\m G^{A^{(k)},  V^{(k)}}_s \in \mathbb{R}^{r \times p}$ analogously to \eqref{eq:defG}.
    Let 
    \begin{equation*}
        \hat{\m L}, \hat{\v r} \leftarrow \prepnormeqn(\{\m G^{A^{(k)},  A^{(k)}}_s\}_{s = 1}^d,~\{\m G^{ A^{(k)},  V^{(k)}}_s\}_{s = 1}^d,~\frac{\v v^k}{p},~d-1,~\!(2\gt_2,\ldots,d\gt_d)),
    \end{equation*}
    and $\m L = \hat{\m L} + \gt_1$, $\v r = \hat{\v r} + \gt_1 \mu_k$, where $\v \mu = (1/p) \m V \ind{}$ is the sample mean.
    Then the normal equations for $\v \gb^k = \v w * \v a^k \in \mathbb{R}^{r}$ in \eqref{eq:costfndef} are given by $\m L \v \gb^k = \v r$.
\end{proposition}

\begin{proof}
    We start by identifying terms relevant to $\v a^k$ in the cost $f^{[d]}$ \eqref{eq:costfndef}.
    Define $\P^{(k)}$ as the projection sending order-$d$ tensors to order-$d$ tensors by
    \begin{equation} \label{eq:proj_k}
        \P^{(k)}(\t A)_{i_1\ldots i_{d}} :=  \begin{cases} 
        \t A_{i_1, \ldots, i_d} & \text{if } k, i_1, \ldots, i_d \text{ are distinct} \\ 
        0 & \text{otherwise}.
        \end{cases}
    \end{equation}
    We separate the least squares cost $f^{(i)}$ at order $i \geq 2$ in \eqref{eq:costfndef} as follows:
    \begin{align} \label{eq:separation}
        \Big{\|}\P\big{(}
            \sum\nolimits_{j = 1}^r \! w_j  \v a_j^{\otimes i}  - {\widehat{\t M}}^{\! i} 
        \big{)}\Big{\|}^2
        =
        &\underbrace{
            \Big{\|}\P^{(k)}\big{(}
                \sum\nolimits_{j = 1}^r w_j \v a_j^{\otimes i} - {\widehat{\t M}}^{\! i} 
            \big{)} \Big{\|}^2
        }_{\text{independent of  }\v a^k}  \nonumber\\
        &+
        i \, \underbrace{
            \Big{\|}
            \sum_{j = 1}^r w_j (\v a_j)_k \P^{(k)}\v a_j^{\otimes i-1} 
            - 
            \frac{1}{p}\sum_{\ell = 1}^p (\v v_\ell)_{k}  \P^{(k)}\v v_\ell^{\otimes i-1}
            \Big{\|}^2
        }_{\text{dependent on  }\v a^k} 
    \end{align}

    Writing
    $\v \gb^k = \v w * \v a^k \in \mathbb{R}^r$, 
    it reduces to finding $\v \gb^k$ that minimizes
    \begin{equation} \label{eq:alsd-1}
        i \, \Big{\|}\P \Big(\sum\nolimits_{j = 1}^r (\v \gb^k)_j (\v a_j^{(k)})^{\otimes i-1} - \frac{1}{p}\sum_{\ell = 1}^p (\v v_\ell)_{k}  (\v v^{(k)}_\ell)^{\otimes i-1} \Big{)}\Big{\|}^2,
    \end{equation}
    where for a vector $\v u$ we denote $\v u^{(k)}$ the vector with its $k$th entry removed.
    It follows that minimizing $\sum_{i \geq 2}\gt_i f^{(i)}$ with respect to $\v a^k$ is equivalent to solving
    \begin{equation}\label{eq:mybeta}
        \min_{\v \gb^k \in \mathbb{R}^r}
        \sum_{i = 2}^d \gt_i \cdot i\,
        \Big{\|} 
            \P \Big{(}
                \sum\nolimits_{\ell=1}^p \frac{(\v v^k)_{\ell}}{p} (\v v^{(k)}_\ell)^{\otimes i-1} 
                - 
                \sum\nolimits_{j=1}^r (\v \gb^k)_j \big(\v a^{(k)}_j\big)^{\otimes i-1}
            \Big{)} 
        \Big{\|}^2.
    \end{equation}
    Notice that this is an instance of the generalized weight update \eqref{eq:costfn_gen} in \cref{sec:wstep} with tensor order up to $d-1$ and vector length $n-1$.
    Specifically the normal equations for \eqref{eq:mybeta} are given by
    \begin{equation*}
        \hat{\m L}, \hat{\v r} \leftarrow \prepnormeqn(\{\m G^{A^{(k)},  A^{(k)}}_s\}_{s = 1}^d,~\{\m G^{ A^{(k)},  V^{(k)}}_s\}_{s = 1}^d,~\frac{\v v^k}{p},~d-1,~\!(2\gt_2,\ldots,d\gt_d)).
    \end{equation*}

    To incorporate the cost $\gt_1f^{(1)}$ at order $i = 1$, note the term dependent on $\v a^k$ is 
    $\gt_1|\langle \ind{}, \v \gb^k \rangle - \mu_k|^2$, where $\v \mu$ is the sample mean.
    To include this in the normal equations, adjust the output of \textsc{PrepNormEqn} by 
    $\m L = \hat{\m L} + \gt_1$ and $\v r = \hat{\v r} + \gt_1 \mu_k$.
\end{proof}

Crucially then, for the row updates of $\m A$ in ALS it also possible to evade costs of $\mathcal{O}(n^d)$ by never explicitly forming tensors.
The routine for obtaining and solving the normal equations for $\v \gb^k$ is summarized in \cref{alg:Mstep} below.
Note that once $\m G^{A\!, A}_s$ and $\m G^{A,  V}_s$ are computed, we can obtain $\m G^{A^{(k)}\!, A^{(k)}}_s$ and $\m G^{A^{(k)},  V^{(k)}}_s$ as rank-1 updates:
\begin{equation*}
    \m G^{A^{(k)}\!, A^{(k)}}_s = \m G^{A, A}_s - (\v a^k)^{*s} \left((\v a^k)^{*s}\right)^\top, \quad
    \m G^{A^{(k)},  V^{(k)}}_s = \m G^{A, V}_s - (\v a^k)^{*s} \left((\v v^k)^{*s}\right)^\top\!.
\end{equation*}

\begin{algorithm}
    \begin{algorithmic}[1]
        \caption{Update the mean matrix $\m A$}
            \label{alg:Mstep}
            \Function{UpdateMean}{data $\v V$, $\{\m G^{A, A}_s\}_{s = 1}^d$, $\{\m G^{A, V}_s\}_{s = 1}^d$, weights $\v w$, current means $\m A$, order $d$, hyperparameters $\v \gt$}
                \State $\v \mu \gets \frac{1}{p}\sum_{\ell = 1}^p \v v_\ell$
                \For{$k = 1, \ldots, n$}
                \Comment{\textbf{Solve the $k$th row}}
                    \For{$s = 1, ..., d$}
                        \Comment \textbf{rank-1 deflation}
                        \State 
                        $\m G^{A, A}_s \gets \m G^{A, A}_s - (\v a^k)^{*s} \left((\v a^k)^{*s}\right)^\top$
                        \State 
                        $\m G^{A, V}_s \gets \m G^{A, V}_s - (\v a^k)^{*s} \left((\v v^k)^{*s}\right)^\top$
                    \EndFor
                    \State $\v \pi \gets \v v^k/p$
                    \State 
                    $\v a^k\gets \textsc{SolveRow}(
                        k,
                        \{\m G^{A, A}_s\}_{s = 1}^d, 
                        \{\m G^{A, V}_s\}_{s = 1}^d,
                        \v w, \v \pi, \v \mu, d, \v \gt
                    )$
                    \State Update mean matrix $\m A(k,:) \gets \v a^k$
                    \For{$s = 1, ..., d$}
                        \Comment \textbf{update $\m G$ matrix}
                        \State 
                        $\m G^{A, A}_s \gets \m G^{A, A}_s + (\v a^k)^{*s} \left((\v a^k)^{*s}\right)^\top$
                        \State 
                        $\m G^{A, V}_s \gets \m G^{A, V}_s + (\v a^k)^{*s} \left((\v v^k)^{*s}\right)^\top$
                    \EndFor
                \EndFor
                \vspace{0.5ex}
            \EndFunction
            \Return $\m A, \{\m G^{A, A}_s\}_{s = 1}^d, \{\m G^{A, V}_s\}_{s = 1}^d$

            \vspace{4ex}

            \Function{SolveRow}{row index $k$, $\{\m G^{A, A}_s\}_{s = 1}^d$, $\{\m G^{A, V}_s\}_{s = 1}^d$, weights $\v w$, scaling vector $\v \pi$, data mean $\v\mu$, order $d$, hyperparameters $\v \gt$}
                
                \State $\m L, \v r \gets {\prepnormeqn(\{\m G^{A, A}_s\}_{s},\{\m G^{A, V}_s\}_{s},\v \pi,d-1,(2\gt_2 \ldots, d\gt_d))}$
                \State $\m L \gets \m L + \gt_1$ 
                \State $\v r \gets \v r + \gt_1  \mu_k$
                \State Solve for new row $\v a^k \gets (\m L^{-1} \v r) / \v w$
            \EndFunction
            \Return $\v a^k$
    \end{algorithmic}
\end{algorithm}

\begin{lemma}
    Given $\m G^{A, A}_s$ and $\m G^{A, V}_s$, \cref{alg:Mstep} updates the mean matrix $\m A$ and matrices $\m G^{A, A}_s$, $\m G^{A, V}_s$ in $\cO(npr + nr^3)$ flops and $\cO(r(n + p))$ storage, assuming $d = \cO(1)$.
\end{lemma}

\begin{proof}
    In each instance of the outer \textbf{\textit{for}} loop, the rank-1 updates take 
    $\cO(pr)$ flops.
    The normal equations are computed in $\cO(pr)$ flops by \cref{prop:prepnormeqn_cost}.
    Solving them takes $\cO(r^3)$ flops.
    Since there are $n$ calls to \textsc{SolveRow}, the total flop count is $\cO(npr + nr^3)$.
    Storage is comparable to updating the weights, and so $\cO(r(n + p))$.
\end{proof}

Furthermore we show the normal equations are generically full rank, and thus there is a unique update for $\v a^k$ in ALS.
\begin{proposition} \label{prop:invertL_a}
    Assume $\gt_d > 0$ and $\m A^{(k)}$ is Zariski-generic, where $\m A^{(k)}$ denotes $\m A$ with the $k$th row removed.  Then the matrix $\m L \in \mathbb{R}^{r \times r}$ for the normal equations in \cref{prop:row_update} is invertible, provided $r \leq \binom{n-1}{d-1}$.
\end{proposition} 

The proof is similar to that of \cref{prop:invertL_w}, and can be found in \cref{apx:pf_conv}.
By \cref{prop:invertL_a}, there is a unique optimal update for the row $\v a^k \gets (\m L^{-1} \v r) / \v w$ if the weights are strictly positive.

\subsection{Convergence Theory}\label{sec:alsconv}
Here we present results on the convergence of ALS (\cref{alg:basicALS}).
Notice that the cost function $f^{[d]}$ is a high degree polynomial in $\v w$ and $\m A$. 
Thus, we do not expect global convergence guarantees to the global minimum (which is generically unique at the population level due to \cref{thm:wMid}).
That said, we are still able to prove local linear convergence for finitely many samples.

\begin{theorem} \label{thm:alsconv}
    Let $\mathcal{D}$ be a conditionally-independent mixture  \eqref{eq:cond-inp-mix} with weights $\v w^* \in \Delta_{r-1}$ such that $w^*_j > 0$ for each $j$ and  Zariski-generic means $\m A^* \in \mathbb{R}^{n \times r}$.  
    Let $d_1, d_2$ be as in \cref{thm:wMid} and not both even. 
    Let $d \in \mathbb{Z}$  and $\v \gt \in \mathbb{R}_{\geq 0}^d$ satisfy $d \geq d_1, d_2$ and $\tau_{d_1}, \tau_{d_2} > 0$.  
    Let $\m V_p \in \mathbb{R}^{n \times p}$ be an i.i.d. sample of size $p$ from $\mathcal{D}$, and $\widehat{\t M}^i_p$ the corresponding $i$th sample moment \eqref{eq:sample}.
    Consider applying \cref{alg:basicALS} to minimize $f^{[d]}: \gD_{r-1}\times \R^{n\times r} \rightarrow \R_{\geq 0}$ given by
    \begin{equation} \label{eq:alsconv_cost}
        f^{[d]}(\v w, \m A; \m V_p) = \sum\nolimits_{i = 1}^d \gt_i \big{\|} \P\big{(}\sum\nolimits_{j = 1}^r w_j \v a_j^{\otimes i} - \widehat{\t M}^i_p\big{)}\big{\|}^2.
    \end{equation}
    Then the cost is non-increasing at each iteration of \cref{alg:basicALS}, and
    \begin{enumerate}  
        \item For any critical point $(\v w_c, \m A_c)$ of $f^{[d]}$ with $\v w_c$ in the interior of $\gD_{r-1}$ such that the Hessian of $f^{[d]}$ (with respect to $\v w \in \gD_{r-1}$ and $\m A$) is positive definite, \cref{alg:basicALS} converges locally linearly to $(\v w_c, \m A_c)$.
        \item For generic ground-truth weights $\v w^*$ and means $\m A^*$, there exist open neighborhoods $\mathcal{O}_i$ of $\P \t M^i$ so that if $\P \widehat{\t M}^i_p \in \mathcal{O}_i$ for all $i$, then there is a unique global minimum $(\v w_p^*, \v A_p^*)$ of $f^{[d]}$, and the Hessian is positive definite at $(\v w_p^*, \v A_p^*)$.
        \item As $p \rightarrow \infty$, $(\v w^*_p, \m A_p^*)$ converges almost surely to $(\v w^*, \m A^*)$.
    \end{enumerate}
\end{theorem}

The proof of \cref{thm:alsconv} is in \cref{apx:pf_conv}.
The ingredients are a classic result in alternating optimization \cite{bezdek2003convergence}, together with facts about polynomial \nolinebreak  mappings \nolinebreak \cite{sommese2005numerical}.

\begin{remark}
    Bounding the size of  $\cO_i$ in \cref{thm:alsconv}
    seems nontrivial at this level of generality.
    For comparison, in \cite{balakrishnan2017statistical} about a likelihood-based algorithm for mixture models, 
    the authors could control the size of the attraction basins only for specific parametric models, e.g. a mixture of two Gaussians.
\end{remark}

In the next propositions, we establish detailed rates for the modes of convergence in items 1 and 3 of \cref{thm:alsconv}.
First comes the rate of the local linear convergence.

\begin{proposition}[local linear convergence] \label{prop:linconv}
Take the setup of \cref{thm:alsconv} and assume $\widehat{\t M}_p^i \in \mathcal{O}_i$.
    Let $\m H_{f^{[d]}} \in \mathbb{R}^{(r + nr) \times (r + nr)}$ be the Hessian matrix of $f^{[d]}$ with respect to $(\v w, \m A)$ evaluated at $(\v w_p^*, \m A_p^*; \m V_p)$, with rows and columns ordered by $(\v w, \v a_1, \ldots, \v a_r)$.
    Let $\m Q \in \R^{n \times (n-1)}$ be an orthonormal basis for the space orthogonal to the all-ones vector.
    Let $\widehat{\m Q} = \operatorname{Diag}({\m Q, \m I_{nr}}) \in \mathbb{R}^{(n + nr) \times (n-1+nr)}$ and $\m H = \widehat{\m Q}^\top \m H_{f^{[d]}} \widehat{\m Q}$. 
    Run \cref{alg:basicALS} as in \cref{thm:alsconv}.  Let its iterates be $(\v w_{p}^{(t)}, \m A_{p}^{(t)})$  and define the errors by 
    $\v e_p^{(t)} := (  \v w_p^{(t)} - \v w_p^*, \m A_p^{(t)} - \m A_{p}^*) \in \mathbb{R}^{r + nr}$.  Then for $\v e_p^{(t)}$ sufficiently small,
    \begin{equation}
        \v e_p^{(t+1)} = (\m I - \m M^{-1} \m H) \v e_p^{(t)} + \cO(\|\v e_p^{(t)}\|^2),
    \end{equation}
    where $\m M$ is the block lower-triangular part of $\m H$.
    Moreover, the spectral radius obeys $\gr(\m I - \m M^{-1} \m H) < 1$.  Explicit formulas for the entries of $\m H$ are in \cref{apx:pf_conv}.
\end{proposition}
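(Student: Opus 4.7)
The plan is to view one sweep of \cref{alg:basicALS} as a block Gauss--Seidel (BGS) step on the quadratic Taylor expansion of $f^{[d]}$ at $(\v w_p^*, \m A_p^*)$, and then invoke classical BGS convergence theory.

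First I would reduce to unconstrained local coordinates. Since $w_j^* > 0$ for each $j$ (inherited from \cref{thm:wMid}), the simplex constraint in \eqref{eq:qp} is locally inactive and the QP collapses to a linear solve on the affine hyperplane $\{\sum_j w_j = 1\}$. Changing variables via $\widehat{\m Q}$ yields an unconstrained quadratic subproblem per block. The ALS blocks are $\v w$ and the successive rows of $\m A$, as described in \cref{sec:multilinear}; the column ordering $(\v w, \v a_1, \ldots, \v a_r)$ used in the statement differs from the row-wise update ordering by a permutation, so it suffices to work in whichever ordering makes the ALS updates block lower-triangular.

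Next, I would derive the error recursion. For any block $k$, ALS sets the $k$th block-row of $\nabla f^{[d]}$ to zero using the most recent values of the earlier blocks and stale values of the later blocks. Taylor expanding $\nabla f^{[d]}$ about $(\v w_p^*, \m A_p^*)$ and writing $\m H = \m M + (\m H - \m M)$ with $\m M$ the block lower-triangular part of $\m H$, this stationarity condition reads
\begin{equation*}
    \m M \v e_p^{(t+1)} = (\m M - \m H) \v e_p^{(t)} + \cO(\|\v e_p^{(t)}\|^2 + \|\v e_p^{(t+1)}\|^2),
\end{equation*}
which rearranges to the claimed first-order recursion after a brief bootstrap (invertibility of $\m M$ and smoothness of $f^{[d]}$ give $\|\v e_p^{(t+1)}\| = \cO(\|\v e_p^{(t)}\|)$, which substituted back absorbs the $\|\v e_p^{(t+1)}\|^2$ term).

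With the recursion in hand, the spectral radius bound follows from the Ostrowski--Reich theorem. For the BGS splitting, $\m M + \m M^\top - \m H$ equals the block-diagonal of $\m H$; if $\m H \succ 0$ then each diagonal block is also positive definite, so Ostrowski--Reich applies and gives $\gr(\m I - \m M^{-1}\m H) < 1$. The explicit entries of $\m H$ would be obtained by double-differentiating the closed form \eqref{eq:implcost_esp} of $f^{[d]}$, yielding formulas in ESPs of entrywise products of columns of $\m A$ and $\m V$; I would defer these formulas to \cref{apx:pf_conv}.

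The main obstacle will be showing $\m H \succ 0$ on the reduced tangent space. Identifiability (\cref{thm:wMid}) alone gives only a zeroth-order uniqueness statement and does not automatically rule out flat directions of $f^{[d]}$ at the minimum. To close this gap I would argue that the Jacobian of the moment map
\begin{equation*}
    (\v w, \m A) \,\longmapsto\, \bigl( \m P( \textstyle\sum_{j=1}^r w_j \v a_j^{\otimes d_1}),\ \m P(\textstyle\sum_{j=1}^r w_j \v a_j^{\otimes d_2})\bigr)
\end{equation*}
has full column rank at Zariski-generic $\m A^*$ modulo the simplex direction and the discrete sign/permutation symmetries. This reduces to the infinitesimal uniqueness engine that powers \cref{thm:low_rank_unique}, namely the full-rank property of flattened symmetric tensors from \cref{lem:full_rk_flat_sym}. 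At the population minimum the residuals vanish, so $\m H = 2 J^\top J$ with $J$ this Jacobian, giving $\m H \succ 0$ on the reduced space. The sample version then follows by continuity of eigenvalues since $\widehat{\t M}^i_p \to \t M^i$ almost surely, proving the $p \to \infty$ claim simultaneously.
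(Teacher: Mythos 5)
Your overall architecture matches the paper's: reduce out the simplex constraint with $\widehat{\m Q}$, obtain the error recursion from the linearization of the alternating minimization, and deduce $\gr(\m I - \m M^{-1}\m H)<1$ from positive definiteness of the reduced Hessian. The difference is that you re-derive the imported machinery from first principles (the block Gauss--Seidel stationarity/bootstrap argument for the recursion, and Ostrowski--Reich for the spectral radius), whereas the paper simply cites \cite[Thm.~2, Lem.~1--2]{bezdek2003convergence}, whose proof is essentially the argument you sketch. That part of your proposal is correct and self-contained, and you rightly flag the mismatch between the block ordering in the statement and the actual row-wise update order of \cref{alg:Mstep}, which the paper glosses over.

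The one genuine gap is your justification of $\m H \succ 0$ on the reduced space. You correctly observe that zeroth-order identifiability does not by itself exclude flat directions, but your proposed fix --- that full column rank of the Jacobian of $(\v w, \m A) \mapsto \big(\m P(\sum_j w_j \v a_j^{\otimes d_1}), \m P(\sum_j w_j \v a_j^{\otimes d_2})\big)$ ``reduces to'' \cref{lem:full_rk_flat_sym} --- does not go through as stated. That lemma controls ranks of submatrices of the flattening of $\sum_j \v a_j^{\otimes(s+t)}$, whose columns are Khatri--Rao powers $\v a_j^{\odot s}$; the Jacobian columns here are off-diagonal projections of $\operatorname{sym}(\v a_j^{\otimes d-1}\otimes \v e_k)$, a different family of vectors, so a separate genericity computation would be needed. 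The paper closes this gap by a different mechanism (\cref{lem:positiveH}): identifiability from \cref{thm:wMid} makes the parametrization map generically finite-to-one, and for polynomial maps generic finiteness of fibers already forces the Jacobian to have full column rank at generic points \cite{sommese2005numerical}; since the residual vanishes at the population optimum, the Gauss--Newton term $\m J_f^\top \m J_f$ is all of $\m H$ there, and continuity transfers positive definiteness to $(\v w_p^*, \m A_p^*)$ for $\widehat{\t M}^i_p$ close to $\t M^i$. If you replace your Jacobian-rank sketch with this finiteness-implies-full-rank argument (or supply the missing genericity computation), your proof is complete.
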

The proof is in \cref{apx:pf_conv}.  It applies \cite{bezdek2003convergence} and involves straightforward calculations.
The next result gives the convergence rate of $(\v w_p, \m A_p)$ to $(\v w^*, \m A^*)$ as $p \rightarrow \infty$.

\begin{proposition}[convergence to population] \label{prop:pconv}
    Take the setting of \cref{thm:alsconv} and assume $\widehat{\t M}_p^i \in \mathcal{O}_i$.
    Let $\m J_i \in \mathbb{R}^{{n^i} \times (r + nr)}$ 
    be the Jacobian matrix of the map $T^{i} : (\v w, \m A) \mapsto \P(\sum_{j = 1}^r w_j \v a_j^{\otimes i})$ evaluated at $(\v w^*, \m A^*)$.  
    Let
       $ \gee_p = \big{(}\sum\nolimits_{i = 1}^d \gt_i \|\widehat{\t M}^i_p - \t M^i \|^2 \big{)}^{1/2}$ 
    and define
      \begin{equation} \label{eq:sigma}
        \gs
       \, = \,
         \gs_{\min}\big{(}\sum\nolimits_{i = 1}^d \gt_i \m J_i^{\top}   \m J_i \big{)}
    \end{equation}
    where $\gs_{\min}$ is the smallest singular value.
       Then $\gs > 0$ and for $\gee_p$ sufficiently small, 
    \begin{equation} \label{eq:converge-to-population}
        \big{(}\|\v w_p^* - \v w^*\|^2 + \|\m A_p^* - \m A^*\|^2\big{)}^{1/2}
        \leq \,
        \gs^{-1/2} \gee_p + \mathcal{O}(\gee_p^2).
    \end{equation}
    Explicit formulas for the entries of $\m J_i^{\top} \m J_i$ are in \cref{apx:pf_conv}.
\end{proposition}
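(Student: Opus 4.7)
The plan is to split the argument into two parts: (i) show $\sigma > 0$, a structural fact about the Jacobians at the true parameters; (ii) derive the quantitative bound in \eqref{eq:converge-to-population} by a first-order perturbation analysis of a nonlinear least squares problem.

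For part~(i), I would consider the polynomial map $T:(\v w, \m A) \mapsto (T^{d_1}(\v w,\m A),\, T^{d_2}(\v w,\m A))$. \cref{thm:wMid} shows $T$ is locally injective at $(\v w^*, \m A^*)$ modulo the discrete sign-flip group, so its fibers near the true point are finite. Since $T$ is polynomial and $(\v w^*, \m A^*)$ is Zariski-generic, generic smoothness yields $\rank(dT_{(\v w^*,\m A^*)}) = r + nr$. Equivalently, the combined Gram matrix $\m J_{d_1}^{\top}\m J_{d_1} + \m J_{d_2}^{\top}\m J_{d_2}$ is positive definite. Adding the remaining terms $\tau_i \m J_i^{\top}\m J_i$ for $i \neq d_1,d_2$ keeps the sum positive definite, so $\sigma > 0$.

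For part~(ii), I would repackage the cost as a nonlinear least squares. Define the residual map $\Phi:\mathbb{R}^{r+nr}\to \bigoplus_{i=1}^d \mathbb{R}^{n^i}$ by
\begin{equation*}
\Phi(\v w, \m A) \,=\, \bigoplus\nolimits_{i=1}^d \sqrt{\tau_i}\, \P\big(\sum\nolimits_{j=1}^r w_j \v a_j^{\otimes i}\big),
\end{equation*}
and set $\v b_p = \bigoplus_i \sqrt{\tau_i}\, \P(\widehat{\t M}_p^i)$, $\v b^* = \bigoplus_i \sqrt{\tau_i}\, \P(\t M^i)$, so that $\Phi(\v w^*, \m A^*) = \v b^*$ and $f^{[d]}(\,\cdot\,; \m V_p) = \|\Phi(\,\cdot\,) - \v b_p\|^2$. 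The Jacobian $\widetilde{\m J}$ of $\Phi$ at $(\v w^*, \m A^*)$ satisfies $\widetilde{\m J}^{\top}\widetilde{\m J} = \sum_i \tau_i \m J_i^{\top}\m J_i$, hence $\sigma_{\min}(\widetilde{\m J})^2 = \sigma$ by part~(i). The minimizer $(\v w_p^*, \m A_p^*)$ obeys the first-order condition $(d\Phi)^{\top}(\Phi - \v b_p) = \vzero$. Invertibility of $\widetilde{\m J}^{\top}\widetilde{\m J}$ lets me apply the implicit function theorem, yielding that $(\v w_p^*, \m A_p^*)$ is a smooth function of $\v b_p$ in a neighborhood of $\v b^*$. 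A first-order Taylor expansion, equivalent to solving the linearized normal equations $\widetilde{\m J}^{\top}\widetilde{\m J}\,\delta = \widetilde{\m J}^{\top}(\v b_p - \v b^*)$, gives
\begin{equation*}
(\v w_p^*, \m A_p^*) - (\v w^*, \m A^*) \,=\, (\widetilde{\m J}^{\top}\widetilde{\m J})^{-1}\widetilde{\m J}^{\top}(\v b_p - \v b^*) + \cO(\|\v b_p - \v b^*\|^2).
\end{equation*}
Taking norms and combining $\|(\widetilde{\m J}^{\top}\widetilde{\m J})^{-1}\widetilde{\m J}^{\top}\|_{\mathrm{op}} = \sigma^{-1/2}$ with $\|\v b_p - \v b^*\|^2 \le \sum_i \tau_i \|\widehat{\t M}_p^i - \t M^i\|^2 = \eps_p^2$ (using that $\P$ is an orthogonal projection) delivers the stated bound.

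The main obstacle will be part~(i): bridging set-theoretic identifiability (\cref{thm:wMid}) to first-order tangent injectivity of the combined moment map. The generic-smoothness route is conceptually clean but borrows some algebraic geometry; an alternative is to exhibit explicitly linearly independent columns of the Jacobian using the ESP calculus of \cref{sec:implicit_normeq}. Deriving the closed-form entries of $\m J_i^{\top}\m J_i$ for the appendix should be routine once that framework is in place, following the same pattern used to evaluate the gradient of $f^{[d]}$.
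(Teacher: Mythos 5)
Your proposal is correct and follows essentially the same route as the paper: the paper also stacks the weighted, $\m P$-projected moment maps into a single nonlinear least squares residual, obtains positive definiteness of $\sum_i \tau_i \m J_i^{\top}\m J_i$ from the finite-to-one (identifiability) property of the polynomial moment map via genericity, and applies the implicit function theorem to express the minimizer as a smooth function of the data vector whose Jacobian is the pseudo-inverse $(\widetilde{\m J}^{\top}\widetilde{\m J})^{-1}\widetilde{\m J}^{\top}$, with operator norm $\sigma^{-1/2}$. The only (harmless) difference is that you explicitly note the inequality $\|\v b_p - \v b^*\| \leq \eps_p$ coming from $\m P$ being norm-nonincreasing, a point the paper passes over silently.
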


The proof is given in \cref{apx:pf_conv}.  It builds on the proof for Theorem~\ref{thm:alsconv} and further employs the implicit function theorem.
We do not attempt to bound the attraction basin for this polynomial optimization problem, i.e. how close is sufficiently close to guarantee these convergence properties.

    \section{Algorithm for General Means} \label{sec:gmeans}

This section is dedicated to solving for the
general means 
$\E_{X \sim \cD_j}[g(X)]$ where $g$ is a Cartesian product of functions: $g(\v x) = (g_1(x_1),\ldots,g_n(x_n))$, per our approach to nonparametric estimation \eqref{eq:fnal}.
We assume that
the means $\m A$ and weights $\v w$ have been estimated already.
Let $\v y_j = \E_{X \sim \cD_j} [g(X)]  \in \mathbb{R}^n$ and $\m Y = (\v y_1|\ldots|\v y_r) \in \R^{n\times r}$ be the unknowns we want to determine.

Based on \cref{thm:gmeans_id}, we aim to solve for $\m Y$ by solving the linear systems in \cref{prop:general-equations}.
Specifically we solve the following linear least squares for $\m Y$: 
\begin{align}
&  \min_{\m Y \in \mathbb{R}^{n \times r}} \sum\nolimits_{i = 1}^d \gt_i \big{\|} \P \big{(}\sum\nolimits_{\ell=1}^p (1/p) g(\v v_{\ell}) \otimes \v v_{\ell}^{\otimes i-1} - \sum\nolimits_{j=1}^r w_j \v y_j \otimes \v a_j^{\otimes i-1}\big{)} \big{\|}^2. \label{eq:general-linear} 
\end{align}
It turns out that we can solve this linear system efficiently (in a tensor-free way) using methods developed in \cref{sec:mstep}.
Notice that the linear system \eqref{eq:general-linear} is block-diagonal in the rows of $\m Y$.
Without loss of generality, suppose we are solving for the first row $\v y^1$. 
To take advantage of the learned weights and means, consider applying $g_1$ to the first row of the data $\m V$ to obtain $\v V'$.
Obviously, the rows of mean matrix for $\v V'$ are now $(\v y^1,\v a^2,\ldots,\v a^n)$.
Thus, we compute $\v y^1$ by running the row update on the mean matrix of the data set $\v V'$ with rows $2$ through $n$ in its mean matrix fixed to $(\v a^2 | \ldots | \v a^n)$.
This gives the following result for the normal equations \nolinebreak of  \nolinebreak \eqref{eq:general-linear}:

\begin{proposition} \label{prop:solve_gmeans}
    Define $\m G^{A^{(k)}\!, A^{(k)}}_s$ and $\m G^{A^{(k)},  V^{(k)}}_s$ as in \cref{prop:row_update}.
    Let 
    \begin{equation*}
        \hat{\m L}, \hat{\v r} \leftarrow \small\prepnormeqn(\{\m G^{A^{(k)},  A^{(k)}}_s\}_{s = 1}^d,~\{\m G^{ A^{(k)},  V^{(k)}}_s\}_{s = 1}^d,~\frac{g_k(\v v^k)}{p},~d-1,~\!(2\gt_2,\ldots,d\gt_d)),
    \end{equation*}
    and $\m L = \hat{\m L} + \gt_1$, $\v r = \hat{\v r} + \gt_1  \mu_k$, where $\v \mu$ is the mean vector of $g(\m V)$ ($g$ is applied to each column of $\m V$).
    Then the normal equations for $\v \gb^k = \v w * \v y^k \in \mathbb{R}^r$ in \eqref{eq:general-linear} are given by $\m L \v \gb^k = \v r$.
\end{proposition}

This is a natural extension of \cref{prop:row_update} on updating the rows of the mean matrix.  Indeed, \cref{prop:row_update} is the special case of $g(\v x) = \v x$. 
The proof of \cref{prop:solve_gmeans} 
is identical to that of \cref{prop:row_update}, and we do not reiterate it.
We summarize the procedure of solving for the general means in \cref{alg:gmom}.

\begin{algorithm}
    \begin{algorithmic}[1]
        \caption{Solve for $\v y_j = \E_{X \sim\cD_j} [g(X)]$ given means and weights} \label{alg:gmom} 
        \Function{solveGeneralMean}{function $g$, $\{\m G^{A, A}_s\}_{s = 1}^d$, $\{\m G^{A, V}_s\}_{s = 1}^d$, data $\m V$, weights $\v w$, means $\m A$, order $d$, hyperparameters $\v \gt$}
        
            \State Compute $\m V_g \gets g(\m V)$ and column mean $\v \mu$ of $\m V_g$
            \For{$k = 1, \ldots, n$} \Comment\textbf{solve $\v y^k$}
                \For{$s = 1, ..., d$}
                    \Comment \textbf{rank-1 deflation}
                    \State 
                    $\m G^{A^{(k)}, A^{(k)}}_s \gets \m G^{A, A}_s - (\v a^k)^{*s} \left((\v a^k)^{*s}\right)^\top$
                    \State 
                    $\m G^{A^{(k)}, V^{(k)}}_s \gets \m G^{A, V}_s - (\v a^k)^{*s} \left((\v v^k)^{*s}\right)^\top$
                \EndFor
                \State $\v \pi \gets \m V_g(k, :)/p$
                \State $\m Y(k, :) \gets \solverow(
                    k,
                    \{\m G^{A^{(k)}, A^{(k)}}_s\}_{s = 1}^d, 
                    \{\m G^{A^{(k)}, V^{(k)}}_s\}_{s = 1}^d,
                    \v w, \v \pi, \v \mu, d, \v \gt
                )$
            \EndFor
        \EndFunction
        \Return $\m Y$
    \end{algorithmic}
\end{algorithm}

\begin{lemma} \label{prop:gmom_cost}
    Assume $d = \cO(1)$ and $g(\m V)$ is computed in $\cO(np)$ time. 
    Then \cref{alg:gmom} computes the general means $\m Y$  in $\cO(npr + nr^3)$ flops and $\cO(r(n + p))$ storage.
    Moreover, \cref{alg:gmom} can compute $I$ different general means $\m y^{(i)}_j = \E_{X \sim \cD_j}[g^{(i)}(X)]$ for $i = 1,\ldots,I$ in $\cO(Inpr + nr^3)$ time. 
\end{lemma}

\begin{proof} 
    The first part ($I = 1$) is clear.  For a single function $g$, \cref{alg:gmom} has the same order of flop count as the mean update \cref{alg:Mstep}. Hence, the first assertion follows.
    For the second part, the complexity appears to be $\cO(I(npr + nr^3))$. 
    However we can reduce the time from $\cO(Inr^3)$ down to $\cO(nr^3)$, because the $\m L$ matrix in the normal equation is identical for all $I$ general mean solves (it only depends on $\m A$). Thus, we only need $\cO(nr^3)$ time to solve $\m L \v \gb^k = \v r$ for $I$ different $\v r$ vectors.
\end{proof}

Parallel to \cref{prop:invertL_w} and \ref{prop:invertL_a}, we prove that the linear system \eqref{eq:general-linear} is generically full rank and thus has a unique solution (independent of the function $g$).

\begin{proposition} \label{prop:full-rank-general}
Assume $\gt_d > 0$ and $\m A^{(k)}$ is Zariski-generic, where $\m A^{(k)}$ denotes $\m A$ with the $k$th row removed.  Then the matrix $\m L \in \mathbb{R}^{r \times r}$ for the normal equations in \cref{prop:solve_gmeans} is invertible, provided $r \leq \binom{n-1}{d-1}$.
\end{proposition}

In particular,
taking $g(\v x) = \ind{\v x \leq \v t}$ lets us solve for cumulative distribution function of the components $\mathcal{D}_j$ in the mixture.
The proof is identical to that of \cref{prop:invertL_a} so we do not repeat it.

\begin{remark}
We can easily impose convex constraints on the each row of the general means matrix $\m Y$. 
For example, for estimation of components' cumulative density function we want $\v y^k \in [0, 1]^r$, whereas for second moments, we want $\v y^k \geq (\v a^k)^{*2}$.
Including such constraints, each solve of $\v y^k$ 
becomes a convex quadratic program. 
They are handled similarly to how we dealt with the simplex constraint on $\v w$ in \cref{sec:wstep}.
\end{remark}

    \section{Improving the Baseline ALS Algorithm}\label{sec:improve}
We present practical recommendations and enhancements to the baseline ALS \cref{alg:basicALS} for the means and weights.
Since we are optimizing a nonconvex cost function in ALS, the main goals are to accelerate the convergence as well as avoid bad numerical behavior and bad local minima.
We list the recommendations and brief ideas here.
More details can be found in \cref{apx:practice}.  

\begin{itemize}
    \item \textbf{Anderson Acceleration (AA).} This is a standard acceleration routine for optimization algorithms to accelerate terminal convergence. 
    See \cite{walker2011anderson} for details.
    We use a short descent history, instead of just the current state, to predict a better descent.
    Gradient information is necessary, and we show the gradients can be evaluated implicitly with the same order of cost (see \cref{apx:anderson}).

    \item \textbf{A Drop-One Procedure.} To avoid bad local minima, we alter the cost function at each iteration by changing the $\v \gt$ coefficients.  This changes the critical points.
    However identifiability theory (\cref{subsec:identify}) guarantees that the only common global minimum for (almost) all choices of $\v \gt$ is our means and weights.
    To optimize the efficiency, we set some $\gt_i$ to zero in each iteration, where $i$ is chosen according to the gradient information (see \cref{apx:dropone}).

    \item \textbf{Blocked ALS Sweep.} Instead of solving one row of the mean matrix $\m A$ at a time, we can solve for $m > 1$ rows at once when the other $n-m$ rows fixed.
    In many practical problems $r \ll n^{\floor{d/2}}$, so we still have an identifiability guarantee while holding only $n-m$ rows fixed.
    This accelerates the ALS row sweep, and by randomly choosing the $m$ rows to update, it also introduces randomness to avoid local minima in the descent (see \cref{apx:blockALS}).

    \item \textbf{Restricting the Parameters.} At the beginning of the descent, we regularize the weights by enforcing $w_j \geq c > 0$ for some choice of $c$ and also the means by restricting them to the circumscribing cube of the data cloud.
    This can avoid pathological iterates with bad initializations (see \cref{apx:restr_param}).
\end{itemize}

We apply drop-one, blocked ALS sweep, and parameter restriction at early stages of the descent (the warm-up stage), and AA in the main stage.
Putting them together we obtain the improved version of ALS called \nolinebreak \ALS\!\!\! summarized in \cref{alg:finalALS}.  We implemented \cref{alg:finalALS} for the large-scale tests in the next section.

\begin{algorithm}
    \begin{algorithmic}[1]
        \caption{Full \ALS algorithm for solving means and weights}
            \label{alg:finalALS}
            \Function{ImprovedSolveMeanAndWeight}{data $\m V$, initialization $(\v w, \m A)$, order $d$, hyperparameter $\v \gt$, number of warm-up steps  $n_1$}
            \State $n_{\text{iter}} \gets 0$
            \State Compute $\{\m G^{A, A}_s\}_{s = 1}^d$, $\{\m G^{A, V}_s\}_{s = 1}^d$
            \While{not converged}
                \If{$n_{\text{iter}} < n_1$} 
                \Comment \textbf{warm-up stage}
                    \State Run Drop-One. Select $i$ and set $\gt_{i} \gets 0$ 
                    \State Do \parbox[t]{0.8\linewidth}{blocked ALS sweep to update $\m A$ by randomly grouping the rows. \\Restrict each update to the circumscribing cube of data.}
                    \State Reset $\gt_i$ to its original value.
                    \State Update weights by $\wstep$ enforcing   lower bound $w_j \geq c$.
                \Else
                \Comment \textbf{main stage}
                    \State Update means by $\mstep$
                    \State Update weights by $\wstep$
                    \State Run AA to further update the weights and means
                \EndIf
                \State $n_{\text{iter}} \gets n_{\text{iter}} + 1$
                \vspace{1ex}
            \EndWhile
            \EndFunction
        \Return $\v w$, $\m A$
    \end{algorithmic}
\end{algorithm}

\begin{remark}
    As mentioned above, we show in \cref{apx:deriv} that the gradients of $f^{[d]}$ can be evaluated efficiently in a tensor-free way, with the same order of computation and storage costs as in ALS.
    Thus, an alternative optimization approach is to use gradient descent based algorithms on $f^{[d]}$.
    We compared ALS with the BFGS algorithm, and ALS was faster in our tests.
\end{remark}

    \section{Numerical Experiments} \label{sec:exp}

This section demonstratew the performance of \ALS (\cref{alg:finalALS}) and the general means solver (\cref{alg:gmom}) on various numerical tests.\footnote{Implementations of our algorithms in Python are available at \url{https://github.com/yifan8/moment_estimation_incomplete_tensor_decomposition}} 
In \cref{sec:density}, we apply the pipeline outlined in \cref{sec:contrib} to estimate the component distributions using our algorithm.
In \cref{sec:mix_param_test}, we test the algorithm on mixtures of gamma and Bernoulli distributions.
We also test a ``heterogeneous" mixture model, where features in the data come from different distributions.
We generate data from these distributions  but do not use knowledge of the distribution type.  Comparisons are to the EM algorithm which knows these distribution types.
In \cref{sec:xfel}, \ALS is tested on a larger-scale synthetic dataset modeling a biomolecular imaging application, X-Ray Free Electron Lasers (XFEL) \cite{huang2007review}.  
The task is computing denoised class averages from noisy XFEL images, and the simulated noise is from a mixture of Poissons which is standard in XFEL literature \cite{ayyer2016dragonfly}.   
We compare \ALS, running nonparametrically, to the EM algorithm knowing the distribution information.
In \cref{sec:usps}, we demonstrate the robustness of our approach
on a real dataset \cite{usps}.  
The dataset concerns handwritten digit recognition, where conditional independence in the mixture components definitely fails.  
Nonetheless \ALS does a reasonable job, with results  similar to those obtained by $\textup{k-means}^{++}$.

For the tests on parametric mixtures  
we use error metrics defined in terms of the weights, means and higher moments computed from the sample.  
For means, this is
\begin{equation} \label{eq:error-metric}
    \texttt{error\_means} \,\, = \,\, \min_{\Pi } \, \| \m{{A}} \, \Pi - \m{A}_{\textup{sample}}  \|_F^2  \big{/}  \| \m{A}_{\textup{sample}} \|_F^2,
\end{equation} 
where $\m A$ is the computed mean matrix, $\m A_{\text{sample}}$ is the true mean matrix of the sample, and $\gP$ ranges over all column permutations.
The metrics used for the weights and higher moments are analogous (except that the permutation matrix $\Pi$ is fixed as the minimizer for the means in \eqref{eq:error-metric}).  
These error metrics measure how well algorithms perform conditional on the given data, excluding the influence of the random sampling which no algorithm can control anyways.

In the synthetic experiments, we assume the ground truth $r$ is known. 
We always center the data to mean 0 and scale it so that $\var(\v v^k) = 1$ for all $k \in [n]$ to improve numerical stability.
The hyperparameters $\gt_i$ in $f^{[d]}$ are chosen to be inversely proportional to the number of entries in the off-diagonal of $i$th residual tensor
\begin{equation*}
    \gt_i = (n-i)!/n!.
\end{equation*}
As a stopping criterion,
we require the relative $\ell_2$-changes in $\v w$ and $\m A$ to both drop below $xtol = 10^{-4}$ as the stopping criterion.  
Other hyperparameters in our algorithms are held constant, and not tuned for performance; their values are tabulated in \cref{apx:hyper}.
The EM algorithm for non-Gaussian mixtures we compare to is from Python package \textit{pomegranate}.\footnote{\url{https://pomegranate.readthedocs.io}, version 0.14.6, see \cite{schreiber2017pomegranate}}
All experiments are performed on a personal computer with an Intel Core i7-10700K CPU and enough memory.
Our algorithms are implemented in Python 3.8 with \textit{scipy}\footnote{\url{https://scipy.org}, version 1.8.0} and \textit{numpy}.\footnote{\url{https://numpy.org}, version 1.20.3}
All quadratic programs 
are solved by the Python package \textit{qpsolvers}.\footnote{\url{https://pypi.org/project/qpsolvers}, version 1.6.1}

From the experimental results, we see a clear takeaway:  
\textit{the ALS framework is at lower risk of converging to a bad local minimum than EM}.
Although \ALS tends to be slower, if we run EM with multiple initializations to have a total runtime on par with \ALS \! then our algorithm still exhibits better errors for bigger problems.

\subsection{Estimation of Component Distributions}\label{sec:density}

We take $n = 10$ and $r = 3$.  Each component follows a diagonal Gaussian distribution.
The weights $\v w$ are sampled uniformly on $[1, 5]^r$ and then normalized.
Mean vectors $\v a_j$ are sampled from the standard normal density in $\R^n$.
The vector of standard deviations in each component $\v \gs_j$ is sampled from the standard normal distribution in $\R^n$ and then absolute values are taken.
We cap the standard deviation from below at 0.1 to avoid pathological behavior.
Forty thousand samples are generated.
We use first $d = 4$ moments in the \ALS algorithm.  The goal is to then use \cref{alg:gmom} to estimate cumulative density functions (cdf) of the component distributions.

First we solve for the means and weights by \ALS.  Then we compute the standard deviations by solving the second moment $\v m_j^2$ from general means $\E_{X\sim\cD_j}[X^{*2}]$.
We impose the constraint $\v m_j^2 \geq (\v a_j)^{*2}$.
Last we estimate the cumulative distribution functions of each feature in each component 
by evaluating the general means 
\begin{gather*}
    \Big(\pr_{X\sim\cD_j}(X_1 \leq t_1),\ldots,\pr_{X\sim\cD_j}(X_n \leq t_n)\Big) = \E_{X\sim\cD_j}[g(X)],\\
    \text{where} \ \ 
    g(\v x) = (\ind{x_1\leq t_1},\ldots,\ind{x_n\leq t_n}).
\end{gather*}
Denote the computed means and standard deviations from \cref{alg:finalALS} and the general means solver by $\hat{\v a}_j$ and $\hat{\v \gs}_j$.
We evaluate the cdf of the $j$th component at $\v t = \hat{\v a}_j + k \hat{\v\gs}_j$ for $k = -2, -1.5,\ldots, 1.5, 2$.

Figure \ref{fig:density} shows the results including both the accuracy of estimating the means and standard deviations and the accuracy of the distribution estimation.
We see that $\v \a_j$, $\v \gs_j$, and the distributions are accurately estimated.  Notably also, the grid on which distribution estimations performed is adapted to the shape of the cdf. 

\begin{figure}[!htbp]
    \centering
    \includegraphics[width = \linewidth]{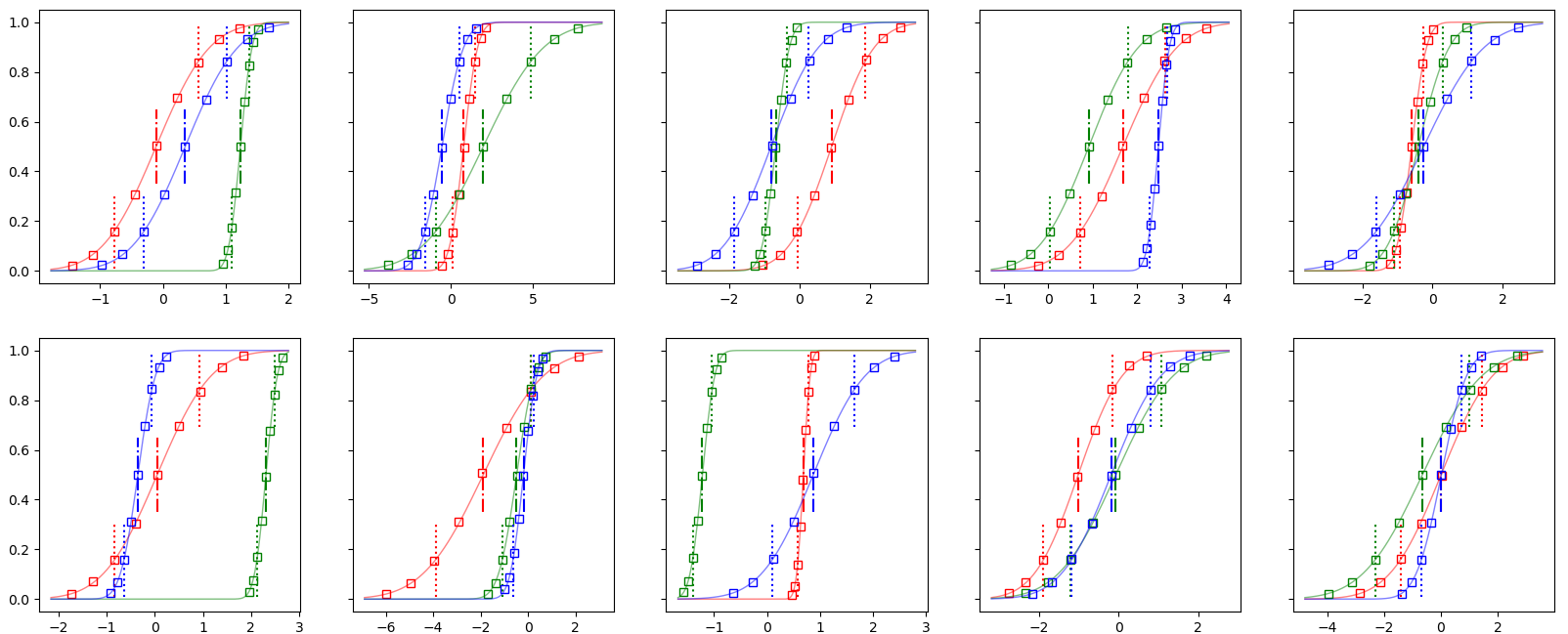}
    \caption{Results on 10 coordinates of the 3 Gaussian mixture components. Each subplot corresponds to a single coordinate (feature). Colors correspond to different components.
    The curves indicate the true cdf. 
    In the $i$th subplot,
    the dashed and dotted vertical lines indicate the location of the true means $(\v a_j)_i$ and $(\v a_j)_i \pm (\v \gs_j)_i$. 
    This defines the grid on which localized density estimation is performed.
    The squares are placed at $(\hat{\v a}_j)_i + k (\hat{\v \gs}_j)_i$, with $y$-value being the computed cdf values.}  
    \label{fig:density}
\end{figure}

\subsection{Performance on Parametric Mixture Models} \label{sec:mix_param_test}

For each parametric model and $(n,r)$ pair below, we run $20$ independent simulations. 
Ground-truth parameters are randomly generated per simulation. 
The weights $\v w$ are sampled uniformly on $[1, 5]^r$ and then normalized, and the other parameters are sampled by a procedure described below for each model.
Then $p=20000$ data points are drawn from the resulting mixture.  
First we apply \ALS using the first $d = 4$ moments of the data to  compute the means and weights.  
Then we use \cref{alg:gmom} to compute the second moments of the mixture components (except in \cref{sec:cubemix} where such information is redundant).
In the second moment solve, we impose the constraint $\v m_j^2 \geq \v a_j^{*2} + 10^{-4}$ for all $j \in [r]$, which makes the linear solve $\m L^{-1} \v r$ in \cref{alg:gmom} into a quadratic program.

We compare \ALS to the EM algorithm.
For each EM run, we initialize with 10 $\textup{k-means}^{++}$ runs, and use the best result as the starting point for EM.
We call the resulting procedure the ``EM1'' algorithm.
Since EM is known to converge fast, but is prone to local minima, when EM1 runs much faster than our algorithm, 
we repeat EM1 ten times and keep the best result.
This is called the ``EM10'' algorithm.
The tolerance level of the EM algorithm is left at the default value $10^{-1}$ in \textit{pomegranate}.
The accuracy of the results does not appear to be sensitive to the tolerance.

\subsubsection{Mixture of Bernoullis} \label{sec:cubemix}

The Bernoulli mixture model is a natural way of encoding true or false responses and other binary quantities. 
It is studied in theoretical computer science under the name mixture of subcubes \cite{chen2019beyond}.
A Bernoulli in $\mathbb{R}^n$ with independent entries has probability mass function:
\begin{equation} \label{eq:bernoulli}
    p(X = \v x | \v \mu) \, = \, \prod\nolimits_{i = 1}^n \left( \ind{x_i = 1} \mu_i + \ind{x_i = 0} (1 - \mu_i) \right)
\end{equation}
where $\v \mu \in [0,1]^n$. 
The true mean vectors $\v \mu$ are sampled i.i.d. uniformly from the cube $[0, 1]^n$.
EM10 with the \pom implementation is used for comparison.

The results are shown in \cref{tab:subcubeALS}.  Here the timing between \ALS and EM10 is comparable.
We see that the \ALS algorithm shows consistently good accuracy om high dimensions, whereas EM10 is less likely to converge to the true solutions.

\begin{table}[htbp] 
    \setlength{\tabcolsep}{4.5pt}
    \ssmall
    \caption{Results for the Bernoulli mixture tests, comparing  \cref{alg:finalALS} ($p=20000, d=4$)  to \pom EM. 
    }
    \label{tab:subcubeALS}
    \begin{center}
        \begin{tabular}{c|c|cccc|cccc|cccc}
            \hline

            \multirow{3}{*}{n}  & \multirow{3}{*}{r} & \multicolumn{4}{c|}{Weight (\%)}                                                                                              & \multicolumn{4}{c|}{Mean (\%)}                                                                                                & \multicolumn{4}{c}{Time (s)}                                                             \\ \cline{3-14} 
                                &                    & \multicolumn{2}{c|}{Ours}                                   & \multicolumn{2}{c|}{EM10}                                       & \multicolumn{2}{c|}{Ours}                                   & \multicolumn{2}{c|}{EM10}                                       & \multicolumn{2}{c|}{Ours}                            & \multicolumn{2}{c}{EM10}          \\ \cline{3-14} 
                                &                    & Ave                            & \multicolumn{1}{c|}{Worst} & Ave                            & Worst                          & Ave                            & \multicolumn{1}{c|}{Worst} & Ave                            & Worst                          & Ave            & \multicolumn{1}{c|}{Worst}          & Ave             & Worst           \\ \hline
            \multirow{3}{*}{15} & 3                  & 0.51                           & \multicolumn{1}{c|}{1.88}  & \textbf{0.41} & \textbf{1.03} & 0.57                           & \multicolumn{1}{c|}{1.11}  & \textbf{0.37} & \textbf{0.77} & \textbf{2.58}  & \multicolumn{1}{c|}{\textbf{3.65}}  & 3.66            & 5.69            \\ \cline{2-14} 
                                & 6                  & 2.20                           & \multicolumn{1}{c|}{3.85}  & \textbf{1.32} & \textbf{2.27} & 1.47                           & \multicolumn{1}{c|}{2.86}  & \textbf{0.98} & \textbf{1.48} & \textbf{7.29}  & \multicolumn{1}{c|}{15.34}          & 8.87            & \textbf{13.51}  \\ \cline{2-14} 
                                & 9                  & 3.27                           & \multicolumn{1}{c|}{6.69}  & \textbf{2.27} & \textbf{3.31} & 2.08                           & \multicolumn{1}{c|}{3.23}  & \textbf{1.45} & \textbf{2.35} & 16.38          & \multicolumn{1}{c|}{36.36}          & \textbf{16.32}  & \textbf{25.65}  \\ \hline
            \multirow{3}{*}{30} & 6                  & 0.63                           & \multicolumn{1}{c|}{1.45}  & \textbf{0.27} & \textbf{0.63} & 0.72                           & \multicolumn{1}{c|}{0.96}  & \textbf{0.30} & \textbf{0.51} & \textbf{7.90}  & \multicolumn{1}{c|}{11.76}          & 8.93            & \textbf{11.54}  \\ \cline{2-14} 
                                & 12                 & 1.74                           & \multicolumn{1}{c|}{3.41}  & \textbf{0.58} & \textbf{0.94} & 1.45                           & \multicolumn{1}{c|}{1.78}  & \textbf{0.57} & \textbf{0.72} & 26.46          & \multicolumn{1}{c|}{38.28}          & \textbf{25.59}  & \textbf{34.10}  \\ \cline{2-14} 
                                & 18                 & 3.02                           & \multicolumn{1}{c|}{\textbf{5.58}}  & \textbf{2.22} & 8.55                           & \textbf{2.12} & \multicolumn{1}{c|}{\textbf{2.48}}  & 4.43                           & 19.59                          & \textbf{46.51} & \multicolumn{1}{c|}{65.30}          & 46.61           & \textbf{60.08}  \\ \hline
            \multirow{3}{*}{50} & 10                 & 0.65                           & \multicolumn{1}{c|}{0.99}  & \textbf{0.13} & \textbf{0.27} & 0.85                           & \multicolumn{1}{c|}{1.00}  & \textbf{0.13} & \textbf{0.18} & 29.15          & \multicolumn{1}{c|}{39.16}          & \textbf{25.36}  & \textbf{37.08}  \\ \cline{2-14} 
                                & 20    & 1.89  & \multicolumn{1}{c|}{\textbf{2.81}}  & \textbf{1.19} & 7.85 & \textbf{1.65} & \multicolumn{1}{c|}{\textbf{1.87}} & 2.65 & 16.73 & \textbf{68.39} & \multicolumn{1}{c|}{\textbf{95.60}} & 78.01  & 101.60          \\ \cline{2-14}
                                & 30    & \textbf{3.64} & \multicolumn{1}{c|} {\textbf{5.67}}  & 3.72  & 9.20  & \textbf{2.42} & \multicolumn{1}{c|}{\textbf{3.14}}  & 9.28  & 15.69 & 149.3& \multicolumn{1}{c|}{265.41}& \textbf{144.47} & \textbf{179.03} \\ \hline
        \end{tabular}
    \end{center}
\end{table}

\subsubsection{Mixture of Gammas} \label{sec:gammamix}

The gamma mixture model, including mixture of exponentials, is a popular choice for modeling positive continuous signals \cite{wiper2001mixtures}.
A gamma-distributed random variable in $\mathbb{R}^n$ with independent entries has density: 
\begin{equation}
    p(X = \v x | \v k, \v \theta) \propto \prod\nolimits_{i = 1}^n x_i^{k_i-1} e^{-x_i/\theta_i}
\end{equation}
where $\v k \in (\mathbb{R}_{>0})^n$ and $\v \theta \in (\mathbb{R}_{>0})^n$ (using the shape-scale parametrization convention).
The scale parameters $\v \theta$ are sampled uniformly from $[0.1, 5]^n$, and the shape parameters $\v k$ are sampled  uniformly from $[1, 5]^n$.
As opposed to the previous tests, the EM algorithm takes a longer time to run compared to the \ALS algorithm for larger problems.
In light of this, we only compare to EM1.

The results are listed in \cref{tab:gamma}. 
The EM algorithm is struggling in this case even in low-dimensional problems.
In higher dimensions, the \ALS algorithm offers consistently good accuracy as well as better running times.

\begin{table}[htbp]
    \footnotesize
    \caption{Results for the Gamma mixture tests, comparing \cref{alg:finalALS} and \cref{alg:gmom} ($p=20000, d=4$) to \pom EM.
    }
    \label{tab:gamma}
    \begin{center}

    \begin{tabular}{c|c|cccc||cccc}
        \hline
        \multirow{3}{*}{n}  & \multirow{3}{*}{r} & \multicolumn{4}{c||}{Weight (\%)}                                                   & \multicolumn{4}{c}{Mean (\%)}                                                           \\ \cline{3-10} 
                            &                    & \multicolumn{2}{c|}{Ours}                           & \multicolumn{2}{c||}{EM1}      & \multicolumn{2}{c|}{Ours}                              & \multicolumn{2}{c}{EM1}         \\ \cline{3-10} 
                            &                    & Ave           & \multicolumn{1}{c|}{Worst}         & Ave           & Worst         & Ave            & \multicolumn{1}{c|}{Worst}           & Ave            & Worst          \\ \hline
        \multirow{3}{*}{15} & 3                  & \textbf{0.36} & \multicolumn{1}{c|}{\textbf{1.13}} & 4.00          & 41.02         & \textbf{0.41}  & \multicolumn{1}{c|}{\textbf{0.60}}   & 4.74           & 51.96          \\ \cline{2-10} 
                            & 6                  & \textbf{0.85} & \multicolumn{1}{c|}{\textbf{1.60}} & 5.57          & 26.46         & \textbf{0.77}  & \multicolumn{1}{c|}{\textbf{1.15}}   & 10.17          & 39.56          \\ \cline{2-10} 
                            & 9                  & \textbf{1.44} & \multicolumn{1}{c|}{\textbf{2.95}} & 8.21          & 23.85         & \textbf{1.13}  & \multicolumn{1}{c|}{\textbf{1.44}}   & 15.85          & 42.95          \\ \hline
        \multirow{3}{*}{30} & 6                  & 0.35          & \multicolumn{1}{c|}{0.56}          & \textbf{0.00} & \textbf{0.02} & 0.47           & \multicolumn{1}{c|}{0.61}            & \textbf{0.00}  & \textbf{0.02}  \\ \cline{2-10} 
                            & 12                 & \textbf{0.83} & \multicolumn{1}{c|}{\textbf{1.52}} & 8.87          & 23.04         & \textbf{0.80}  & \multicolumn{1}{c|}{\textbf{0.97}}   & 20.48          & 33.51          \\ \cline{2-10} 
                            & 18                 & \textbf{1.26} & \multicolumn{1}{c|}{\textbf{1.70}} & 15.65         & 34.64         & \textbf{1.11}  & \multicolumn{1}{c|}{\textbf{1.34}}   & 26.70          & 32.46          \\ \hline
        \multirow{3}{*}{50} & 10                 & \textbf{0.42} & \multicolumn{1}{c|}{\textbf{0.63}} & 7.68          & 24.86         & \textbf{0.52}  & \multicolumn{1}{c|}{\textbf{0.61}}   & 15.34          & 29.75          \\ \cline{2-10} 
                            & 20                 & \textbf{0.77} & \multicolumn{1}{c|}{\textbf{1.00}} & 13.03         & 27.54         & \textbf{0.89}  & \multicolumn{1}{c|}{\textbf{1.04}}   & 26.80          & 35.23          \\ \cline{2-10} 
                            & 30                 & \textbf{1.13} & \multicolumn{1}{c|}{\textbf{1.44}} & 13.46         & 29.21         & \textbf{1.19}  & \multicolumn{1}{c|}{\textbf{1.34}}   & 26.18          & 34.94          \\ \hline\hline
        \multirow{3}{*}{n}  & \multirow{3}{*}{r} & \multicolumn{4}{c||}{2nd Moment (\%)}                                               & \multicolumn{4}{c}{Time (s)}                                                            \\ \cline{3-10} 
                            &                    & \multicolumn{2}{c|}{Ours}                           & \multicolumn{2}{c||}{EM1}      & \multicolumn{2}{c|}{Ours}                              & \multicolumn{2}{c}{EM1}         \\ \cline{3-10} 
                            &                    & Ave           & \multicolumn{1}{c|}{Worst}         & Ave           & Worst         & Ave            & \multicolumn{1}{c|}{Worst}           & Ave            & Worst          \\ \hline
        \multirow{3}{*}{15} & 3                  & \textbf{0.80} & \multicolumn{1}{c|}{\textbf{1.34}} & 6.77          & 73.64         & 2.23           & \multicolumn{1}{c|}{2.98}            & \textbf{0.80}  & \textbf{2.47}  \\ \cline{2-10} 
                            & 6                  & \textbf{1.36} & \multicolumn{1}{c|}{\textbf{2.06}} & 13.61         & 57.15         & 5.87           & \multicolumn{1}{c|}{13.79}           & \textbf{2.42}  & \textbf{11.58} \\ \cline{2-10} 
                            & 9                  & \textbf{2.00} & \multicolumn{1}{c|}{\textbf{2.98}} & 22.05         & 68.94         & 14.11          & \multicolumn{1}{c|}{45.72}           & \textbf{5.15}  & \textbf{15.87} \\ \hline
        \multirow{3}{*}{30} & 6                  & 0.92          & \multicolumn{1}{c|}{1.26}          & \textbf{0.33} & \textbf{0.39} & 7.57           & \multicolumn{1}{c|}{19.49}           & \textbf{1.98}  & \textbf{2.51}  \\ \cline{2-10} 
                            & 12                 & \textbf{1.49} & \multicolumn{1}{c|}{\textbf{1.82}} & 27.32         & 43.93         & 21.52          & \multicolumn{1}{c|}{\textbf{28.69}}  & \textbf{21.22} & 51.54          \\ \cline{2-10} 
                            & 18                 & \textbf{2.06} & \multicolumn{1}{c|}{\textbf{2.38}} & 36.52         & 50.61         & \textbf{39.93} & \multicolumn{1}{c|}{\textbf{59.87}}  & 43.19          & 66.99          \\ \hline
        \multirow{3}{*}{50} & 10                 & \textbf{1.02} & \multicolumn{1}{c|}{\textbf{1.43}} & 20.66         & 41.00         & \textbf{23.07} & \multicolumn{1}{c|}{\textbf{29.82}}  & 32.11          & 115.16         \\ \cline{2-10} 
                            & 20                 & \textbf{1.71} & \multicolumn{1}{c|}{\textbf{2.05}} & 35.40         & 47.53         & \textbf{51.91} & \multicolumn{1}{c|}{\textbf{72.44}}  & 123.54         & 245.30         \\ \cline{2-10} 
                            & 30                 & \textbf{2.24} & \multicolumn{1}{c|}{\textbf{2.67}} & 35.68         & 48.43         & \textbf{93.29} & \multicolumn{1}{c|}{\textbf{162.87}} & 167.09         & 327.55         \\ \hline
    \end{tabular}
    \end{center}
\end{table}

\subsubsection{A Heterogeneous Mixture Model} \label{sec:generalmix}

An important application is to estimate the weights, means, and other quantities of interest from a dataset composed of different data types. 
For example, a dataset can have some features encoded by booleans (e.g. responses to True/False questions), some by elements of a finite set (e.g. ranking or scoring), others that are continuous (e.g. Gaussian), etc.

We illustrate this by
setting $n = 40, r = 20$ and using four mixture types: Bernoulli; discrete distribution (supported on $\{1, 2, 3, 4, 5\}$); Gaussian; and Poisson.  We allocate ten features to each data type. 
The parameters are generated as follows:  
\begin{itemize}\setlength\itemsep{0.2em}
    \item For the Bernoulli mixture \eqref{eq:bernoulli}, the means $\v \mu$  are uniformly from $[0, 1]^{10}$.
    \item For the discrete mixture,  one discrete distribution has mass function:
    \begin{equation}
        p(X = \v x | \m \alpha) = \prod\nolimits_{i = 1}^{10}
        \big(\sum\nolimits_{k = 1}^5\ind{x_i = k}\alpha_{ik}\big).
    \end{equation}
    The different matrices $\m \alpha$ are drawn uniformly from $[0,1]^{10 \times 5}$ and then normalized to be row-stochastic.
    \item For the Gaussian mixture, the means $\v \mu$ are sampled from a standard Gaussian, and the standard deviations $\v \sigma$ are uniformly from  $[0, \sqrt{10}]^{10}$;
    \item For the Poisson mixture,  one Poisson distribution has density:
    \begin{equation} \label{eq:poisson}
p(X = \v x | \v \lambda ) \, = \, \prod\nolimits_{i = 1}^{10} \frac{\lambda_i^{x_i} e^{-\lambda_i}}{x_i!}.    \end{equation}
    The mean vectors $\v \lambda$
   are sampled uniformly from  $[0, 5]^{10}$.
\end{itemize}

We benchmark against \pom EM, which is given each feature's data type. 
The results are in \cref{tab:genALS}. 
With 10 descents,  EM10 takes a longer time than \ALS, and there are five out of the twenty simulations where EM10 gives a poor answer (the relative error in the means or the weights exceeds 10\%).
By contrast, the \ALS algorithm is more reliable, giving a much better worst-case performance.

\setlength{\tabcolsep}{2.8pt}
\begin{table}[htbp]
    \begin{center}
    \footnotesize
    \caption{Results for the heterogeneous mixture tests, comparing  \cref{alg:finalALS} and \cref{alg:gmom} ($p= 20000, n=40, r =20, d=4$)  to \pom EM.
    }
    \label{tab:genALS}
    \begin{tabular}{cccc||cccc}
        \hline
        \multicolumn{4}{c||}{Weight (\%)}                                               & \multicolumn{4}{c}{Mean (\%)}                                                    \\ \hline
        \multicolumn{2}{c|}{Ours}                           & \multicolumn{2}{c||}{EM10} & \multicolumn{2}{c|}{Ours}                              & \multicolumn{2}{c}{EM10} \\ \hline
        Ave           & \multicolumn{1}{c|}{Worst}         & Ave             & Worst   & Ave            & \multicolumn{1}{c|}{Worst}           & Ave         & Worst      \\ \hline
        3.41          & \multicolumn{1}{c|}{\textbf{5.15}} & \textbf{2.13}   & 10.44   & \textbf{2.54}  & \multicolumn{1}{c|}{\textbf{3.10}}   & 4.49        & 19.93      \\ \hline\hline
        \multicolumn{4}{c||}{2nd Moment (\%)}                                           & \multicolumn{4}{c}{Time (s)}                                                     \\ \hline
        \multicolumn{2}{c|}{Ours}                           & \multicolumn{2}{c||}{EM10} & \multicolumn{2}{c|}{Ours}                              & \multicolumn{2}{c}{EM10} \\ \hline
        Ave           & \multicolumn{1}{c|}{Worst}         & Ave             & Worst   & Ave            & \multicolumn{1}{c|}{Worst}           & Ave         & Worst      \\ \hline
        \textbf{3.27} & \multicolumn{1}{c|}{\textbf{4.06}} & 5.76            & 23.05   & \textbf{92.58} & \multicolumn{1}{c|}{\textbf{173.53}} & 229.61      & 353.14     \\ \hline
    \end{tabular}
    \end{center}
\end{table}
\setlength{\tabcolsep}{6pt}

We also tried to benchmark with the semiparametric EM algorithm proposed by Chauveau \cite{chauveau2016nonparametric}, using its implementation in the R package \textit{mixtools}.\footnote{\url{https://cran.r-project.org/web/packages/mixtools/index.html}, see \cite{mixtools}}
However, the R function was not able to finish the solve in a comparable amount of time on this setup.
On the smaller dimensions  $n = 8$, $r = 4$, $p = 5000$, it  took $\sim20$ seconds per  iteration.

\subsection{Application: XFEL} \label{sec:xfel}
We test the ALS framework on a simulation of a scientific application.  
In microscopy, one wants to compute the 3D map of a molecule from several noisy 2D images.  
Typically this starts with a step of 2D image clustering and denoising, known as class averaging.  
This is well-studied for cryo-electron microscopy \cite{scheres2015semi}.  
We consider it for X-Ray Free Electron Lasers (XFEL) \cite{huang2007review}, where methods for class averaging are not yet widely available.

The molecule is described by a 3D scalar field, $\phi(x,y,z)$.  
The $j$th 2D image is ideally modeled as a noisy and pixelated variant of:
\begin{equation} \label{eq:XFEL-Ij}
I_j := |\mathcal{P} \mathcal{F}(\phi \circ R_j) | : \mathbb{R}^2 \rightarrow \mathbb{R},
\end{equation}
where $R_j$ is an unknown rotation in $\mathbb{R}^3$, $\mathcal{F}$ is the Fourier transform, and $\mathcal{P}$ is the restriction to the plane $\hat{z}=0$.\footnote{This is disregarding the curvature of the Ewald sphere in XFEL.} 
The image records photon counts at each pixel, such that the counts are often assumed to be independent draws from a Poisson distribution having rate $I_j(\hat{x}, \hat{y})$.
In class averaging, we assume there is a limited set of ``significant views" of the molecule,  $\{\tilde{R}_j\}_{j=1}^r$.
The data consists of repeated noisy measurements of $I_j$ produced by these $\tilde{R}_j$. 
The goal is to recover $I_j$ from these measurements.\footnote{  
In this illustration, we neglect issues of matching images via in-plane rotations.
As ideally stated, the task is  mean estimation for a Poisson mixture. }

We simulate class averaging using the human ribosome molecule, which is 5LKS in the Protein Data Bank  \cite{myasnikov2016structure}.  
The scalar field $\phi$ is computed by the molmap function in Chimera \cite{chimera}, see \cref{fig:5lks} for a visualization.
Thirty important rotations $\tilde{R}_j$ are randomly chosen $(r=30)$.  
We compute $32 \times 32$ representations of the corresponding functions $I_j$ \eqref{eq:XFEL-Ij} ($n=1024$).
(Only the DFT coefficients of the nonnegative frequencies are kept, because of conjugate symmetry in \eqref{eq:XFEL-Ij}.)
We simulate a mixture of Poissons \eqref{eq:poisson} using these $32 \times 32$ coefficients as means vectors.
We pick the mixing weights by sampling a vector uniformly in $[1, 5]^n$ and normalizing. Finally, $p=20000$ XFEL images are simulated as samples from the Poisson mixture.

\begin{figure}[htbp]
    \centering
    \begin{minipage}{0.5\linewidth}
        \includegraphics[width = \linewidth]{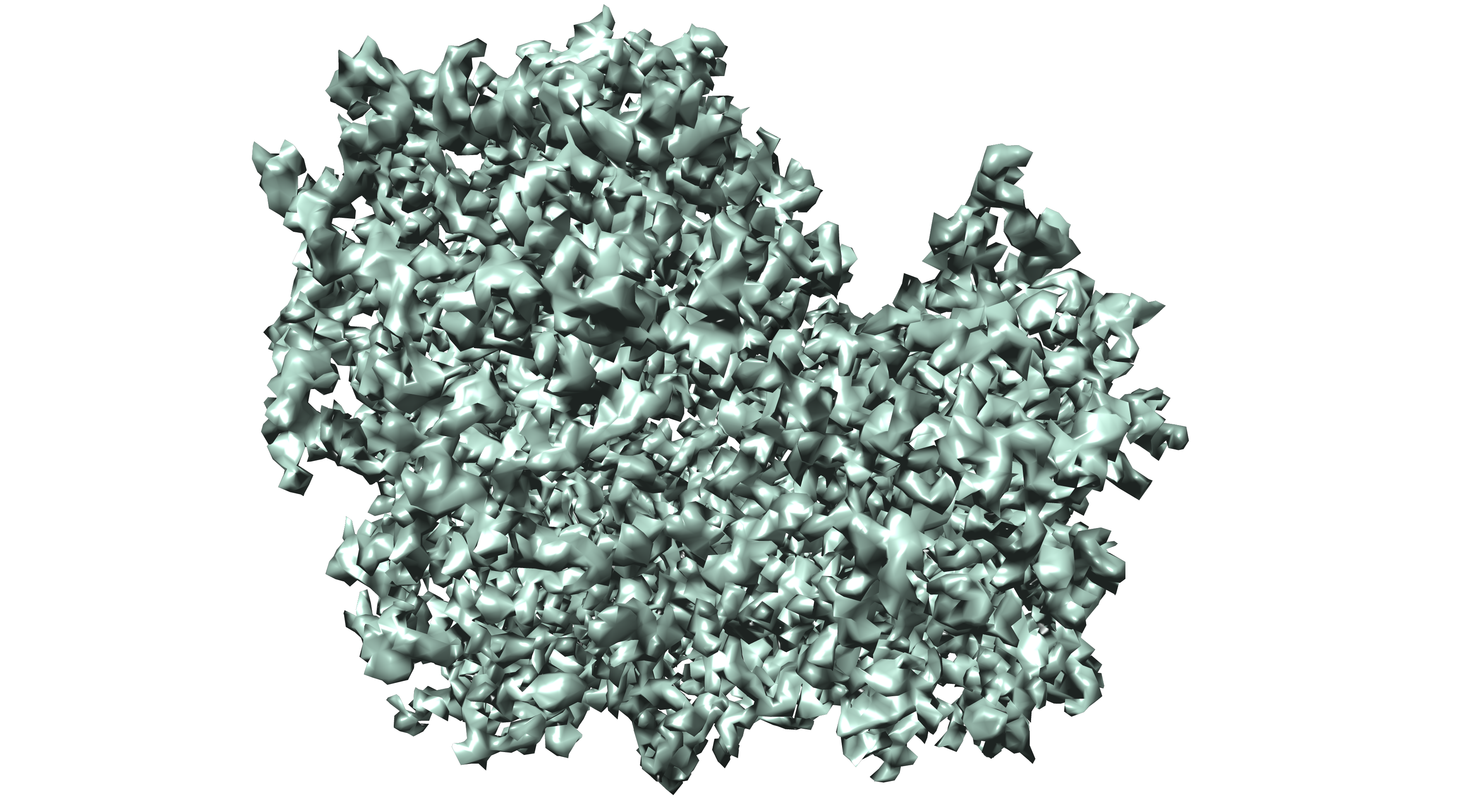}  
    \end{minipage}
    \quad
    \begin{minipage}{0.3\linewidth}
        \caption{The human ribosome: molecule 5LKS in the Protein Data Bank.}  
    \end{minipage}
    \label{fig:5lks}
\end{figure}

The \ALS algorithm takes $\sim 40$ minutes to converge.
It achieves an error of $0.9\%$ in weights and $0.5\%$ in means (as defined in \eqref{eq:error-metric}).
We benchmark it against \pom EM. 
Thirty $\textup{k-means}^{++}$ runs with a maximum of 10 iterations are carried out, with the best run used to initialize EM.
A single EM descent takes  $50 - 70$ minutes to converge. 
We repeat the EM procedure three times with different seeds. The best EM result still has an error of $> 13\%$ in the means.

The solutions are shown in \cref{fig:xfel}.
We show the ground-truth pictures, images computed by \ALS, and the best EM result.\footnote{ 
  For visualization purposes, the data is $\log$-transformed and we lower-bound the pixel values by $10^{-4}$.
  Respectively 1, 4, and 0 pixel values are changed for the ground-truth, \ALS and EM.}   
We circled the most obvious dissimilarities in the computed means and weights in red.
We remark that on real XFEL data, \ALS may enjoy extra advantages over EM --  
since \ALS is nonparametric, it should be more robust to violations of the Poisson noise model.

\begin{figure}[!htbp]
    \centering
    \subfloat{\includegraphics[width=0.8\linewidth]{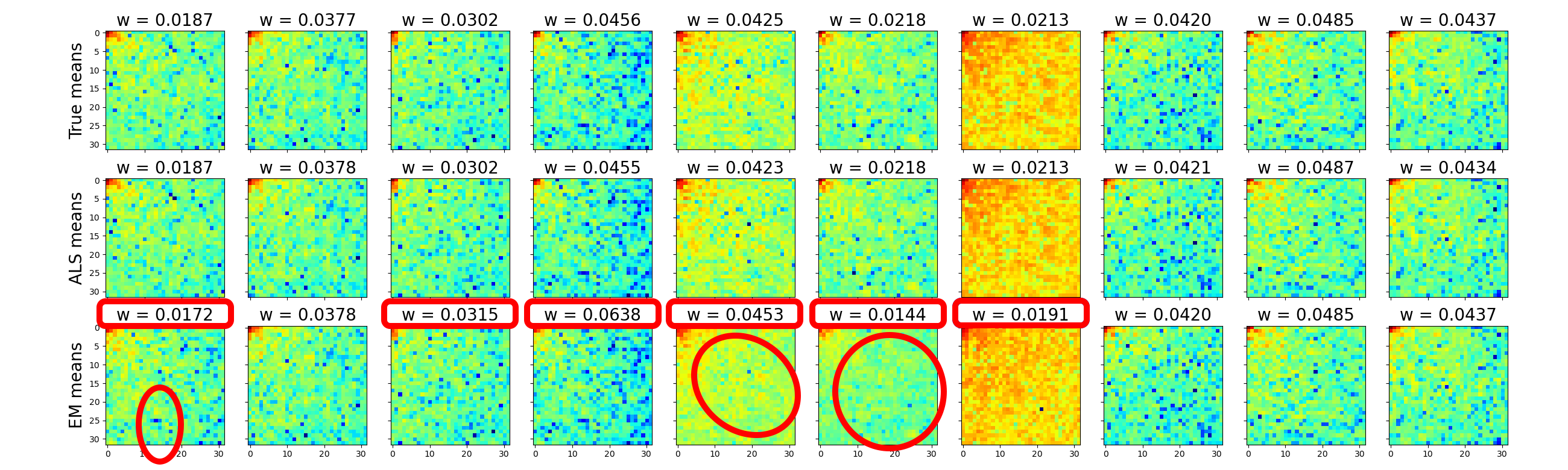}}\\
    \subfloat{\includegraphics[width=0.8\linewidth]{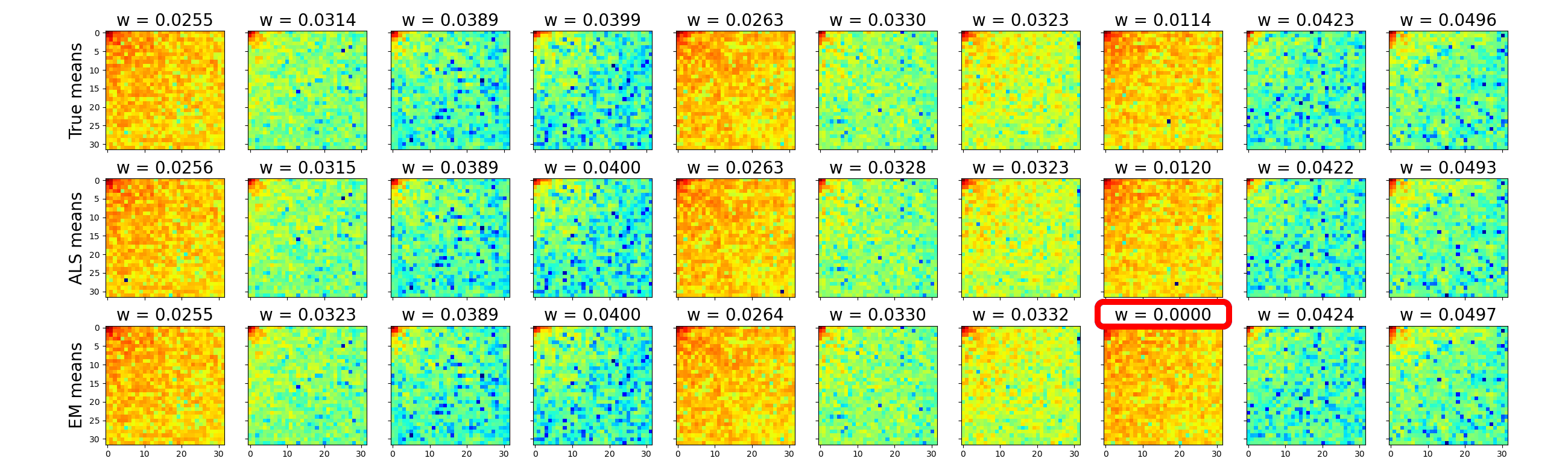}}\\
    \subfloat{\includegraphics[width=0.8\linewidth]{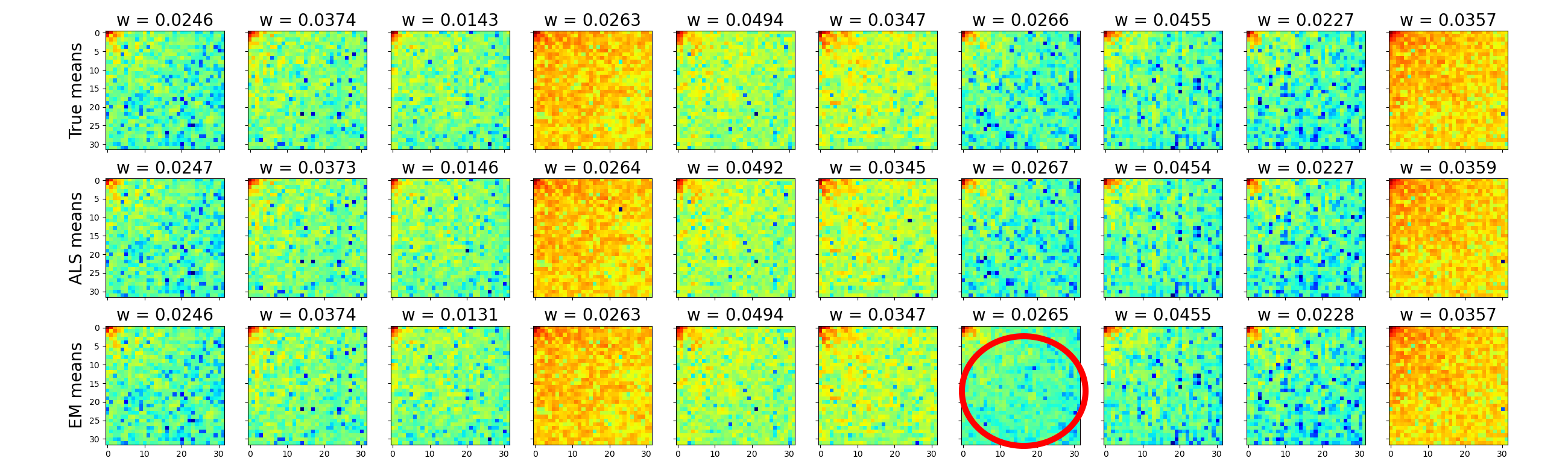}}  
    \caption{Results for the XFEL class averaging simulation, comparing \cref{alg:finalALS}  to \pom EM.  It is simulated as a Poisson mixture ($ p=20000, n=1024, r=30, d=4$) using images \eqref{eq:XFEL-Ij} of \cref{fig:5lks} as ground-truth means. 
 Top: ground-truth means; middle: computed means by \ALS\!\!; bottom: computed means by EM.}  
    \label{fig:xfel}
\end{figure}

\subsection{Application: Digit Recognition} 
\label{sec:usps}
We test the \ALS algorithm on a real dataset to show its performance when the conditional independence assumption does not hold.
We use the USPS dataset \cite{usps}, which is similar to the MNIST dataset \cite{deng2012mnist}, consisting of pictures of handwritten numbers and letters, though we only consider the digits.
Prior studies have fitted parametrized mixture distributions to the USPS dataset \cite{keysers2002combination,ma2009beta}.

Note that adjacent pixels are highly correlated.
We increase the number of clusters from $10$ to $26$, as each digit is written in different ways, and a larger number of product distributions may fit a mixture with correlated distributions better.
Each gray-scale image is normalized to have norm 1 and we attach to it its discrete cosine transform.
Even though attaching the DCT increases dependency between the coordinates, this produced the best visual results among different preprocessings.
We compare to the $\textup{k-means}^{++}$ solution from Python package \textit{scikit-learn}\footnote{\url{https://scikit-learn.org/stable/index.html}}, where 30 initializations are used and the best result is kept.

\begin{figure}[htbp]
    \centering
    \includegraphics[width = 0.8\linewidth]{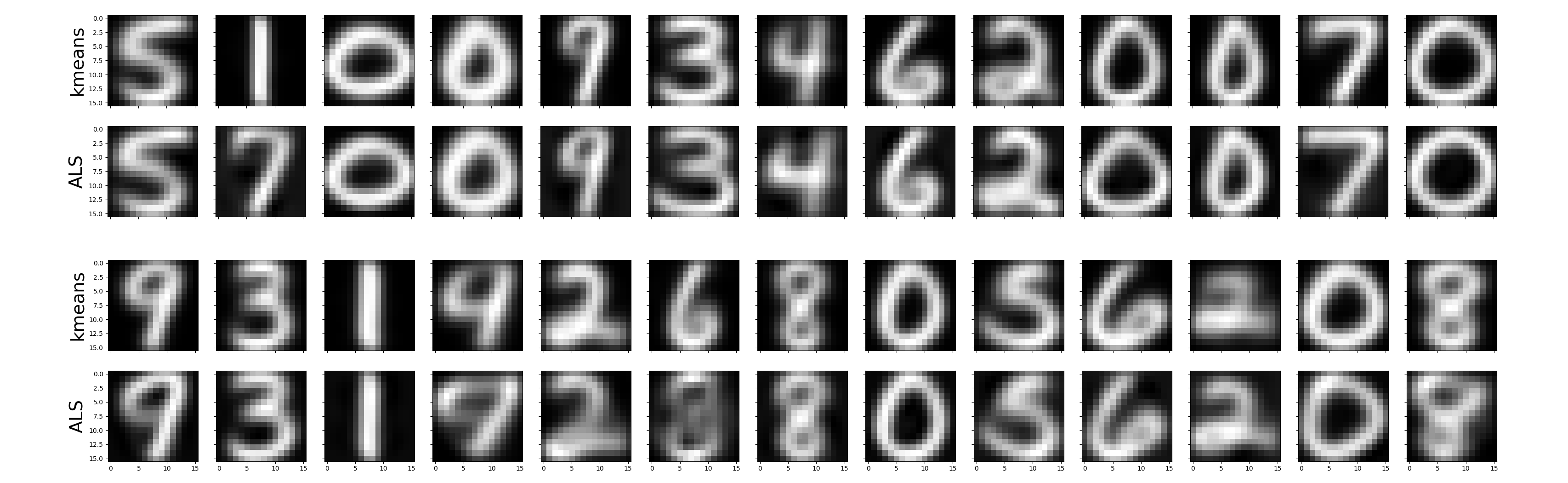}
    \caption{Results for the USPS dataset, comparing \cref{alg:finalALS} ($p=9296, n=512, r = 26, d=4$) to \textit{scikit-learn} $\textup{k-means}^{++}$\!.   
    Top: $\textup{k-means}^{++}$ solution; bottom: \ALS solution.}
    \label{fig:usps}
\end{figure}

The results are in \cref{fig:usps}. 
Images from \ALS and $\textup{k-means}^{++}$ are matched so that the Euclidean distance between them is minimized.
\ALS captures a different font of `7' in the second picture rather than duplicating another `1' like $\textup{k-means}^{++}$, but it errs on the first `8' in the second row.
Overall, the ALS framework does a reasonable job on this dataset even though entries are conditionally-dependent.

    \section{Discussion}
    This paper addressed the nonparametric estimation of conditionally independent mixture models \eqref{eq:cond-inp-mix}. 
    A two-step approach was proposed.
    First, we learned the mixing weights and component means from the off-diagonal parts of joint moment tensors.
    An efficient tensor-free ALS based algorithm was developed, building on kernel methods and recent works in implicit tensor decomposition. 
    Second, we used the learned weights and means to evaluate the general means functional $g\mapsto \E_{X\sim \cD_j}[g(X)]$ through a linear solve, again evading the formation of moment tensors.
    This procedure recovers the component distributions and higher moments.
    
    Numerical examples showed that the ALS framework has broad applicability.  It also exhibits potential advantages over popular alternative procedures, in terms of accuracy, reliability, and timing.
    On the theoretical side,
    we derived a system of incomplete tensor decompositions, relating the first few joint moments to mixing weights and component moments.
    Identifiability from these joint moments of the mixing weights and first few component moments was established under $r = \cO(n^{\floor{d/2}})$.
    Once the weights and means are known, the general means are shown to be identifiable from $\E[g(X) \otimes X^{\otimes {d-1}}]$ provided $r \leq \binom{n-1}{d-1}$.
    A local convergence analysis of the ALS algorithm was also conducted.  

    For the practicing data scientist, what is the significance? 
    First, we provided new tools for many mixture models which are well-suited to high dimensions.
    They are very
    competitive compared to popular packages, and use a completely different methodology.
    Second, we introduced a flexible nonparametric framework. 
    This is advantageous in real applications, especially when a good parameterization is not known a priori.

    We list possible future directions. 
    The MoM framework can be naturally applied to data streaming applications.
    The mean and weight estimation only depend on the moments of the data, so online compression tools can be applied to the moments.
    It should also be possible to generalize our methods to other dependency structures, e.g. conditionally sparsely-dependent mixtures.  
    Another direction is to conduct an application study.  The bio-imaging application in \cref{sec:xfel} is a prime target.
    
    \printbibliography
    
    \newpage
    \appendix

In these supplementary materials, \cref{apx:pf_mom} and \cref{apx:pf_conv} give the proofs to the theorems in the main text, respectively in \cref{sec:mom} and \cref{sec:alsconv}.
Then in \cref{apx:practice} we include full details of \ALS, briefly discussed in \cref{sec:improve}.
Derivative evaluation formulas are also given.
Finally in \cref{apx:hyper}, we list the hyperparameters used in the experiments in \cref{sec:exp}.

\section{Proofs for Identifiability}\label{apx:pf_mom}

\subsection{Proof of \cref{thm:low_rank_unique}} \label{apx:pf_low_rank_unique}

The main step is to uniquely fill in the diagonal entries of $\t T = \m P \big{(} \sum\nolimits_{j=1}^r \v a_j^{\otimes d} \big{)}$.
Then uniqueness of symmetric CP decompositions \cite{chiantini2017generic} implies the theorem.
Let $T_0$ be the reduced matrix flattening of $\t T$ (see \cref{lem:full_rk_flat_sym}).
As sketched in the main text, we locate $(r+1) \times (r+1)$ submatrices $\m M$ in $\m T_0$ in which only one entry $x$ is missing. 
\cref{lem:full_rk_flat_sym} allows us to compute the rank for submatrices of $\t T_0$. We give its proof now.

\begin{proof}(\textit{of \cref{lem:full_rk_flat_sym}})
Consider an $m \times m$ submatrix $\m M$ of $\m T_0$. 
Let its row and column labels be given by the words $\sR = \{\v i_1, \ldots, \v i_m\} \subset [n]^s$ and  $\sC = \{\v j_1, \ldots, \v j_m\} \subset [n]^t$ respectively.
Define masks $P_{\sR} : \mathbb{R}^{n^s} \rightarrow \mathbb{R}^m$ by $P_{\sR}(\t S) = (\t S_{\v i_1}, \ldots, \t S_{\v i_m})^{\top}$ and 
$ {P}_{\sC} : \mathbb{R}^{n^t} \rightarrow \mathbb{R}^m$ 
by ${P}_{\sC}(\t S) = (\t S_{\v j_1}, \ldots, \t S_{\v j_m})^{\top}$. 
We factor $\m M$ as follows:
\begin{equation}\label{eq:M-factor}
\m M \, = \, {\begin{pmatrix} P_{\sR}(\v a_1^{\odot s}) & \ldots & P_{\sR}(\v a_r^{\odot s}) \end{pmatrix}} {\begin{pmatrix} 
P_{\sC}(\v a_1^{\odot t}) & \ldots & P_{\sC}(\v a_1^{\odot t})
\end{pmatrix}^{\!\top}}
\end{equation}
where the factors are of size $m \times r$ and $r \times m$ respectively and $\odot$ denotes Khatri-Rao products.
In particular  $\operatorname{rank}(\m M) \leq r$, and without loss of generality we assume $m \leq r$. 
The Cauchy-Binet formula applied to \eqref{eq:M-factor} yields:
\begin{equation} \label{eq:cauchy-binet}
\det(\m M) = \!
 \sum_{1 \leq k_1 < \ldots < k_m \leq r} \!\!\!\!\! \det \! \begin{pmatrix} P_{\sR}(\v a_{k_1}^{\odot s}) & \cdots & P_{\sR}(\v a_{k_m}^{\odot s})\end{pmatrix} \det \!\begin{pmatrix} P_{\sC}(\v a_{k_1}^{\odot t}) & \cdots & P_{\sC}(\v a_{k_m}^{\odot t})\end{pmatrix}.
\end{equation}
We regard the entries of $\m A$ as independent variables.  
Each summand in \eqref{eq:cauchy-binet} is  a multihomogeneous polynomial, of degree $s+t$  in each of the columns $\v a_{k_1}, \ldots, \v a_{k_m}$.
Furthermore, each is a nonzero polynomial because
\begin{equation*}
\det \! \begin{pmatrix} P_{\sR}(\v a_{k_1}^{\odot s}) & \cdots & P_{\sR}(\v a_{k_m}^{\odot s})\end{pmatrix}  = \sum_{\pi \in S_{m}} \v a_{k_{\pi(1)}}^{\v i_1} \! \ldots \v a_{k_{\pi(m)}}^{\v i_m}
\end{equation*}
consists of distinct monomials (as the $\v i$'s are distinct), and likewise for the determinant involving the $P_{\sC}$'s in \eqref{eq:cauchy-binet}, where we denoted $\v a^{\v i} = a_1^{i_1} \ldots a_{n}^{i_n}$.
Therefore $\det(\m M)$ is a nonzero polynomial.  So for generic values of $\v A$ it gives a nonzero number, and $\operatorname{rank}(\m M) = m$.    The proof of \cref{lem:full_rk_flat_sym} is complete.  
\end{proof}

By \cref{lem:full_rk_flat_sym}, $\m M \in \R^{(r+1)\times (r+1)}$ is rank $r$ and the minor of the only missing entry $x$ in $\m M$ is nonzero, so we can uniquely recover it, as stated below.  

\begin{corollary}\label{lem:fillmat}
   In the setting of \cref{lem:full_rk_flat_sym}, assume $r < \binom{n+s-1}{s}$ and that $\m A$ is Zariski-generic.  Let $\m M$ be an $(r + 1) \times (r + 1)$ submatrix of $\m T_0$. 
   Then any entry $m_{ij}$ of $\m M$ is uniquely determined given  all other entries in $\m M$.
\end{corollary}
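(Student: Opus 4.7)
My plan is to convert the corollary into a statement about a single linear equation whose coefficient is nonzero, and then invoke \cref{lem:full_rk_flat_sym} to guarantee that the coefficient does not vanish for Zariski-generic $\m A$. The starting observation is that $\m T_0$ is the matrix flattening of a symmetric tensor $\t T$ of CP-rank at most $r$, so $\operatorname{rank}(\m T_0) \leq r$ and hence every $(r+1) \times (r+1)$ submatrix, including $\m M$, is singular. This yields the polynomial constraint $\det(\m M) = 0$ relating the entries of $\m M$.

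The next step is to observe that $\det(\m M)$ depends linearly on any individual entry $m_{ij}$. Cofactor expansion along the $i$th row (or $j$th column) gives
\begin{equation*}
\det(\m M) \, = \, m_{ij}\, C_{ij} \, + \, R_{ij},
\end{equation*}
where $C_{ij} = (-1)^{i+j}\det(\m M^{ij})$ is the $(i,j)$-cofactor, $\m M^{ij}$ is the $r \times r$ matrix obtained from $\m M$ by deleting row $i$ and column $j$, and $R_{ij}$ is a polynomial in the remaining entries of $\m M$. Thus whenever $C_{ij} \neq 0$ we may solve $m_{ij} = -R_{ij}/C_{ij}$, which determines $m_{ij}$ uniquely from the other entries.

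It remains to argue that $C_{ij} \neq 0$ for Zariski-generic $\m A$. The matrix $\m M^{ij}$ is an $r \times r$ submatrix of $\m T_0$, and the hypothesis $r < \binom{n+s-1}{s}$ ensures such a submatrix is a genuine $r \times r$ block and that $r \leq \binom{n+s-1}{s}$, so \cref{lem:full_rk_flat_sym} applies with $m = r$ and gives $\operatorname{rank}(\m M^{ij}) = r$, i.e.\ $\det(\m M^{ij}) \neq 0$. Combined with $\det(\m M) = 0$, this completes the argument.

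The only real subtlety, and hence the ``main obstacle,'' is a bookkeeping one: \cref{lem:full_rk_flat_sym} is stated for an abstract $m \times m$ submatrix of $\m T_0$, and one must check that the $r \times r$ minor $\m M^{ij}$ lying inside a prescribed $(r+1) \times (r+1)$ submatrix $\m M$ still satisfies the genericity hypothesis. This is immediate because the Zariski-genericity of $\m A$ is assumed globally (not relative to a choice of submatrix), and the nonvanishing locus of the product of the relevant minor determinants is still a Zariski-open dense subset of the parameter space $\mathbb{R}^{n \times r}$; so a single generic choice of $\m A$ makes the argument work uniformly over all $(r+1) \times (r+1)$ submatrices $\m M$ and all indices $(i,j)$.
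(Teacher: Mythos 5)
Your proposal is correct and follows essentially the same route as the paper: both use the singularity of the $(r+1)\times(r+1)$ submatrix together with \cref{lem:full_rk_flat_sym} applied to the $r\times r$ minor obtained by deleting row $i$ and column $j$, then solve the resulting linear equation for $m_{ij}$ via Laplace/cofactor expansion. The closing remark about a single Zariski-open condition working uniformly over all submatrices is a correct (and implicit in the paper) piece of bookkeeping.
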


\begin{proof}
By \cref{lem:full_rk_flat_sym}, 
 both $\m M$ and $\m M'$ have rank $r$, where $\m M' \in \mathbb{R}^{r \times r}$ is obtained by dropping row $i$ and column $j$ from $\m M$.  
So $\det(\m M) = 0$ and $\det(\m M') \neq 0$. 
However by Laplace expansion, $\det(\m M) = (-1)^{i+j} \det(\m M') m_{ij} + b$, where $b$ only depends on entries of $\m M$ besides $m_{ij}$.  The result follows.
\end{proof}

Finally, we show the details of filling the diagonal entries in $\t T$ and prove \cref{thm:low_rank_unique}.

\begin{proof} (\textit{of \cref{thm:low_rank_unique}})
    It suffices to show  the diagonal entries in $\t T$ are uniquely determined by  $ r \leq \binom{\lfloor (n-1)/2 \rfloor}{\lfloor d/2 \rfloor}$ and knowledge of $\m P \t T$.
    For then by \cite[Thm.~1.1]{chiantini2017generic}, the CP decomposition of $\t T$ is unique implying $\m A = \m B \m \Pi$, except for the seven weakly defective cases in \cite{chiantini2017generic}.
    But one can check that under our rank bound, all weakly defective cases with $d < n$ are avoided.

    For the diagonal completion we start by flattening $\t T$ to its reduced matrix flattening $\m T_0$ where $s = \lfloor d/2 \rfloor$ and $t = \lceil d/2 \rceil$ (see \cref{lem:full_rk_flat_sym}). 
    Under our rank bound,
    \begin{equation*}
        \rank(\m T_0) = \min \Big{\{}r, \binom{n + \floor{d/2} - 1}{\floor{d/2}}\Big{\}} = r.
    \end{equation*}
    For an index $(\v i, \v j) = (i_1\ldots i_{\floor{d/2}},~j_1\ldots j_{\ceil{d/2}})$ and  letter $b\in [n]$, define its $\v i$ and $\v j$ \defil{multiplicity} to respectively be the number of instances of $b$ in  $\v i$ and $\v j$.
    Define its $(\v i, \v j)$ multiplicity to be the sum of its $\v i$ and $\v j$ multiplicity.
    By \defil{degree} of $\v i$, $\v j$, and $(\v i, \v j)$, we shall respectively mean the maximal multiplicity of any letter in $\v i$, $\v j$, and $(\v i, \v j)$.

    We follow the completion strategy indicated by the path \eqref{eq:fillpath}, by first  solving for the diagonal entries with $\v i$ and $\v j$ having degree 1  by an  induction on the number $k$ of common letters  between $\v i$ and $\v j$.
    In the base case $k = 1$, 
    without loss of generality, we may take $i_1 = j_1 = 1$ (it is not hard to repeat the following argument for any $i_a = j_b = c$).
    Now we split the set $[n]\setminus\set{1}$ into two parts $R$ and $C$ evenly (i.e. $|R| \leq |C| \leq |R| + 1$), such that $\set{i_2, \ldots, i_{\floor{d/2}}} \cap C = \emptyset$, and $\set{j_2, \ldots, j_{\ceil{d/2}}}\cap R = \emptyset$. This is possible whenever $d < n$.
    Let the collection of all non-decreasing words of $C$ of size $\ceil{d/2}$ be $C_s$, and
    the collection of all strictly increasing words of $R$ of size $\floor{d/2}$ be $R_s$.
    By the low rank assumption, $|R_s| = \binom{\floor{(n-1)/2}}{\floor{d/2}} \geq r$, $|C_s| = \binom{\ceil{(n-1)/2}}{\ceil{d/2}} \geq r$.
    Consider the submatrix of $\m T$ with rows indexed by $\set{\v i} \cup R_s$ and columns indexed by $\set{\v j} \cup C_s$.
    $(\v i, \v j)$ is the only missing entry in this submatrix of size $(1 + |R_s|)\times(1 + |C_s|) \geq (1 + r) \times (1 + r)$. 
    Therefore by \cref{lem:fillmat}, this entry is uniquely determined.

    Next, with the base case proved, suppose we have recovered all such entries with at most $k-1$ common letters.
    Without loss of generality, let us solve for entry $\v i = 12...ki_{k+1}\ldots i_{\floor{d/2}}$ and $\v j = 12...kj_{k+1}\ldots j_{\ceil{d/2}}$ with the $i$'s and $j$'s all different and greater than $k$.
    Again, let $\hat{C}$ and $\hat{R}$ be an even split of the set $[n]\setminus[k]$ (i.e. $|\hat{R}| \leq |\hat{C}| \leq |\hat{R}| + 1$), such that $\set{i_{k+1}, \ldots, i_{\floor{d/2}}} \cap C = \emptyset$, and $\set{j_{k+1}, \ldots, j_{\ceil{d/2}}}\cap R = \emptyset$.
    Then define $C = \set{2,\ldots,k}\cup \hat{C}$, $R = \set{2, \ldots, k}\cup \hat{R}$. Then $|C| = \ceil{(n-1)/2}$ and $|R| = \floor{(n-1)/2}$.
    Define collections $C_s$ and $R_s$ as before, and consider the submatrix of $\m T$ with rows indexed by $\set{\v i} \cup R_s$ and columns indexed by $\set{\v j} \cup C_s$. 
    This is again a matrix of shape $(1 + |R_s|)\times(1 + |C_s|) \geq (1 + r) \times (1 + r)$, and the only missing entry is $(\v i, \v j)$ since all other entries have  $\leq k-1$ common letters between $\v i$ and $\v j$.

    Finally, we do another induction step on the degree. 
    Suppose the entries with $(\v i, \v j)$ degree at most $2^{k}$ are solved, we will recover those with degree at most $2^{k+1}$. 
    The base case of degree 2 has been proved, since by symmetry of the tensor, for any index $(\v i, \v j)$ with degree 2, we can rearrange the indices so that the $\v i$ degree and $\v j$ degree are both 1, and these entries have been recovered in the last step.

    Now suppose we are done with $(\v i, \v j)$ whose degree is at most $2^{k}$ ($k \geq 1$). 
    For index $(\v i, \v j)$ with degree at most $2^{k+1}$, by symmetry of the tensor, we can rearrange the indices such that for each letter,
    its $\v i$ and $\v j$ multiplicity differ by at most 1, and thus are both at most $2^{k}$. 
    Without loss of generality, let $1, \ldots, m$ be those  letters with $(\v i, \v j)$ multiplicity at least $2^{k} + 1$. 
    For all other numbers in the index, their $\v i$ and $\v j$ multiplicity are both at most $2^{k-1}$.
    Now let $C = R = [n]\setminus[m]$, then since $3 m \leq (2^k + 1)m \leq d$, we have $|C| = |R| \geq n - \floor{d/3} \geq \ceil{(n-1)/2}$. 
    Thus, with $R_s$ and $C_s$ defined as above, $|R_s| = \binom{|R|}{\floor{d/2}} \geq r$, and $|C_s| = \binom{|C|}{\ceil{d/2}} \geq r$.
    Again, consider the submatrix of $\m T$ with rows indexed by $\set{\v i} \cup R_s$ and columns indexed by $\set{\v j} \cup C_s$. 
    This matrix of shape $(1 + |R_s|)\times(1 + |C_s|) \geq (1 + r) \times (1 + r)$ has the only missing entry $(\v i, \v j)$, since all other entries have an $(\v i, \v j)$ degree of at most $1 + 2^{k-1} \leq 2^k$.

    With this established, we finish the completion procedure.  
    As explained in the first paragraph, \cref{thm:low_rank_unique}  follows.
\end{proof}

\subsection{Proof of \cref{thm:gmeans_id}} \label{apx:pf_gmeansid}

\begin{proof}(\textit{of \cref{thm:gmeans_id}})
    We solve $\m Y = (\v y_1|\ldots|\v y_r)$ row-by-row where $\v y_j = \E_{X\sim\cD_j}[g(X)]$.
    By \cref{prop:general-equations},
    \begin{equation*}
        \m P\Big{(} \sum\nolimits_{j=1}^r w_j \v y_j \otimes (\v m_j^1)^{\otimes d-1}\Big{)}
        =
        \m P \Big{(} \mathbb{E}_{X \sim \cD} [g(X) \otimes X^{\otimes d-1}]\Big{)}.
    \end{equation*}
    We solve the $k$th row $\v y^k$ by setting the $\ell^2$ distance between the two sides to $0$.
    Applying the projection $\sym()$ on both sides and following the same variable separation approach in \eqref{eq:separation}, we get
    \begin{equation*}
        d\gt_d \, \Big{\|}\P \Big(
            \sum\nolimits_{j = 1}^r (\v \gb^k)_j (\v a_j^{(k)})^{\otimes i-1} 
            - 
            \sym \E_{X\sim \cD}\big[g_k(X_k) \cdot (X^{(k)})^{\otimes d-1}\big] 
        \Big{)}\Big{\|}^2,
    \end{equation*}
    where $\v \gb^k = \v w * \v y^k$.
    This is exactly the least squares problem solved by \cref{alg:gmom}.
    Since $r \leq \binom{n-1}{d-1}$ and $\gt_d > 0$, according to \cref{prop:full-rank-general}, this linear least squares problem has a unique minimizer.
\end{proof}

\section{Proofs for ALS} \label{apx:pf_als}

\subsection{Proofs for Invertibility of $\m L$}\label{apx:pf_invL}

We start with a lemma that is helpful for proving the invertibility of $\m L$.

\begin{lemma} \label{lem:offdiag_indep}
    Let $\set{\v u_j}_{j = 1}^r \subset \R^n$ be Zariski-generic.
    Then $\big{\{}\P(\v u_j^{\otimes d})\big{\}}_{j = 1}^r$ is linearly independent if and only if
    $r \leq \binom{n}{d}$.
\end{lemma}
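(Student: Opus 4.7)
The statement is an \emph{if and only if}, and both directions reduce to a dimension count plus one genericity argument.

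For the necessary direction, observe that each $\P(\v u_j^{\otimes d})$ is a symmetric tensor whose support is confined to index tuples with pairwise-distinct entries. By symmetry, such a tensor is determined by its values on unordered $d$-subsets $S \subset [n]$, with the entry at any ordering of $S$ equal to $\prod_{i \in S} u_i$. Hence the $\P(\v u_j^{\otimes d})$ all sit inside a subspace of $S^d(\R^n)$ of dimension exactly $\binom{n}{d}$, and the collection $\{\P(\v u_j^{\otimes d})\}_{j=1}^r$ is automatically dependent whenever $r > \binom{n}{d}$.

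For the sufficient direction, I would invoke the standard principle: if a constructible set spans an ambient vector space $\R^N$, then its generic $r$-tuples of points are in linear general position for all $r \leq N$. To apply this here, consider the map $\phi \colon \R^n \to \R^{\binom{n}{d}}$, $\v u \mapsto \big(\prod_{i \in S} u_i\big)_{S \in \binom{[n]}{d}}$, under the identification of the off-diagonal subspace with $\R^{\binom{n}{d}}$. The coordinate functions of $\phi$ are the $\binom{n}{d}$ distinct squarefree monomials of degree $d$ in $u_1, \ldots, u_n$, which are linearly independent polynomials on $\R^n$; consequently $\operatorname{span}(\phi(\R^n)) = \R^{\binom{n}{d}}$.

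The genericity argument then proceeds by induction on $r$. For $r=1$, we need only $\phi(\v u_1) \neq 0$. Assuming the claim for $r-1 \leq \binom{n}{d}-1$, for $(\v u_1, \ldots, \v u_{r-1})$ in some nonempty Zariski-open set the span $W$ of $\phi(\v u_1), \ldots, \phi(\v u_{r-1})$ has dimension $r-1 < \binom{n}{d}$, hence is proper. Since $\phi(\R^n)$ spans the ambient space, $\phi(\R^n) \not\subset W$, and $\{\v u_r : \phi(\v u_r) \in W\}$ is a proper Zariski-closed subset of $\R^n$. In the complement, $\{\phi(\v u_j)\}_{j=1}^r$ is linearly independent, and the nonvanishing of some $r \times r$ minor of the matrix $(\phi(\v u_j))_{j=1}^r$ provides the required polynomial witness on $\R^{nr}$.

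The hard part, if any, is just the bookkeeping of the identification $W_{\mathrm{off}} \cong \R^{\binom{n}{d}}$; beyond that, the proof is a short application of two elementary facts (off-diagonal symmetric tensors have $\binom{n}{d}$ essential entries, and distinct squarefree monomials are linearly independent) combined with the standard inductive genericity argument.
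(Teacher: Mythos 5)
Your proof is correct and takes essentially the same route as the paper's: both directions reduce to the $\binom{n}{d}\times r$ matrix whose entries are the squarefree degree-$d$ monomials $\prod_{i\in S}(\v u_j)_i$, with the ``only if'' direction being the same dimension count on the off-diagonal subspace. The only difference is how the generic nonvanishing of an $r\times r$ minor is certified: the paper expands the determinant by the Leibniz formula and observes that its monomials are distinct (as in the proof of \cref{lem:full_rk_flat_sym}), whereas you use a softer spanning-plus-induction general-position argument; both are valid.
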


\begin{proof}
    We form an $\binom{n}{d} \times r$ matrix $\m M$,
    with rows indexed by the size $d$ subsets of $[n]$, such that $\operatorname{rank}(\m M) = \operatorname{dim} \operatorname{span} \big{(}\P(\v u_j^{\otimes d}) : j =1, \ldots, r \big{)} $ as follows:
    \begin{equation*}
        \m M_{\{i_1,\ldots,i_{d}\}, j} = (\v u_j)_{i_1} (\v u_j)_{i_2}\cdots (\v u_j)_{i_{d}}.
    \end{equation*}
    If $r \leq \binom{n}{d}$, $\m M$ is tall and thin, and it is straightforward to verify that the determinantal polynomial for any $r \times r$ submatrix is nonzero as in the proof of \cref{lem:full_rk_flat_sym}.
    If $r > \binom{n}{d}$, then the columns cannot be all linearly independent.  
\end{proof}

The proofs for invertibility of $\m L$ for the weight and mean updates are similar.
We prove then respectively below.

\begin{proof}(\textit{of \cref{prop:invertL_w}})
    The cost function at order $d$ is given by
    \begin{equation*}
        f^{(d)} = \gt_d \Big{\|}\P \big{(}\sum_{j = 1}^r w_j {\v a_j}^{\otimes d} - \t T\big{)}\Big{\|}^2.
    \end{equation*}
    Here $\t T$ is a constant tensor independent of $\v w$.
    By \cref{lem:offdiag_indep}, the columns $\P(\tprod{\v a_j}{d})$ of the coefficient matrix of the least squares problem are generically linearly independent given $r \leq \binom{n}{d}$. Since $\gt_d > 0$, there is a unique minimizer $\v w$ of $f^{(d)}$.
    Adding costs $f^{(i)}$ of lower orders is appending more rows to the coefficient matrix of the least squares problem, and thus it can only increase the (column) rank of the coefficient matrix.
    Therefore, we conclude that the matrix $\m L$ in the normal equation is invertible.
\end{proof}

\begin{proof}(\textit{of \cref{prop:invertL_a}})
    Assume row $k$ is being updated.
    Consider the cost at order $d$.
    We solve $\v \gb^k$ that minimizes (see \eqref{eq:alsd-1})
    \begin{equation*}
        d\gt_d \Big{\|}\P \big{(}\sum_{j = 1}^r (\v \gb^k)_j {\v a_j^{(k)}}{^{\otimes d-1}} - \t T\big{)}\Big{\|}^2,
    \end{equation*}
    where $\v a_j^{(k)}$ removes its $k$th entry, and $\t T$ is a constant tensor independent of $\v \gb^k$.
    Similar to the proof of \cref{prop:invertL_w}, the coefficient matrix for this least squares problem is generically linearly independent given $r \leq \binom{n-1}{d-1}$. Since $\gt_d > 0$, there is a unique minimizer $\v \gb^k$ of $f^{(d)}$.
    Again adding lower order costs will only increase the column rank.
    Therefore, the matrix $\m L$ in the normal equation is invertible.
\end{proof}

\subsection{Proofs for Convergence}\label{apx:pf_conv}
We start by stating the main tool 
for local linear convergence coming from the alternating optimization literature.

\begin{theorem} (See \cite[Thm.~2, Lem.~1-2]{bezdek2003convergence}) \label{lem:alsconv}
    Consider a minimization problem 
    \begin{equation}
        \min_{\v x_1,\ldots \v x_n} f(\v x_1,  \ldots,\v x_n),
    \end{equation}
    and the alternating optimization (AO) scheme:
    \begin{equation*}
        \begin{cases}
            \v x_1^{(t+1)} & = \argmin_{\v x_1} f(\v x_1, \v x_2^{(t)}, \ldots, \v x_n^{(t)}) \\[0.1em]
            \v x_2^{(t+1)} & = \argmin_{\v x_2} f(\v x_1^{(t+1)}, \v x_2, \ldots, \v x_n^{(t)}) \\[0.1em]
            & \,\, \vdots \\[0.1em]

            \v x_n^{(t+1)} & = \argmin_{\v x_n} f(\v x_1^{(t+1)}, \ldots, \v x_{n-1}^{(t+1)}, \v x_n)
        \end{cases}
    \end{equation*}
    Let $\v x^*$ be a local minimum.
    If $f$ is $C^2$ in a neighborhood around $\v x^*$ and the Hessian $\m H$ of $f$ at $\v x^*$ is positive-definite,
    then the spectral radius satisfies 
    \begin{equation*}
    \gr\left(\m I - \m M^{-1} \m H(\v x^*)\right) < 1,
    \end{equation*}
    where $\m M$ is the  lower block-diagonal part of $\m H$ with respect to the ordering $\v x = (\v x_1,\ldots,\v x_n)$.
    Define the errors $\v e^{(t)} = \v x^{(t)} - \v x^*$. If $f$ is $C^3$, then 
    \begin{equation} \label{eq:errit}
        \v e^{(t+1)} = \left(\m I - \m M^{-1} \m H(\v x^*)\right)\v e^{(t)} + \cO(\|\v e^{(t)}\|^2),
    \end{equation}
    and so AO locally converges linearly to $\v x^*$.
\end{theorem}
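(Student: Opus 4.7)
The plan is to derive the claimed error recursion by Taylor-expanding the block-wise optimality conditions of alternating optimization, and then to establish the spectral bound $\gr(\m I - \m M^{-1}\m H) < 1$ as an instance of the classical Ostrowski--Reich (Gauss--Seidel) convergence theorem for symmetric positive-definite matrices.

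First I would write down the stationarity condition defining each sub-iterate. Let $g_i(\v x) = \nabla_{\v x_i} f(\v x)$, so the update of block $i$ satisfies
\[
g_i\big(\v x_1^{(t+1)}, \ldots, \v x_{i-1}^{(t+1)}, \v x_i^{(t+1)}, \v x_{i+1}^{(t)}, \ldots, \v x_n^{(t)}\big) = \v 0.
\]
Since $g_i(\v x^*) = \v 0$ and $f$ is $C^3$, Taylor-expanding in the block errors $\v e_j^{(\cdot)} = \v x_j^{(\cdot)} - \v x_j^*$ yields
\[
\sum\nolimits_{j \leq i} \m H_{i,j}\, \v e_j^{(t+1)} + \sum\nolimits_{j > i} \m H_{i,j}\, \v e_j^{(t)} = \mathcal{O}\big(\|\v e^{(t)}\|^2 + \|\v e^{(t+1)}\|^2\big),
\]
with Hessian blocks $\m H_{i,j}$ evaluated at $\v x^*$. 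Stacking over $i = 1, \ldots, n$ and writing $\m H = \m M + \m U$ for the block lower-triangular part (including the block diagonal) and the strictly block upper-triangular part, these equations become $\m M \v e^{(t+1)} + \m U \v e^{(t)} = \mathcal{O}(\|\v e\|^2)$. Substituting $\m U = \m H - \m M$ and solving for $\v e^{(t+1)}$ gives the stated recursion. Well-posedness of each sub-update for sufficiently small errors follows from the implicit function theorem, since positive-definiteness of $\m H$ forces each diagonal block $\m H_{i,i}$ to be invertible; this also justifies absorbing the mixed $\|\v e^{(t+1)}\|^2$ terms into $\mathcal{O}(\|\v e^{(t)}\|^2)$ by a bootstrap.

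For the spectral bound, I would argue as follows. If $(\gl, \v v)$ is an eigenpair of $\m I - \m M^{-1} \m H$ then $\m U \v v = -\gl\, \m M \v v$, and using the identity $\m M + \m M^\top = \m H + \m D$, where $\m D$ denotes the block-diagonal of $\m H$, a short energy manipulation of $\v v^{\top} \m H \v v > 0$ and $\v v^{\top} \m D \v v > 0$ yields $|\gl|^2 < 1$. This is precisely the Ostrowski--Reich theorem specialized to SPD $\m H$. Combined with the recursion above, a standard contraction argument on a sufficiently small neighborhood of $\v x^*$ produces the claimed local linear convergence.

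The main obstacle is not conceptual difficulty but bookkeeping: one must verify that the implicit function theorem applies uniformly in a neighborhood of $\v x^*$ (so that each $\v x_i^{(t+1)}$ exists and depends smoothly on $\v x^{(t)}$), and that the quadratic remainders generated across the $n$ inner sub-updates compose into a single clean $\mathcal{O}(\|\v e^{(t)}\|^2)$ term on the right-hand side of \eqref{eq:errit}. Both are routine given the $C^3$ smoothness of $f$ and the positive-definiteness of $\m H$ at $\v x^*$.
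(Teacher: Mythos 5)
The paper does not prove this statement itself---it imports it verbatim from Bezdek--Hathaway \cite[Thm.~2, Lem.~1--2]{bezdek2003convergence}---and your argument is correct and is essentially the standard proof given there: Taylor-expand the block stationarity conditions at $\v x^*$ to identify the linearized AO map with the block Gauss--Seidel iteration matrix $-\m M^{-1}\m U$ for $\m H$, then invoke the Ostrowski--Reich energy argument (via $\m M + \m M^{\top} = \m H + \m D$ with $\m D \succ 0$) to get $\gr(\m I - \m M^{-1}\m H) < 1$. The only cosmetic caveats are that the eigenvector manipulation should use conjugate transposes (eigenpairs may be complex) and that $\gl = 1$ must be excluded via nonsingularity of $\m H$; both are routine.
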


In our problem the cost function $f = f^{[d]}$ \eqref{eq:costfn_gen} is a polynomial of degree $2d$, so  has enough regularity.
The main thing we need to verify is the positive definiteness of the Hessian.
We give two preparatory lemmas before proving Theorem~\ref{thm:alsconv}.
\begin{lemma} \label{lem:positiveH}
    Let $f$ be a polynomial map between Euclidean spaces. Consider the problem
    \begin{equation} \label{eq:optprob}
        \min_{\v x} \, g(\v x; \v y) := \frac{1}{2}\norm{f(\v x) - \v y}^2.
    \end{equation}
    Suppose that $f$ is generically finite-to-one over the complex field, i.e. $\{\text{complex } {\v z} : f({\v z}) = f(\v x) \}$ are finite sets for Zariski-generic complex $\v x$.
    Then the Hessian over the reals $\m H_g(\v x; f(\v x)) \succ 0$ for Zariski-generic real $\v x$.
\end{lemma}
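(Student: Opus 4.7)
The plan is to compute $\m H_g$ directly, observe that the nuisance term disappears at $\v y = f(\v x)$, and then reduce the positive-definiteness claim to a generic-rank statement about the Jacobian $\m J_f$, which will follow from the finite-to-one hypothesis via the fiber-dimension theorem.

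First I would differentiate $g(\cdot;\v y)$ twice. Since $g$ is a squared $\ell_2$-norm, the gradient is $\nabla_{\v x} g = \m J_f(\v x)^\top(f(\v x)-\v y)$, and one further differentiation yields
\[
\m H_g(\v x;\v y) = \m J_f(\v x)^\top \m J_f(\v x) + \sum_i \big{(}f_i(\v x)-y_i\big{)}\,\nabla^2 f_i(\v x).
\]
Substituting $\v y = f(\v x)$ zeros out the second sum, leaving $\m H_g(\v x; f(\v x)) = \m J_f(\v x)^\top \m J_f(\v x)$, which is automatically positive semidefinite. Strict positive-definiteness is therefore equivalent to $\m J_f(\v x)$ having full column rank, and this is the only thing I need to establish.

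Second, I would use the finite-to-one hypothesis to show full column rank of $\m J_f$ at a Zariski-generic point. Let $N$ be the dimension of the domain and let $Y \subset \mathbb{C}^M$ denote the Zariski closure of the image of the complex extension $f:\mathbb{C}^N \to \mathbb{C}^M$. Regarded as a dominant morphism $f: \mathbb{C}^N \to Y$, its generic fiber has dimension $N-\dim Y$ by the fiber-dimension theorem \cite{harris2013algebraic}. The finiteness assumption forces $\dim Y = N$, so the generic rank of $\m J_f$ over $\mathbb{C}$ equals $N$.

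Third, I would transfer this generic-rank statement from $\mathbb{C}^N$ to $\mathbb{R}^N$. The set where $\m J_f$ has rank $< N$ is the common vanishing locus of the $N \times N$ minors of $\m J_f$, each a real polynomial in $\v x$. At least one of these minors is a nonzero polynomial (it does not vanish at some complex point), so its real zero set is a proper Zariski-closed subset of $\mathbb{R}^N$. Hence $\m J_f(\v x)$ has full column rank and $\m H_g(\v x; f(\v x)) \succ 0$ on a Zariski-open dense subset of $\mathbb{R}^N$, which is precisely the conclusion.

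There is no significant obstacle; the only delicate moment is the complex-to-real transfer, but this is immediate because a nonzero polynomial with real coefficients cannot vanish identically on $\mathbb{R}^N$.
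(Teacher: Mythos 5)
Your proposal is correct and follows essentially the same route as the paper: compute $\m H_g = \m J_f^\top \m J_f + \t H_f \cdot (f(\v x)-\v y)$, observe the second term vanishes at $\v y = f(\v x)$, and reduce to generic full column rank of $\m J_f$, which follows from the finite-to-one hypothesis. The only difference is that the paper cites \cite{sommese2005numerical} for that last rank statement, whereas you prove it yourself via the fiber-dimension theorem and the complex-to-real transfer; both are fine.
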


\begin{proof}
    Denote $\m J_{f}$, $\m J_{g}$, $\t H_{f}$, $\m H_{g}$ the gradients and Hessians of $f$ and $g$ with respect to $\v x$ respectively.
    Note that $\t H_f$ is a third-order tensor.
    Direct computation shows 
    \begin{gather}
        \v J_g(\v x; \v y) = \v J_f(\v x)^\top (f(\v x) - \v y), \label{eq:Jg}\\
        \m H_g(\v x; \v y) = \v J_f(\v x)^\top \v J_f(\v x) + \t H_f(\v x) \cdot (f(\v x) - \v y). \label{eq:Hg}
    \end{gather}
    Since $f$ is generically finite-to-one, $\v J_f$ is full column rank at generic points by polynomiality \cite{sommese2005numerical}.
    At $\v y = f(\v x)$, the residual $f(\v x) - \v y$ is zero, thus $\m H_g(\v x; f(\v x)) \succ 0$.
\end{proof}

\begin{lemma} \label{lem:ctglbmin}
    Under the setup of \cref{lem:positiveH}, assume in addition that $f$ is generically one-to-one over the complex field, i.e. 
    $\{\text{complex } {\v z} : f({\v z}) = f(\v x) \} = \{ \v x \}$ for Zariski-generic complex $\v x$.
    Then for Zariski-generic real $\v x^*$,
    there exists a real Euclidean neighborhood $U$ of $\v y^* = f(\v x^*)$ such that the optimization problem \eqref{eq:optprob} has a unique solution for all $\v y \in U$, 
    and the function $h$ mapping any $\v y \in U$ to the unique solution is smooth.
\end{lemma}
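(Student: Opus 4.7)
The plan is to derive the lemma from the implicit function theorem applied to the first-order optimality system $\nabla_{\v x} g(\v x; \v y) = \v 0$, using positive definiteness of the Hessian from \cref{lem:positiveH} as the non-degeneracy input, and then to upgrade the local uniqueness to global uniqueness using the generically one-to-one hypothesis.

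First, I would note that the map $F(\v x, \v y) := \nabla_{\v x} g(\v x; \v y) = \m J_f(\v x)^\top (f(\v x) - \v y)$ is smooth (polynomial) in $(\v x, \v y)$, and that $F(\v x^*, \v y^*) = \v 0$ since $f(\v x^*) = \v y^*$. The Jacobian $\partial F / \partial \v x$ at $(\v x^*, \v y^*)$ equals the Hessian $\m H_g(\v x^*; \v y^*)$, which is positive definite for generic $\v x^*$ by \cref{lem:positiveH}. Hence the IFT yields open neighborhoods $V \ni \v x^*$ and $U \ni \v y^*$ and a smooth map $h : U \to V$ with $h(\v y^*) = \v x^*$ such that for every $\v y \in U$, $h(\v y)$ is the unique point in $V$ satisfying $\nabla_{\v x} g(\cdot; \v y) = \v 0$. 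Continuity of the Hessian lets us shrink $U$ so that $\m H_g(h(\v y); \v y) \succ 0$ throughout $U$, so $h(\v y)$ is a strict local minimizer.

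Second, to promote $h(\v y)$ to the unique global minimizer, I would use the generically one-to-one hypothesis together with a compactness argument. At $\v y = \v y^*$, the value $g(\v x^*; \v y^*) = 0$ is the global minimum, and the hypothesis forces $f^{-1}(\v y^*) = \{\v x^*\}$, so any other $\v x$ gives $g(\v x; \v y^*) > 0$. Now suppose for contradiction there were sequences $\v y_k \to \v y^*$ and $\v x_k \notin V$ with $g(\v x_k; \v y_k) \leq g(h(\v y_k); \v y_k) \to 0$. Then $f(\v x_k) \to \v y^*$, and after passing to a convergent subsequence (justified by properness of $f$ on the bounded region where $\v x$ lives in our setting, or equivalently by restricting the optimization to the compact parameter region dictated by the application) we would get a limit $\v x_\infty$ with $f(\v x_\infty) = \v y^*$, whence $\v x_\infty = \v x^*$, contradicting $\v x_k \notin V$ for large $k$.

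The IFT step is entirely routine. The main obstacle is precisely the passage from local to global uniqueness, which the IFT alone does not supply: one must exclude far-away minimizers as $\v y$ perturbs off $\v y^*$. In the ambient setting of the paper this is not a serious issue because the parameter $\v x = (\v w, \m A)$ is confined to a bounded region by the simplex constraint on the weights and the preprocessing of the data (centering and rescaling), which furnishes the compactness needed to complete the argument.
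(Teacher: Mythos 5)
Your local argument is a valid alternative to the paper's. You apply the implicit function theorem to the stationarity system $\nabla_{\v x} g(\v x;\v y)=\v 0$, using the positive-definite Hessian from \cref{lem:positiveH} as the nondegeneracy input; the paper instead observes that for generic $\v x^*$ the map $f$ restricts to a diffeomorphism from a neighborhood $E$ of $\v x^*$ onto a smooth manifold $\cM=f(E)$, invokes smoothness of the nearest-point projection $P_{\cM}$ onto such a manifold (the Absil--Malick projection lemma), and sets $h=f^{-1}\circ P_{\cM}$. Both routes deliver the same local conclusion: yours produces $h$ directly as a smooth branch of critical points (and strict local minimizers, after shrinking $U$), while the paper's makes the geometric meaning of $h(\v y)$ explicit as the preimage of the closest point of the image manifold.

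Where you go beyond the paper is the passage from local to global uniqueness, which the paper dispatches with ``the desired uniqueness now follows.'' You are right that the IFT alone does not exclude far-away minimizers, but the compactness you invoke to close this gap is not actually available as stated: in \eqref{eq:optprob} the variable $\m A$ is unconstrained, and the cost is not coercive in $(\v w,\m A)$ because of the scaling ambiguity \eqref{eq:ambig} --- for instance, sending $w_j\to 0$ while $\v a_j = c\,w_j^{-1/d}\v u$ keeps the term $w_j\v a_j^{\otimes d}$ fixed as $\|\v a_j\|\to\infty$, so sublevel sets of $g(\cdot\,;\v y)$ are unbounded and a near-minimizing sequence need not subconverge. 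Your contradiction argument therefore needs an additional ingredient (properness of $f$ onto its image near $\v y^*$, or a quantitative/stable version of the identifiability theorem), not merely boundedness of the preprocessed data. To be fair, the paper's own proof is equally silent on this point, so this is a shared gap rather than a defect specific to your write-up; but the claim that compactness follows from ``the simplex constraint and the preprocessing'' should not be presented as a complete justification.
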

\begin{proof}
    For generic $\v x^*$ there is a Euclidean neighborhood $E$ of $\v x^*$ such that
    $\cM = f(E)$ is a smooth manifold; $\{ {\v x} : f({\v x}) = \v y \}$ is a singleton for all $\v y \in \mathcal{M}$; and $f^{-1} : f(E) \rightarrow E$ is smooth (see e.g.  \cite[Thm. A.4.14]{sommese2005numerical}).
    Then \cite[Lem. 3.1]{absil2012projection} implies there is a uniquely-defined  projection $P_{\cM}$ (with respect to $\ell_2$) mapping a Euclidean neighborhood $U$ of $\v y^*$ into $\cM$, which is smooth.
    The desired uniqueness now follows, and 
    $h = f^{-1} \circ P_{\cM}$ is well-defined and smooth from $U$ to $E$ mapping $\v y$ in $U$ to the unique minimizer.
\end{proof}

\begin{proof}(\textit{of \cref{thm:alsconv}}).
    Firstly, since we have an ALS algorithm, the cost function is non-increasing after each update.
    Now we prove the assertions 1 to 3.

    For assertion 1, the goal is to apply \cref{lem:alsconv} to \cref{alg:basicALS} with the weight constraints imposed.
    Notice that to prove local convergence, we may ignore the inequality constraint $\v w > \v 0$ and only work with the sum-to-one constraint.
    Indeed, if local linear convergence holds under the sum-to-one constraint,
    we can start at $\v w_0 > \v 0$ close enough to $\v w_c$ such that the inequality constraint $\v w > \v 0$ is never violated under linear convergence (since $\v w_c$ is in the interior of $\gD_{r-1}$).
    This then establishes local linear convergence with the inequality constraint.
    To deal with the sum-to-one constraint, let $\v w^\circ = (w_1,\ldots,w_{r-1}) \in \mathbb{R}^{r-1}$ be a representation of the weights $\v w\in \gD_{r-1}$ using $w_r = 1 - \sum_{i < r}w_i$.
    Then positive definiteness of the Hessian at $(\v w_c, \m A_c)$ implies the Hessian with respect to $(\v w_c^\circ, \m A_c)$ is also positive definite.
    Therefore, by \cref{lem:alsconv}, local linear convergence follows.

    Now we prove assertion 2.
    Since $\v w^\circ$ is an one-to-one representation of $\v w$,
    according to \cref{thm:wMid}
    the map $f: (\v w^\circ, \m A) \mapsto (\P(\sum_j w_j \v a_j),\ldots,\P(\sum_j w_j \tprod{\v a_j}{d}))$ is still a generically one-to-one polynomial map, and there is a unique preimage $(\v {w^*}^\circ, \m A^*)$ of the exact moments $(\P(\t M^1),\ldots,\P(\t M^d))$.
    Define
    \begin{equation*}
        \tilde{f}^{[d]}(\v w^\circ, \m A ; \set{\t T^i}) := \sum\nolimits_{i = 1}^d \gt_i \big{\|} \P\big{(}\sum\nolimits_{j = 1}^r w_j \v a_j^{\otimes i} - \t T^i\big{)}\big{\|}^2,
    \end{equation*}
    Then by \cref{lem:positiveH}, the Hessian of $\tilde{f}^{[d]}$ with respect to $(\v w^\circ, \m A)$ is positive definite at the exact solution $({\v w^*}^\circ, \m A^*; \set{\t M^i})$.
    By \cref{lem:ctglbmin}, there are open neighborhoods $\cO_i$ for each $\t M^i$, so that when $\t M^i$ is replaced with $\widehat{\t M}_p^i$ within $\cO_i$, there is again a unique solution $(\v w_p^\circ, \m A_p)$ that minimizes $\tilde{f}^{[d]}$. This proves the first part of assertion 2.
    By \cref{lem:ctglbmin}, the function
    \begin{equation*}
        h: (\widehat{\t M}_p^1,\ldots, \widehat{\t M}_p^d)\mapsto (\v w_p^\circ, \m A_p)
    \end{equation*}
    is smooth around the exact moments.
    Note that $({\v w^*}^\circ, \m A^*) = h(\t M^1,\ldots, \t M^d)$ and at the exact solution  $({\v w^*}^\circ, \m A^*, \set{\t M^i})$ the Hessian of $\tilde{f}^{[d]}$ is shown to be positive definite.
    Since the Hessian depends continuously on the inputs $(\v w, \m A, \set{\t T^i})$, it follows that we can choose $\cO_i$ appropriately so that the Hessian is positive definite at $(\v w_p, \m A_p)$.
    This completes the proof for assertion 2.
    
    Finally by the law of large number $\widehat{\t M}_p^i\rightarrow\t M^i$ almost surely, and thus by continuity of $h$ we have $(\v w_p, \m A_p) \rightarrow (\v w^*, \m A^*)$ almost surely, establishing assertion 3.
\end{proof}

Next we quantify the convergence rates for the ALS algorithm.

\begin{proof}(\textit{of \cref{prop:linconv}})
    Let $\v e$ be the error in the weights and $\m E$ be the error in the mean matrix.
    As $\ind{}^{\top}\v e = 0$, we can write $\v e = \m Q\v \gee$ where the columns of $\m Q \in \R^{r\times(r-1)}$ are an orthonormal basis for the subspace orthogonal to $\ind{}$. 

    We define the reparametrized function
    \begin{equation*}
        \theta(\v \gee, \m E) := f^{[d]}(\v w^*_p + \m Q\v \gee, \m A^*_p + \m E; \m V_p).
    \end{equation*}
    Since $f^{[d]}$ is a polynomial with sufficient regularity, so is $\theta$. 
    By \cref{lem:alsconv}, the error iterates by applying a matrix  characterized by the Hessian of $\theta$ at $0$.
    We augment $\m Q$ to $\widehat{\m Q} = \Diag{\m Q, \m I_{nr}}$,
    and have
    \begin{equation*}
        \m H_{\theta}(0, 0) = \widehat{\m Q}^\top \m H_{f^{[d]}}(\v w^*_p, \m A^*_p; \m V_p) \widehat{\m Q} = \m H.
    \end{equation*}
    Thus by \cref{lem:alsconv}, the convergence rate comes from the spectral radius
    \begin{equation*}
        \gr(\m I - \m M^{-1} \m H) < 1,
    \end{equation*}
    where $\m M$ is the block lower triangular part (partitioned by $(\v w, \v a^1,\ldots,\v a^n))$ of $\m H$.
    The formula for $\m H_{f^{[d]}}$ is shown after the proof of \cref{prop:pconv} below.
\end{proof}

\begin{proof} (\textit{of \cref{prop:pconv}})
    We rewrite least squares problem as an instance of \eqref{eq:optprob} by defining   
    \begin{equation*}
       f(\v w, \m A) = \begin{pmatrix}
            \sqrt{\gt_1} \, T^1(\v w, \m A)\\
            \vdots\\
            \sqrt{\gt_d} \, T^d(\v w, \m A)
        \end{pmatrix},
        \ \ \ \ \ \
        \v y^* = \begin{pmatrix}
            \sqrt{\gt_1} \, \P(\t M^1)\\
            \vdots\\
            \sqrt{\gt_d} \, \P(\t M^d)
        \end{pmatrix}.
    \end{equation*}
    Now by the identifiability, the optimization problem
    \begin{equation*}
        \argmin_{\v w, \m A} g(\v w, \m A; \v y^*) 
        =
        \argmin_{\v w, \m A} \norm{f(\v w, \m A) - \v y^*}^2
    \end{equation*}
    admits a unique minimizer $(\v w^*, \m A^*)$.
    Denote
    \begin{equation*}
        \v y_p = 
        \begin{pmatrix}
            \sqrt{\gt_1} \P(\widehat{\t M}_p^1)
            \\
            \vdots\\
            \sqrt{\gt_d} \P(\widehat{\t M}_p^d)
        \end{pmatrix}.
    \end{equation*}
    As $p\rightarrow\infty$, we have $\v y_p \rightarrow \v y^*$ almost surely, and by \cref{lem:ctglbmin},
    for all $\gee_p$ sufficiently small there is a smooth function $h$ mapping $\v y_p$ to the unique global minimizer $(\v w_p, \m A_p)$.
    With $\gee_p = \| \v y_p - \v y^*\|$,

    \begin{equation*}
        \norm{h(\v y_p) - h(\v y^*)} 
        \leq
        \norm{\m J_h(\v y^*)}_{\text{op}} \cdot \gee_p + C \gee_p^2,
    \end{equation*}
    where $\m J_h$ is the Jacobian of $h$ and $C$ is a constant depending on the Hessian of $h$ at $\v y^*$. 
    Since the Hessian $\partial^2_{(\v w, \m A)} g(\v w, \m A; \v y) \succ 0$ at $(\v w^*, \m A^*, \v y^*)$, by the implicit function theorem the function $h$ is identified with the implicitly-defined function mapping $\v y$ around $\v y^*$ to $(\v w, \m A)$ such that $\partial_{(\v w, \m A)} g(\v w, \m A; \v y) = 0$.
    Given this we compute the Jacobian of $h$ using equations \eqref{eq:Jg} and \eqref{eq:Hg}:
    \begin{align*}
        \m J_h(\v y^*)
        &=
        -\left[ \partial_{(\v w, \m A)}^2 g(\v w, \m A; \v y) \right]^{-1} 
        \left(\partial_{\v y}\partial_{(\v w, \m A)}g(\v w, \m A; \v y)\right)
        \bigg\rvert_{\v w^*, \m A^*, \v y^*} 
        \\
        &=
        \left[\m J_f(\v w^*, \m A^*)^\top \m J_f(\v w^*, \m A^*)\right]^{-1}
        \m J_f(\v w^*, \m A^*)^\top,
    \end{align*}
    which is the pseudo-inverse of $\m J_f(\v w^*, \m A^*)$.
    Recall that $\m J_i$ is the Jacobian of $T^i$ at $(\v w^*, \m A^*)$. By definition of $f$,
    \begin{equation*}
        \m J_f(\v w^*, \m A^*)^\top \m J_f(\v w^*, \m A^*)
        =
        \sum_{i = 1}^d \gt_i \m J_i^\top \m J_i.
    \end{equation*}
    Since generically $\m J_f$ is full rank, $\gs = \gs_{\min}\left(\sum_{i = 1}^d \gt_i \m J_i^\top \m J_i\right) > 0$ and $\norm{\m J_h(\v y^*)}_{\text{op}} = \gs^{-1/2}$. This completes the proof.
    Together with the formulas for $\m H_{f^{[d]}}$, we give the formulas for $\m J_i^\top \m J_i$ after this proof.
\end{proof}

Here are the formulas mentioned in the last sentences of \cref{prop:linconv} and \cref{prop:pconv}.
Since $f^{[d]} = \sum_{i = 1}^d \gt_i f^{(i)}$, it suffices to list formulas for $\m H = \m H_{f^{(d)}}$ and $\m M = \m J_d^\top \m J_d$.
We denote $\v e_i$ the $i$th standard basis vector and denote the entries
\begin{gather*}
    M_{ij, kl} = \ip{\pd{T^d}{(\v a_i)_j}}{\pd{T^d}{(\v a_k)_l}},\ \ \ 
    M_{i, jk} = \ip{\pd{T^d}{w_i}}{\pd{T^d}{(\v a_j)_k}},\ \ \ 
    M_{i, j} = \ip{\pd{T^d}{w_i}}{\pd{T^d}{w_j}},\\\
    H_{ij, kl} = \frac{\partial^2 f^{(d)}}{\partial (\v a_i)_j \partial (\v a_k)_l},\ \ \ 
    H_{i, jk} = \frac{\partial^2 f^{(d)}}{\partial w_i \partial (\v a_j)_k},\ \ \ 
    H_{i, j} = \frac{\partial^2 f^{(d)}}{\partial w_i \partial w_j}.
\end{gather*}
We give formulas for these entries evaluated at general $(\v w, \m A; \m V)$, where \text{sym} denotes the symmetrization operator:
\begin{gather*}
    M_{ij, kl} = \left\langle d w_i \P \, \text{sym} (\v a_i^{\otimes d-1}\otimes \v e_j),~ d w_k \P \, \text{sym} (\v a_k^{\otimes d-1}\otimes \v e_l) \right\rangle;\\
    M_{i, jk} = \left\langle \P \left(\v a_i^{\otimes d}\right),~ d w_k \P \, \text{sym} (\v a_k^{\otimes d-1}\otimes \v e_l) \right\rangle;\\
    M_{i, j} = \left\langle \P \, (\v a_i^{\otimes d}),~ \P \,  (\v a_j^{\otimes d}) \right\rangle.
\end{gather*}
Now denote the residual tensor as $\t R = \P\big{(}\sum\nolimits_{j=1}^r w_i \v a_i^{\otimes d} - \frac{1}{p}\sum\nolimits_{\ell=1}^p \v v_{\ell}^{\otimes d}\big{)}$. Then
\begin{gather*}
    H_{ij, kl} = M_{ij, kl} + \delta_{ik} w_k d(d-1) \left\langle \v a_i^{\otimes d-2}\otimes \v e_j \otimes \v e_l,~\t R \right\rangle;\\
    H_{i, jk} = M_{i, jk} + \delta_{ij} d \left\langle \v a_j^{\otimes d-1} \otimes \v e_k,~ \t R \right\rangle;\\
    H_{i, j} = M_{i, j}.
\end{gather*}

\section{Further Algorithm Details for \cref{sec:improve}}\label{apx:practice}

\subsection{Anderson Acceleration} \label{apx:anderson}

The baseline ALS \cref{alg:basicALS} speeds up if we use Anderson Acceleration (AA) \cite{walker2011anderson} during its terminal convergence.
This works similarly to the application of AA to the CP decomposition in \cite{sterck2012nonlinear}.  

We start AA by applying one ALS update to the current iterate $({\v w}_t, {\m A}_t)$ to get $({\v{\hat w}}, {\m{\hat A}})$.  
Suppose  we have the history $({\v{w}}_{t-k}, {\m{A}}_{t-k}), \ldots,$ $({\v{w}}_{t}, {\m{A}}_{t})$.
For $i < t$ define $\gD \v w_{i} = \v w_{i+1} - \v w_{i}$, $\gD \m A_i = \v A_{i+1} - \v A_{i}$ and $\gD (\nabla f^{[d]})_i = \nabla f^{[d]}(\v{w}_{i+1}, \m{A}_{i+1}) - \nabla f^{[d]}(\v{w}_{i}, \m{A}_{i})$, and for $i = t$ define $\gD \v w_{t+1} = {\v{\hat w}} - \v w_{t}$ etc.  
The gradients $\nabla f^{[d]}$ are evaluated implicitly (see \cref{apx:deriv}).
A linear least squares is then solved:

\begin{equation}\label{eq:AA}
\v c^*  = \, \argmin_{{\v c} \in \mathbb{R}^{k+1}} \| \nabla f^{[d]}(\v{\hat{w}}, \m{\hat{A}}) - \sum\nolimits_{i=0}^k c_{t-i} \gD(\nabla f^{[d]})_{t-i}\|^2.
\end{equation}

\vspace{-0.5em}

\noindent We propose $(\v{\tilde{w}}, \m{\tilde{A}}) 
=
(\v{\hat{w}}, {\m{\hat{A}}}) - \sum\nolimits_{i = 0}^k c_{t-i}^* (\gD \v w_{t-i}, \gD \m A_{t-i})$ as a possible next step.
As suggested by \cite{sterck2012nonlinear},
it is checked if $(\v{\tilde{w}}, \m{\tilde{A}}) - (\v{\hat{w}}, \m{\hat{A}})$ and $-\nabla f^{[d]}(\v{\hat{w}}, \m{\hat{A}})$ are sufficiently aligned. 
If so, we do a line search on $f^{[d]}$ from $(\v{\hat{w}}, \m{\hat{A}})$ towards $(\v{\tilde{w}}, \m{\tilde{A}})$ to obtain $(\v{w}_{t+1}, \m{A}_{t+1})$, with the constraint that $\v{w}_{t+1}$ stays in $\gD_{r-1}$.  
Otherwise, we put $(\v{w}_{t+1}, \m{A}_{t+1}) = (\v{\hat{w}}, \m{\hat{A}})$ and restart AA (i.e. clear the history).
See \cref{alg:aa}.

\begin{algorithm}
    \begin{algorithmic}[1]
        \caption{Anderson step for mean matrix $\m A$}
            \label{alg:aa}
            \Function{AndersonStep}{}
            \State Compute ALS update $(\v{\hat{w}}, \m{\hat{A}})$ from current state $(\v w_t, \m A_t)$
            \State Compute gradient $(d\v w, d\m A) =  \nabla f^{[d]}(\v{\hat{w}}, \m{\hat{A}})$ using \cref{apx:deriv}
            \State Solve  \eqref{eq:AA} for $\v c^*$ and compute $(\v{\tilde{w}}, \m{\tilde{A}})$ 
            \State Set search direction $\v u \gets (\v{\tilde{w}}, \m{\tilde{A}}) - (\v{\hat{w}}, \m{\hat{A}})$
            \If{the inner product (after normalization) between $\v u$ and $-(d\v w, d\m A)$ exceeds a user-specified constant $\eps_{AA}$}
                \State Do a line search in $\v u$ direction, and \textbf{return}  mean matrix from search 
            \Else
                \State Use $\m{\hat{A}}$ as the next mean and restart AA
            \EndIf
            \EndFunction
    \end{algorithmic}
\end{algorithm}

\subsection{A Drop-One Procedure} \label{apx:dropone}
We suggest a procedure similar in spirit to dropout from machine learning \cite{hinton2012improving} to aid the nonconvex optimization.
Due to the identifiability guarantee \cref{thm:wMid}, our cost function
\begin{equation*}
    f^{[d]} = \gt_1 f^{(1)} + \ldots + \gt_d f^{(d)}
\end{equation*}
will have the same global minima for almost all $\v \gt$ hyperparameters, when $d$ is large enough and $p$ goes to infinity.  
Thus, we use different choices of $\v \gt$ to add randomness to the descent to avoid attraction to bad local minima.
Specifically we zero out a single $\gt_i$ before each sweep of the mean matrix early on in the descent.  
  We compute
\begin{equation} \label{eq:i-to-drop}
i^* = \argmax_{i \in [d]} \, \big{\|} \! \sum\nolimits_{j \neq i} \gt_j \nabla f^{(j)} \big{\|}
\end{equation}
at the current iterate $({\v w}, {\m A})$, then set $\gt_{i^*} = 0$ and do one ALS sweep of $\m A$.  
We restore $\gt_{i^*}$ to its original value, update $\v w$, and repeat.  
See \cref{alg:drop1}.

In experiments we find that the drop-one procedure indeed discourages convergence to bad local minima.
However when we are close to the global minimum, it increases the number of iterations needed to converge, since it disregards the $i^*$th moment.  
Therefore, we only activate \cref{alg:drop1}  during a warm-up stage.

\begin{algorithm}
    \begin{algorithmic}[1]
        \caption{Drop-one procedure}
            \label{alg:drop1}
            \Function{DropOne}{}
            \State $\v J_s \gets \gt_s \nabla f^{(s)}$ for $s = 1,\ldots,d$
            \State $g \gets \big{\|}\sum_{s \in [d]} \v J_s \big{\|}$
            \State $g^{(i)} \gets \big{\|}\sum_{s \in [d], s \neq i} \v J_s\big{\|}$ for $i = 1,\ldots,d$
            \State $i^* \gets \argmax_{i \in [d]} g^{(i)}$
            \If{$g^{(i^*)} > g$}
                \State In the next iteration, set $\gt_{i^*} = 0$
            \EndIf
            \EndFunction
    \end{algorithmic}
\end{algorithm}

\subsection{Blocked ALS Sweep} \label{apx:blockALS}

We suggest using a blocked ALS sweep on the mean matrix $\m A$.  
Instead of updating one row of $\m A$ per ALS solve as in \cref{alg:Mstep}, we simultaneously update a block $\mathcal{B} \subset [n]$ of $m$ rows per solve, with rows  $[n]\setminus \mathcal{B}$ held fixed.  
We call this \textsc{UpdateMeanBlock}.
It replaces the row $(\v a^k)^{\top}$ in \textsc{UpdateMean} (\cref{alg:Mstep}) by the submatrix $\m A^{\mathcal{B}}$. 
We prepare the normal equations for multiple row solves at once through a blocked variant of \cref{alg:prepnormeqn}.  
We give the details in \cref{alg:block} below, where $\v \pi$ is a matrix, with each column representing a scaling vector $\v v^k/p$, $k \in \cB$,
and $\v r$ is also a matrix, with each column representing the corresponding right-hand side for a row.
The coefficients $\m L$ are shared across the linear systems and only prepared once.

\begin{algorithm}
    \begin{algorithmic}[1]
        \caption{Blocked ALS update on mean matrix $\m A$}
            \label{alg:block}
            \Function{UpdateMeanBlock}{$\{\m G^{A, A}_s\}_{s = 1}^d$, $\{\m G^{A, V}_s\}_{s = 1}^d$, rows to update $\mathcal{B}$, order $d$, hyperparameters $\v \gt$}

            \State $\v \mu \gets \frac{1}{p}\sum_\ell \v v_\ell$
            \State $\v \mu_{\mathcal{B}} \gets \v \mu(\mathcal{B})$, $\m A^{\mathcal{B}} \gets \m A(\mathcal{B}, :)$, $\m V^{\mathcal{B}} \gets \m V(\mathcal{B}, :)$
            \For{$s = 1, ..., d-1$}
                \Comment \textbf{block deflation}
                \State 
                $\m G^{A, A}_s \gets \m G^{A, A}_s - {(}(\m A^{\mathcal{B}})^{*s}{)}^{\! \top} ((\m A^{\mathcal{B}})^{*s})$
                \State 
                $\m G^{A, V}_s \gets \m G^{A, V}_s - ((\m A^{\mathcal{B}})^{*s})^\top((\m V^{\mathcal{B}})^{*s})$
            \EndFor
            \State $\m L, \v r \gets {\small\prepnormeqn(\{\m G^{A, A}_s\}_{s = 1}^d,\{\m G^{A, V}_s\}_{s = 1}^d,(\m V^{\mathcal{B}}/p)^{\! \top},d-1,(2\gt_2 \ldots, d\gt_d))}$
            \State $\m L \gets \m L + \gt_1$
            \Comment \textbf{add  first order cost}
            \State $\v r \gets \v r + \gt_1 \cdot \ind{}\v\mu_{\mathcal{B}}^\top$
            \State Solve for new block $\m A^{\mathcal{B}} \gets \left((\m L^{-1} \v r) / \v w\right)^\top$\!, $\m A(\mathcal{B},:) \gets \m A^{\mathcal{B}}$
            \For{$s = 1, ..., d-1$}
                \Comment \textbf{update $G$ matrices}
                \State 
                $\m G^{A, A}_s \gets \m G^{A, A}_s + ((\m A^{\mathcal{B}})^{*s})^\top((\m A^{\mathcal{B}})^{*s})$
                \State 
                $\m G^{A, V}_s \gets \m G^{A, V}_s + ((\m A^{\mathcal{B}})^{*s})^\top((\m V^{\mathcal{B}})^{*s})$
            \EndFor
             \vspace{1ex}
            \EndFunction
        \Return $\m A, \{\m G^{A, A}_s\}_{s = 1}^d, \{\m G^{A, V}_s\}_{s = 1}^d$
    \end{algorithmic}
\end{algorithm}

We apply a random row-permutation on $\m A$ before each blocked sweep.
Compared to the row-wise ALS sweep, the blocked ALS  prepares and factorizes $\tfrac{1}{m}$ as many $\m L$ matrices during a full sweep of $\m A$, 
while keeping the other computational costs the same.  
The loop depth of the sweep is also reduced by a factor of $m$.
However the blocked update throws out some equations, e.g. when $m=2$ and the first two rows are updated, it ignores differences between entries $\widehat{\t M}^d_{12...}$ and $\t M^d_{12...}$.  
Thus, we only use it in the warm-up stage.

\subsection{Restricting the Parameters} \label{apx:restr_param}
During the warm-up stage, we constrain the parameters to avoid pathological descents.  We lower-bound the weights via
\begin{equation} \label{eq:w-lower}
    (w_i)_t \geq q_t/r\, \,\,\,\, \forall \, i\in [r],
\end{equation}
where $q_t$ is chosen to be a mild bound like $0.1$ for initial $t$ and $0$ for large $t$.
This becomes an additional inequality in the QP problem \eqref{eq:qp} when solving for the weights. 
For the mean matrix, after each blocked ALS update we project the new block $\m A^{\mathcal{B}}$ to the box circumscribing the data vectors $\m V$.

\subsection{Derivative Evaluation} \label{apx:deriv}

If the gradient is evaluated from \eqref{eq:implcost_esp} and the Newton-Gerard formula, it takes $\cO(npr)$ storage, unless a for-loop of depth $n$ is used to reduce the storage to $\cO(pr)$.
However we can compute the derivative with only $\cO(pr)$ extra storage using a for-loop of depth equal to the number of integer partitions of $d$, which is better for $d$ small.

The idea is to rewrite
\eqref{eq:esp2power} as the determinant of a lower Hessenberg matrix \cite{macdonald1998symmetric}:
\begin{equation} \label{eq:esp2det}
    e_d = \frac1{d!} 
    \begin{vmatrix}
        p_1     & 1       & 0      & \cdots & 0      & 0 \\ 
        p_2     & p_1     & 2      & 0      & \cdots & 0 \\ 
        \vdots  & \vdots  & \ddots & \ddots & \ddots & \vdots\\ 
        p_{d-2} & p_{d-3} & \cdots & p_1    & d-2    & 0   \\
        p_{d-1} & p_{d-2} & \cdots & p_2    & p_1    & d-1 \\ 
        p_d     & p_{d-1} & \cdots & p_3    & p_2    & p_1 
    \end{vmatrix}.
\end{equation}
We then expand the determinant as a sum of monomials of power sums:
\begin{equation} \label{eq:Nsformula}
    e_d = \frac{1}{d!} \sum\nolimits_{\v \gl \vdash d} N_{\v \gl} \prod\nolimits_{s \in \v \gl}p_s.
\end{equation}
From an induction on $d$, the monomials correspond to the integer partitions of $d$, and the constant $N_{\v \gl}$ come from the constants in the superdiagonal of \eqref{eq:esp2det}.
This yields another cost evaluation formula.
\begin{proposition} \label{prop:implcost}
    The cost function $f^{[d]}$ \eqref{eq:implcost_esp} can be written as
    \begin{equation} \label{eq:implcost}
        f^{[d]}(\v w, \m A; \v \pi, \m V) = \sum\nolimits_{i = 1}^d \gt_i \sum\nolimits_{\v \gl \vdash i}N_{\v \gl}\ip{\v w}{\m G_{\v \gl}^{A, A} \v w - 2 \m G_{\v \gl}^{A, V} \v \pi} + C,
    \end{equation}
    where $C$ is a constant independent of $\v w$ and $\m A$ that only depends on $\v \pi$ and $\m V$, $N_{\v \gl}$ are combinatorial constants, and $\m G_{\v \gl}^{A, A} \in \mathbb{R}^{r \times r}$ and $\m G_{\v \gl}^{A, V} \in \mathbb{R}^{r \times p}$ are given by (see \eqref{eq:defG} for definitions of $\m G_s$)
    \begin{equation*}
        \m G_{\v \gl}^{A, A} = 
            {{\mathlarger{\ast}}}_{s \in \v \gl} \m G_s^{A, A}
            , \quad \quad 
        \m G_{\v \gl}^{A, V} = {\mathlarger{\ast}}_{s \in \v \gl} 
        \m G_s^{A, V}.
    \end{equation*}
\end{proposition}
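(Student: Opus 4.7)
The plan is to start from the ESP-based expression of the cost given in \cref{prop:cost2esp} and expand each $e_i$ as a weighted sum over integer partitions of products of power sums. Each resulting monomial product can then be recognized as a $(\v w, \v \pi)$- or $(\v w, \v w)$-contraction of a Hadamard product of the Gram-like matrices $\m G^{A,A}_s, \m G^{A,V}_s$ in \eqref{eq:defP}.

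First I would unpack the Hessenberg-determinant identity \eqref{eq:esp2det}. By induction on $d$, expanding $\det(\m L_d)$ along its first column (or by Leibniz), one recovers the classical power-sum expansion
\begin{equation*}
    e_d(\v x) \, = \, \frac{1}{d!} \sum_{\v \gl \vdash d} N_{\v \gl} \prod_{s \in \v \gl} p_s(\v x),
\end{equation*}
where $N_{\v \gl} \in \mathbb{Z}$ is a combinatorial coefficient determined by the product of superdiagonal weights $1,2,\ldots,d-1$ of $\m L_d$ along the ``tracks'' of the expansion. Verifying that this collects cleanly into one term per partition is, in my view, the main technical step, since cofactor expansion naively produces redundancies that must be grouped by the multiset of indices of the power sums $p_s$.

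Second, I would substitute this expansion into the expression of \cref{prop:cost2esp}. Using the identities
\begin{equation*}
    p_s(\v a_j \ast \v a_{j'}) \, = \, \langle \v a_j^{\ast s}, \v a_{j'}^{\ast s} \rangle \, = \, (\m G^{A,A}_s)_{j j'}, \qquad p_s(\v a_j \ast \v v_{\ell}) \, = \, (\m G^{A,V}_s)_{j \ell},
\end{equation*}
which follow directly from \eqref{eq:defP}, the product over $s \in \v \gl$ is precisely the $(j,j')$- or $(j,\ell)$-entry of the Hadamard product of the corresponding Gram matrices. Hence for each $\v \gl \vdash i$,
\begin{equation*}
    \sum_{j, j'} w_j w_{j'} \prod_{s \in \v \gl} p_s(\v a_j \ast \v a_{j'}) \, = \, \v w^{\top} \m K_{\v \gl} \v w, \qquad \sum_{j, \ell} w_j \pi_{\ell} \prod_{s \in \v \gl} p_s(\v a_j \ast \v v_{\ell}) \, = \, \v w^{\top} \m H_{\v \gl} \v \pi,
\end{equation*}
and the $i!$ prefactor from \cref{prop:cost2esp} cancels the $1/i!$ in the ESP identity. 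All terms that depend only on $\v \pi$ and $\m V$ are absorbed into $C$, giving the claimed form.

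I expect the main obstacles to be combinatorial bookkeeping rather than any deep new idea. Specifically: (i) proving the power-sum expansion of $e_d$ with well-defined integer coefficients $N_{\v \gl}$ via the Hessenberg determinant; and (ii) correctly tracking the $-2$ factor on the cross term in the target expression, which originates from $\|a-b\|^2 = \|a\|^2 - 2\langle a, b\rangle + \|b\|^2$ applied to the residual defining $f^{(d)}$ in \eqref{eq:costfn_gen}, and not from the ESP algebra itself.
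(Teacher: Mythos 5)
Your proposal is correct and follows essentially the same route as the paper: substitute the power-sum expansion $e_i = \frac{1}{i!}\sum_{\v\gl\vdash i} N_{\v\gl}\prod_{s\in\v\gl}p_s$ (from the Hessenberg determinant) into the ESP form of the cost, and identify $\prod_{s\in\v\gl}p_s(\v a_j\ast\v a_{j'})$ and $\prod_{s\in\v\gl}p_s(\v a_j\ast\v v_\ell)$ with entries of the Hadamard products $\m K_{\v\gl}$ and $\m H_{\v\gl}$. Your remark (ii) about the $-2$ on the cross term coming from the squared-norm expansion rather than the ESP algebra is also a correct and worthwhile clarification.
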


\begin{proof}
    Without loss of generality, assume  $i = d$.
    Using \eqref{eq:implcost_esp} and \eqref{eq:Nsformula}, by absorbing all constants into $N_{\v \gl}$ it suffices to show that for any $\v \gl \vdash d$ that
    \begin{equation*}
        \prod_{s \in \v \gl} p_s(\v a_i * \v a_j) = \left(\m G_{\v \gl}^{A, A}\right)_{ij}, \quad \quad 
        \prod_{s \in \v \gl} p_s(\v a_i * \v v_j) = \left(\m G_{\v \gl}^{A, V}\right)_{ij}.
    \end{equation*}
    However the first equation directly from the definition of the $\m G$ matrices,
    since
    \begin{equation*}
        p_s(\v a_i * \v a_j) = \big{(}\m G_s^{A, A}\big{)}_{ij}.
    \end{equation*}
   The second equation follows similarly.
\end{proof}

The derivative with respect to the weight $\v w$ is now clear, and it is sufficient to derive formulas for $d\m G_{\v \gl}^{A, A}/d \m A$ and $d\m G_{\v \gl}^{A, V}/ d\m A$. 
The following lemma is handy.

\begin{lemma} \label{prop:aderiv}
    Let $\v \gl = (\gl_1,\ldots, \gl_{\ell})$ and $\m G_{\v \gl}^{X, Y} = \mathlarger{\ast}_{s = 1}^{\ell}\m G_{\gl_s}^{X, Y}$.  
    Let $\m \gS = \Diag{\v \gs}$ and $\m \gC = \Diag{\v \gc}$.  
    Then 
    \begin{equation*} 
        \pd{}{\m X} \big{\langle} \v \gs, \m G_{\v \gl}^{X,Y} \v \gc\big{\rangle}
        =
        \sum\nolimits_{s = 1}^{\ell} \gl_s\big{(}\eprod{\m X}{\gl_s - 1}\m \gS\big{)}
        \ast 
        \Big{[} 
            \big{(}\eprod{\m Y}{\gl_s}\m\gC\big{)}
            \cdot
            \big{(}\mathlarger{\ast}_{\substack{m = 1,\ldots, \ell\\ m\neq s}}(\eprod{\m Y}{\gl_m})^\top (\eprod{\m X}{\gl_m})\big{)}
        \Big{]}
    \end{equation*}
    and
    \begin{equation*}
        \pd{}{\m X}\big{\langle}\v \gs, \m G_{\v \gl}^{X, X} \v \gs \big{\rangle}
        =
        2\sum\nolimits_{s = 1}^{\ell} \gl_s\big{(}\eprod{\m X}{\gl_s - 1}\m \gS\big{)}
        \ast 
        \Big{[} 
            \big{(}\eprod{\m X}{\gl_s}\m\gS\big{)}
            \cdot
            \big{(}\mathlarger{\ast}_{\substack{m = 1,\ldots, \ell \\ m\neq s}}(\eprod{\m X}{\gl_m})^\top (\eprod{\m X}{\gl_m})\big{)}
        \Big{]}
    \end{equation*}
    with the conventions  $\mathlarger{\ast}_{m \in \emptyset} \, \m M = \ind{} = \eprod{\m M}{0}$.
\end{lemma}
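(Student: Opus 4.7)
The identity is a purely formal consequence of the product rule applied entry-wise, so the plan is mainly one of careful bookkeeping rather than any deep idea.

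First, I would unpack everything as scalar sums. Using $(\m G_{\gl_s}^{X,Y})_{ij} = \sum_{k} x_{ki}^{\gl_s} y_{kj}^{\gl_s}$, the scalar
\begin{equation*}
\big{\langle} \v \gs, \m G_{\v \gl}^{X,Y} \v \gc \big{\rangle} \, = \, \sum_{i,j} \gs_i \gc_j \prod_{s=1}^{\ell} \Big{(} \sum_{k_s} x_{k_s i}^{\gl_s}\, y_{k_s j}^{\gl_s} \Big{)}
\end{equation*}
is a polynomial in the entries of $\m X$. Differentiating with respect to $x_{ab}$, only the summand with $i=b$ is nonzero, and the product rule picks out, for each $s$, the factor $\gl_s x_{ab}^{\gl_s-1} y_{aj}^{\gl_s}$ times the remaining factors with $m\neq s$ untouched. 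This gives
\begin{equation*}
\pd{}{x_{ab}} \big{\langle} \v \gs, \m G_{\v \gl}^{X,Y} \v \gc \big{\rangle} \, = \, \sum_{s=1}^{\ell} \gl_s \, x_{ab}^{\gl_s-1} \gs_b \, \sum_j y_{aj}^{\gl_s} \gc_j \prod_{m\neq s} \Big{(} \sum_{k_m} x_{k_m b}^{\gl_m} y_{k_m j}^{\gl_m} \Big{)}.
\end{equation*}

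Second, I would repackage this into matrix form. Three observations suffice: (i) $x_{ab}^{\gl_s-1}\gs_b = \bigl(\eprod{\m X}{\gl_s-1}\m\gS\bigr)_{ab}$; (ii) $\sum_{k_m} x_{k_m b}^{\gl_m} y_{k_m j}^{\gl_m} = \bigl((\eprod{\m Y}{\gl_m})^\top (\eprod{\m X}{\gl_m})\bigr)_{jb}$, so the product over $m\neq s$ is the $(j,b)$-entry of the Hadamard product $\mathlarger{\ast}_{m\neq s} (\eprod{\m Y}{\gl_m})^\top(\eprod{\m X}{\gl_m})$; (iii) the remaining sum over $j$ is a matrix product, giving the $(a,b)$-entry of $\bigl(\eprod{\m Y}{\gl_s}\m\gC\bigr) \cdot \bigl[\mathlarger{\ast}_{m\neq s}(\eprod{\m Y}{\gl_m})^\top(\eprod{\m X}{\gl_m})\bigr]$. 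Multiplying entry-wise by $\bigl(\eprod{\m X}{\gl_s-1}\m\gS\bigr)$ and summing over $s$ reproduces the claimed formula.

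For the symmetric identity, the same unpacking applies but each factor $\m G_{\gl_s}^{X,X}$ now depends on $\m X$ through both $(\eprod{\m X}{\gl_s})^\top$ and $(\eprod{\m X}{\gl_s})$. The product rule then produces two terms per index $s$ which, by symmetry of the expression in the two occurrences of $\m X$ (and using that the symmetric bilinear form $\v\gs^\top(\cdot)\v\gs$ makes $\m\gS = \m\gC$), contribute identically. This yields the extra factor of $2$, with $\m\gC$ in the first formula replaced by $\m\gS$, matching the statement.

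The only thing that could go wrong is an index or factor slip — for instance, distinguishing which copy of $\m X$ contributes the $\gl_s - 1$ power versus which contributes untouched, or mistaking the order of transposes inside the Hadamard product. I would check the base case $\ell = 1$ (which recovers the elementary formula for $\partial/\partial\m X\, \langle\v\gs,(\eprod{\m X}{\gl})^\top(\eprod{\m Y}{\gl})\v\gc\rangle$) and the case $\gl_s = 1$ for all $s$ (where all the $\eprod{\cdot}{\gl_s-1}$ become $\ind{}$) as sanity checks before declaring victory.
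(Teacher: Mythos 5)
Your proof is correct and takes essentially the same route as the paper's: expand the Gram/Hadamard structure of $\m G_{\v\gl}^{X,Y}$ into explicit sums, apply the product rule over the factors indexed by $s$, and repackage into matrix form (the symmetric case picking up a factor of $2$ by the $i\leftrightarrow j$ symmetry). The paper differentiates with respect to whole columns $\v x_k$ while you work entrywise in $x_{ab}$, but the argument is the same.
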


\begin{proof}
    Note that
    \begin{align*}
        \pd{}{\v x_k}\big{\langle}\v \gs, \m G_{\v \gl}^{X, Y} \v \gc \big{\rangle}
        &=
        \pd{}{\v x_k}\sum\nolimits_{i,j}\big{\langle}\gs_i \v x_i^{\ast \gl_1} \otimes \cdots \otimes \v x_i^{\ast \gl_{\ell}}, {\gc_j {\v y_j}^{\! \ast\gl_1} \otimes \cdots \otimes \v y_j^{\ast \gl_{\ell}}} \big{\rangle}\\
        &=
        -2\gs_k\pd{}{\v x_k} 
        \sum\nolimits_j \gc_j \prod\nolimits_{m = 1}^{\ell} \big{\langle}{\v y_j}^{\ast \gl_m}, {\v x_k}^{\ast \gl_m}\big{\rangle}
    \end{align*}
    The lemma now follows from the product rule and the fact that $\m G_{\v \gl}^{X, Y} = (\m G_{\v \gl}^{Y, X})^\top$.
\end{proof}

With these formulas, we can compute the derivatives implicitly with the claimed storage.
In flops the dominating costs are matrix multiplications, and the complexity is  $O(npr)$. 
The coefficients $N_{\v \gl}$  can be precomputed;  for $d \leq 5$ they are
\begin{itemize}
    \item $d=2$: $N_{1^2} = 1$, $N_{2} = -1$;
    \item  $d=3$: $N_{1^3} = 1$, $N_{21} = -3$, $N_3 = 2$;
    \item $d=4$: $N_{1^4} = 1$, $N_{21^2} = -6$, $N_{2^2} = 3$, $N_{31} = 8$, $N_4 = -6$;
    \item  $d=5$: $N_{1^5} = 1$, $N_{21^3} = -10$, $N_{31^2} = 20$, $N_{2^21} = 15$, $N_{41} = -30$, $N_{32} = -20$, $N_5 = 24$.
\end{itemize}

\section{Hyperparameters for Numerical Experiments} \label{apx:hyper}

We list the other hyperparameters used in \cref{sec:exp} in \cref{tab:setup}.

\begin{table}[H] 
    \begin{center}
        \begin{tabular}{|l|l|}
            \hline
            Item                         & Description                                \\
            \hline
            
            Lower bound $q$ on $\v w$        & $0.1 \times 1/r$                           \\
            
            AA depth and $\eps_{AA}$     & 15 and $10^{-4}$                           \\
            Max iteration                & 200                                        \\ 
            Warm up step                 & 20                                         \\
            Block size in warm up        & 2                                          \\
            Initial weight               & vector of $1/r$                            \\ 
            Initial mean                 & standard Gaussian                          \\
            \hline
            \end{tabular}
    \caption{Hyperparameters in the numerical experiments.}
    \label{tab:setup}
    \end{center}
\end{table}

\end{document}